\newtheorem{theorem}{Theorem}[section]
\newtheorem{lemma}[theorem]{Lemma}
\newtheorem{proposition}[theorem]{Proposition}
\numberwithin{equation}{section}
\newtheorem{definition}[theorem]{Definition}
\theoremstyle{definition}
\newtheorem{remark}[theorem]{Remark}
\newcommand{\runum}[1]{\romannumeral #1}
\def\R{{\mathbb{R}}}
\def\Z{{\mathbb{Z}}}
\def\N{{\mathbb{N}}}
\def\d{{\mathrm{d}}}
\newcommand{\supp}{\operatorname{supp}}
\title{ \textbf{Notes on eventual continuity and ergodicity for SPDEs}}
\author{ \bf{ Ziyu Liu}$^*$ \footnote{$^*$email: liuziyu@math.pku.edu.cn. } \\ 
\small{\em  LMAM, School of Mathematical Science, Peking University, 100871, Beijing, China}
}}
\date{\today}
\begin{document}
	\maketitle

	\makeatother
	
	\begin{abstract} 
        These notes present an alternative approach to the asymptotic stability of stochastic partial differential equations driven by multiplicative noise, applicable to a wide range of dissipative systems. The method builds on general criteria established in \cite{GLLL2024b,L2023},  utilizing the eventual continuity and  generalized coupling techniques.        

        \end{abstract}

    \setcounter{tocdepth}{1}
    \tableofcontents
   
    \section{Introduction}\label{Sec 1}

    In these notes, we present an alternative approach based a weak semigroup regularity to establish the unique ergodicity and asymptotic stability of some dissipative SPDEs with multiplicative noise. Specifically, we adopt the notion of \emph{eventual continuity}, introduced by Gond and Liu \cite{GL2015} (see Definition~\ref{EvC}), or equivalently the asymptotic equicontinuity condition from Jaroszewska \cite{J2013}, as a necessary condition for the asymptotic stability (see Definition \ref{Def 2}). Eventual continuity captures the feature that a uniquely ergodic semigroup may exhibit sensitivity to initial data and is weaker than the e-property (see e.g. \cite{LS2006}). For more general results and discussions on the ergodic properties of eventually continuous Markov semigroups, we refer the interested reader to \cite{GLLL2024b,GLLL2024,LL2024}, as we as the review paper \cite{LL2025}.

    The main tool of our approach is \cite[Theorem 3.16]{GLLL2024b}, or equivalently Theorem \ref{Thm 4}, which establishes the equivalence between asymptotic stability with a unique invariant measure, eventual continuity, and a uniform lower bound condition (\ref{eq 4.1}). Furthermore, we in \cite[Chapter 4]{L2023} extend the concept of eventual continuity, introduce the notion of $d$-eventual continuity (see Definition \ref{Def d-eventual continuity}), and generalize \cite[Theorem 3.16]{GLLL2024b} to derive a weaker criterion for asymptotic stability, as detailed in Theorem~\ref{Thm 5}.

    The core of our approach lies in establishing eventual continuity. We demonstrate this property through the generalized couplings technique introduced in \cite{HM2011} and applied in \cite{NJG2017,KS2018,BKS2020}. It is worth of noting that our application of generalized couplings differs from those in \cite{HM2011, NJG2017, KS2018, BKS2020}. In these works, generalized couplings are used primarily to verify the uniqueness of invariant measures. For instance, in \cite[Corollary 4]{KS2018}, Kulik and Scheutzow apply generalized couplings to provide a sufficient condition for asymptotic stability under the assumption of invariant measures. This approach employees the Birkhoff ergodic theorem to deduce uniqueness, given the existence of invariant measures.

    In contrast, our method does not assume the existence of invariant measures. Instead, we establish both the existence and uniqueness of invariant measures simultaneously, and prove asymptotic stability in the process. The main advantage of the generalized couplings method is its broad applicability and ease of construction for SPDEs. However, since the marginal distribution in our case is not identical to that of the original Markov process, additional work is required to demonstrate eventual continuity through generalized couplings. Compared to the asymptotic strong Feller property or the e-property, eventual continuity is less restrictive, which simplifies certain computations (see more details in the proof). Furthermore, we note that the e-property assumption in \cite[Theorem 3]{KS2018} can be relaxed to eventual continuity with only minor modifications to the proof, as it suffices to focus on the long-time behavior of the Markov process.

    \vspace{0.6em}
    These notes are adapted from the author's doctoral thesis (see \cite[Chapters 4 and 7]{L2023}), defended in May 2023 at Peking University and available in the Peking University Library.
    
    \vspace{0.6em}
    The notes are organized as follows.  Section \ref{Sec 2} introduces some notions, definitions and results regarding ergodicity criteria by means of eventual continuity. In Section \ref{Sec 3}, we establish the asymptotic stability for a class of SPDEs with multiplicative noise,  with the detailed proofs collected in Section \ref{Sec 4}.

    \vspace{0.6em}
    Let $\mathcal{X}$ be a Polish (that is, a complete separable metric) space endowed with the metric $\rho$ and Borel $\sigma$-algebra $\mathcal{B}(\mathcal{X})$.  We use the following notations:\\
 	$\begin{aligned}
        \mathcal{M}(\mathcal{X})&=\text{ the family of finite Borel measures on } \mathcal{X},\\
	\mathcal{P}(\mathcal{X})&=\text{ the family of probability measures on } \mathcal{X},\\
	B_b(\mathcal{X})&=\text{ the space of bounded, Borel real-valued functions defined on }\mathcal{X},\\
	&\quad\;\text { endowed with the supremum norm: } \|f\|_{\infty}=\text{sup}_{x\in \mathcal{X}}\,|f(x)|,f\in B_b(\mathcal{X}),\\
	C_b(\mathcal{X})&=\text{ the subspace of }  B_b(\mathcal{X}) \text{ consisting of bounded continuous functions},\\
	L_b(\mathcal{X})&=\text{ the subspace of } C_b(\mathcal{X}) \text{ consisting of bounded Lipschitz functions},\\
	B(x,r)&=\{y\in \mathcal{X}:\rho(x,y)<r\} \text{ for } x\in \mathcal{X} \text{ and }r>0,\\
	\partial A,\overline{A},\text{Int}_{\mathcal{X}} (A)&=  \text{ the boundary, closure, interior of } A \text{ in } \mathcal{X}, \text{ respectively}, \\
        A^{r}&=\{y\in \mathcal{X}:\rho(x,y)<r,\;x\in A\} \text{ for } A\in \mathcal{B}(\mathcal{X}) \text{ and }r>0,\\
	\supp\mu&=\{x\in \mathcal{X}:\mu(B(x,\epsilon))>0 \text{ for any } \epsilon>0\}, \text{ for }\mu\in\mathcal{M}(\mathcal{X}), \\               &\quad\text{ i.e. the topological  support of the measure } \mu,\\
        \N,\Z,\R,\R_+&= \text{natural numbers, integers, real numbers, nonnegative numbers, respectively}. 
	\end{aligned}$	
   
   \vskip2mm
   For brevity, we use the notation $\langle f,\mu\rangle=\int_{\mathcal{X}}f(x)\mu(\d x)$  for $f\in B_b(\mathcal{X})$ and $\mu\in\mathcal{M}(\mathcal{X})$.

    \section{Preliminaries}\label{Sec 2}

    In this section, we review some necessary concepts related to Markov semigroups and introduce several criteria for unique ergodicity in eventually continuous semigroups. Specifically, Theorem \ref{Thm 4} is derived from \cite{GLLL2024b}, while the new result, Theorem \ref{Thm 5}, based on the $d$-eventual continuity introduced in \cite{GLLL2024b}, is a generalization of Theorem \ref{Thm 4}.

    \begin{definition} \label{Ma-oprt} 
    An operator $P:\mathcal{M}(\mathcal{X})\rightarrow\mathcal{M}(\mathcal{X})$ is a Markov operator on $\mathcal{X}$ if it satisfies the following conditions
    \begin{itemize}
        \item[$(\runum{1})$] {\rm(}Positive linearity{\rm)} $P(\lambda_1\mu_1+\lambda_2\mu_2)=\lambda_1P\mu_1+\lambda_2P\mu_2$ for $\lambda_1,\lambda_2\geq 0$, $\mu_1,\mu_2\in\mathcal{M}(\mathcal{X})$;
        \item[$(\runum{2})$]  {\rm(}Preservation of the norm{\rm)} $P\mu(\mathcal{X})=\mu(\mathcal{X})$ for $\mu\in\mathcal{M}(\mathcal{X})$.
       \end{itemize}
 \end{definition}       
 
\begin{definition}\label{Ma-regul}  
    A Markov operator $P$ is regular if there exists a linear operator $P^*:B_b(\mathcal{X})\rightarrow B_b(\mathcal{X})$ such that
    \begin{equation*}
        \langle f, P\mu\rangle = \langle P^*f, \mu\rangle\quad\forall\,f\in B_b(\mathcal{X}),\;\mu\in\mathcal{M}(\mathcal{X}).
    \end{equation*} 
\end{definition}    
    For the ease of notation, we simply rewrite $P^*$ as $P$.   A {\it Markov semigroup}  $\{P_t\}_{t\geq 0}$ on  $\mathcal{X}$ is a semigroup of Markov operators on $\mathcal{M}(\mathcal{X})$.    
\begin{definition}    \label{Ma-Feller}
 \begin{itemize}     
   \item[$(\runum{1})$]   A Markov operator $P$ is a Markov-Feller operator if it is regular and $P$ leaves $C_b(\mathcal{X})$ invariant, i.e., $P(C_b(\mathcal{X}))\subset C_b(\mathcal{X})$; 
   
   \item[$(\runum{2})$]   A Markov semigroup $\{P_t\}_{t\geq 0}$ is a Markov-Feller semigroup if $P_t$ is a Markov-Feller operator for any $t\geq 0$.  
   
   \item[$(\runum{3})$] A Markov semigroup $\{P_t\}_{t\geq 0}$ is  stochastically continuous if 
			\begin{equation*}
				\lim\limits_{t\rightarrow 0}P_tf(x)=f(x)\quad\forall\,f\in C_b(\mathcal{X}),\;x\in\mathcal{X}.
			\end{equation*}
    \end{itemize}
 
    \end{definition}

   Recall that $\mu\in\mathcal{P}(\mathcal{X})$ is \emph{invariant} for the semigroup $\{P_t\}_{t\geq 0}$ if $P_t\mu=\mu$ for any $t\geq 0$.

    \begin{definition}\label{EvC}
		A Markov semigroup 
		$\{P_t\}_{t\geq 0}$ is eventually continuous at $z\in \mathcal{X},$ if for any Lipschitz bounded function $f,$
		\begin{equation}\label{def1}
		\limsup\limits_{x\rightarrow z}\limsup\limits_{t\rightarrow\infty}|P_tf(x)-P_tf(z)|=0,
		\end{equation}
		that is, $\forall\,\epsilon>0$, $\exists\,\delta>0$ such that $\forall\,x\in B(z,\delta)$, there exists some $T_x\geq 0$ satisfying
		\begin{equation*}
			|P_tf(x)-P_tf(z)|\leq \epsilon\quad\forall\, t\geq T_x.
		\end{equation*}		
    \end{definition}

    \begin{definition}\label{Def 2}
   
		$\{P_t\}_{t\geq 0}$ is asymptotically stable if there exists a unique invariant measure $\mu_*\in\mathcal{P}(\mathcal{X})$ such that $\{P_t\mu\}_{t\geq 0} $ weakly converges to $\mu_*$  as $t\rightarrow\infty$ for any $\mu\in\mathcal{P}(\mathcal{X})$, i.e., $\langle f,P_t\mu\rangle\rightarrow\langle f,\mu_*\rangle$ as $t\rightarrow\infty$ for any $f\in C_b(\mathcal{X})$.
    \end{definition}

    \begin{theorem}{\rm (\cite[Theorem 3.16]{GLLL2024b}, \cite[Theorem 4.1]{L2023})}\label{Thm 4} 
		The following three statements are equivalent:
		\begin{itemize}
		    \item[$(\runum{1})$] $\{P_t\}_{t\ge 0}$ is asymptotically stable with unique invariant measure $\mu$.
		    \item[$(\runum{2})$] $\{P_t\}_{t\geq 0}$ is eventually continuous on $\mathcal{X}$, and there exists $z\in\mathcal{X}$ such that for any $\epsilon>0$,  
		  	\begin{equation}\label{eq 4.1}\tag{$\mathcal{C}$}
		  	   \inf\limits_{x\in \mathcal{X}}\liminf\limits_{t\rightarrow\infty}P_t(x,B(z,\epsilon))>0.
		\end{equation}

              \item[$(\runum{3})$]  There exists $z\in\mathcal{X}$ such that $\{P_t\}_{t\geq 0}$ is eventually continuous at $z$ and for any $\epsilon>0$, \eqref{eq 4.1} holds. 
	\end{itemize} 
		
	\end{theorem}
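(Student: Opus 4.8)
The plan is to prove the cyclic chain of implications $(\runum{1})\Rightarrow(\runum{2})\Rightarrow(\runum{3})\Rightarrow(\runum{1})$. The implication $(\runum{2})\Rightarrow(\runum{3})$ is immediate, since eventual continuity on all of $\mathcal{X}$ in particular gives eventual continuity at the distinguished point $z$, and condition \eqref{eq 4.1} is carried over verbatim. So the real content is in the first and last implications.

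For $(\runum{1})\Rightarrow(\runum{2})$: assuming asymptotic stability with unique invariant measure $\mu$, I would first establish eventual continuity. Fix a bounded Lipschitz $f$ and a point $z$; by asymptotic stability, for \emph{every} $x$ we have $P_tf(x)\to\langle f,\mu\rangle$ as $t\to\infty$ (apply Definition \ref{Def 2} with the initial distribution $\delta_x$). Hence $\limsup_{t\to\infty}|P_tf(x)-P_tf(z)| = |\langle f,\mu\rangle-\langle f,\mu\rangle| = 0$ for each fixed $x$, and a fortiori the double $\limsup$ in \eqref{def1} vanishes. This gives eventual continuity at every point of $\mathcal{X}$. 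For the lower bound \eqref{eq 4.1}, pick any $z\in\supp\mu$ and any $\epsilon>0$; then $\mu(B(z,\epsilon))>0$. Since $B(z,\epsilon)$ is open, the portmanteau theorem applied to $P_t(x,\cdot)=P_t\delta_x \Rightarrow \mu$ yields $\liminf_{t\to\infty}P_t(x,B(z,\epsilon))\ge \mu(B(z,\epsilon))>0$ for each $x$. The only subtlety is that this bound must be made uniform in $x$: the infimum over $x\in\mathcal{X}$ of the $\liminf$'s is at least $\mu(B(z,\epsilon))>0$ because the pointwise bound $\mu(B(z,\epsilon))$ is itself independent of $x$, so this is actually automatic.

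The substantive implication is $(\runum{3})\Rightarrow(\runum{1})$, and this is where the main work lies. Here one has eventual continuity only at the single point $z$ together with the uniform lower bound \eqref{eq 4.1}, and must produce a unique invariant measure and prove weak convergence $P_t\mu\to\mu_*$ for all $\mu$. The strategy is a coupling/contraction argument at the level of the orbit through $z$: using \eqref{eq 4.1}, for every $x$ the measures $P_t(x,\cdot)$ eventually put mass at least some $c(\epsilon)>0$ on every ball $B(z,\epsilon)$, which provides a "small-set"-type minorization forcing the orbits to cluster near $z$ infinitely often; combined with eventual continuity at $z$, which controls $|P_tf(x)-P_tf(z)|$ for $x$ near $z$ and large $t$, one bootstraps to show that $\{P_t\delta_x\}_t$ is asymptotically Cauchy in a suitable dual-Lipschitz sense, uniformly enough to extract a limit $\mu_*$ independent of $x$. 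One then checks $\mu_*$ is invariant (a standard consequence of $P_s(P_t\delta_x)=P_{t+s}\delta_x$ and the convergence, using stochastic continuity/Feller-type properties implicitly available, or the semigroup structure directly) and unique (any invariant $\nu$ satisfies $P_t\nu=\nu$, integrate the convergence $P_t\delta_x\to\mu_*$ against $\nu$).

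The hard part will be $(\runum{3})\Rightarrow(\runum{1})$: turning eventual continuity \emph{at a single point} plus the uniform-mass condition into genuine asymptotic stability, since one cannot appeal to any global equicontinuity or Feller regularity of the semigroup. The delicate point is interchanging the $\limsup_{x\to z}$ and $\limsup_{t\to\infty}$ limits in \eqref{def1} with the averaging induced by \eqref{eq 4.1}, and ensuring the $\epsilon$--$\delta$--$T_x$ dependence from Definition \ref{EvC} can be assembled into a bound uniform in the initial condition. I would expect to handle this by a careful iteration: use \eqref{eq 4.1} to find a time after which $P_t(x,\cdot)$ and $P_t(y,\cdot)$ each charge $B(z,\delta)$ with definite mass, decompose both measures accordingly, apply eventual continuity at $z$ to the "good" parts, and control the "bad" parts by their small total mass, then let $\epsilon\downarrow 0$. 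Since this is exactly the content of \cite[Theorem 3.16]{GLLL2024b}, I would in practice cite that proof, but the outline above is the route I would take to reconstruct it.
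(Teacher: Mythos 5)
Your treatment of $(\runum{2})\Rightarrow(\runum{3})$ and $(\runum{1})\Rightarrow(\runum{2})$ is correct and is the standard route (pointwise convergence $P_tf(x)\to\langle f,\mu\rangle$ kills the double $\limsup$; portmanteau at a point of $\supp\mu$ gives \eqref{eq 4.1} with the $x$-independent bound $\mu(B(z,\epsilon))$). For $(\runum{3})\Rightarrow(\runum{1})$, your iterative decomposition --- wait until $P_t(x,\cdot)$ and $P_t(y,\cdot)$ each charge $B(z,\delta)$ with mass $\alpha$, peel off the conditioned parts, apply eventual continuity at $z$ to those, iterate until $(1-\alpha)^k\|f\|_\infty$ is negligible --- is exactly the argument the paper uses in Part 2 of the proof of Theorem \ref{Thm 5} (which mirrors the cited proof of \cite[Theorem 3.16]{GLLL2024b}). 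So the core of the hard implication is correctly identified.

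The genuine gap is in how you produce the limit measure $\mu_*$. The decomposition argument yields $\lim_{t\to\infty}|P_tf(x)-P_tf(y)|=0$ for each fixed pair $x,y$, i.e.\ asymptotic agreement of orbits started from different points at the \emph{same} time. This is not Cauchyness of $t\mapsto P_t\delta_x$ in the dual-Lipschitz metric: comparing $P_tf(x)$ with $P_sf(x)$ for $s<t$ requires $P_tf(x)=\langle P_sf,P_{t-s}\delta_x\rangle$ to be close to $P_sf(x)$, which would need the agreement $|P_sf(y)-P_sf(x)|$ to be uniform in $y$ over the support of $P_{t-s}(x,\cdot)$ --- and the iteration only gives it pointwise. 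Consequently "extract a limit $\mu_*$" does not follow from what you establish. The paper closes this hole with a separate existence step (Part 1 of the proof of Theorem \ref{Thm 5}, following \cite[Proposition 3.8]{GLLL2024b}): one proves tightness of the Ces\`aro averages $Q_t(z,\cdot)=t^{-1}\int_0^t P_s(z,\cdot)\,ds$ using eventual continuity at $z$ together with \eqref{eq 4.1} (a proof by contradiction separating infinitely many $\epsilon$-charged compact sets), then invokes Krylov--Bogolyubov to obtain an invariant measure $\mu$; only then does $|P_tf(x)-P_tf(y)|\to 0$, integrated against $\mu$ in $y$, give $P_tf(x)\to\langle f,\mu\rangle$ for every $x$. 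Your proposal needs this tightness/Krylov--Bogolyubov step (or an equivalent anchor) spelled out; without it the construction of $\mu_*$, and hence also your invariance and uniqueness checks, has nothing to attach to.
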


    \vspace{0.3em}

    In Section \ref{Sec 3}, we present an approach to verify the asymptotic stability of a broad class of SPDEs using Theorem \ref{Thm 4}. However, some of these SPDE models are not Feller processes, and in some cases, verifying the eventual continuity is not straightforward. To address these issues, we introduce a generalized notion of eventual continuity, called \emph{$d$-eventual continuity}, and derive a more general criterion to handle such cases.

    Recall that $(\mathcal{X},\rho)$ is a Polish space. Suppose that $\mathcal{X}$ has an alternative distance $d$, which is continuous with respect to $\rho$. Note that $d$ is not necessarily complete. When $d \neq \rho$, we denote by $\overline{\mathcal{X}}^d$ the completion of $\mathcal{X}$ with respect to $d$ and regard $\mathcal{X}$ as a subset of $\overline{\mathcal{X}}^d$. Let $\{P_t\}_{t \geq 0}$ be a Markov semigroup on $\mathcal{X}$. Following \cite{KS2018}, we say that $\{P_t\}_{t \geq 0}$ is $d$-Feller if, for each bounded and $d$-continuous function $f$, $P_t f$ is $d$-continuous for all $t \geq 0$. Building on the concept of $d$-equicontinuity from \cite{KS2018}, we introduce the notion of $d$-eventual continuity as follows.
	
	\begin{definition}{\rm (\cite[Definition 4.3]{L2023})}\label{Def d-eventual continuity}
		A Markov semigroup $\{P_t\}_{t\geq 0}$ is said to be $d$-eventually continuous at $z\in\mathcal{X}$, if for any $f\in C_b(\mathcal{X})\cap L_b(\mathcal{X},d),$
		\begin{equation*}
		\lim\limits_{x\rightarrow z}\limsup\limits_{t \to \infty}|P_tf(x)-P_tf(z)|=0,
		\end{equation*}
		where $L_b(\mathcal{X},d)$ denotes the collection of bounded $d$-Lipschitz continuous functions.
	\end{definition}

		Using these notions, we have the following theorem.
	
	\begin{theorem}{\rm (\cite[Theorem 4.2]{L2023})}\label{Thm 5} 
		Assume that Markov semigroup $\{P_t\}_{t\geq 0}$ is Feller or $d$-Feller,
		and that $\{P_t\}_{t\geq 0}$ is $d$-eventually continuous on $\mathcal{X}$. Assume that there exists some $z\in\mathcal{X}$ such that for any $\epsilon>0$,         
        \begin{equation*}
		  	   \inf\limits_{x\in \mathcal{X}}\liminf\limits_{t\rightarrow\infty}P_t(x,B_d(z,\epsilon))>0,
		\end{equation*}
        where $B_d(z,\epsilon):=\{x\in\mathcal{X}:d(z,x)<\epsilon\}$. Then $\{P_t\}_{t\geq 0}$ is asymptotically stable in the $d$-topology, that is, there exists unique invariant measure $\mu\in\mathcal{M}_1(\mathcal{X})$ such that, for all $x\in\mathcal{X}$, $P_t(x,\cdot)$ weakly converges to $\mu$ in the $d$-topology as $t\rightarrow\infty$.

	\end{theorem}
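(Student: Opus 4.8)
The plan is to reduce Theorem~\ref{Thm 5} to Theorem~\ref{Thm 4} by transporting everything to the completion $\overline{\mathcal{X}}^d$. First I would observe that since $d$ is continuous with respect to $\rho$, the identity map $\mathcal{X}\to\overline{\mathcal{X}}^d$ is continuous, so the embedding is Borel measurable and the pushforward of any $\mu\in\mathcal{P}(\mathcal{X})$ is a well-defined probability measure on $\overline{\mathcal{X}}^d$. The key structural step is to extend the semigroup: for $f\in C_b(\overline{\mathcal{X}}^d)$, the restriction $f|_{\mathcal{X}}$ lies in $C_b(\mathcal{X})\cap$ (the $d$-uniformly continuous functions, hence approximable by $d$-Lipschitz functions), so the $d$-Feller (or Feller) hypothesis lets one make sense of $P_t f$ as a bounded $d$-continuous function on $\mathcal{X}$; I would then argue that $P_tf$ extends uniquely to a bounded continuous function on $\overline{\mathcal{X}}^d$, giving a Markov semigroup $\{\widetilde P_t\}_{t\ge 0}$ on the Polish space $\overline{\mathcal{X}}^d$. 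Here one must check the semigroup property survives the extension, which follows because the extension is determined by values on the dense set $\mathcal{X}$.

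Next I would verify that $\{\widetilde P_t\}$ satisfies the hypotheses of Theorem~\ref{Thm 4} on $\overline{\mathcal{X}}^d$. The lower bound condition \eqref{eq 4.1} for $\widetilde P_t$ at the point $z$ with balls $B_{\overline{\mathcal{X}}^d}(z,\epsilon)$ is immediate from the assumed bound $\inf_x\liminf_t P_t(x,B_d(z,\epsilon))>0$, since $B_d(z,\epsilon)\subseteq B_{\overline{\mathcal{X}}^d}(z,\epsilon)\cap\mathcal{X}$ and the mass on $\mathcal{X}$ transports to mass on the completion; one should also note that the infimum over $\overline{\mathcal{X}}^d$ reduces to the infimum over the dense subset $\mathcal{X}$ by continuity of $x\mapsto\liminf_t\widetilde P_t(x,B(z,\epsilon'))$ applied to a slightly smaller radius, or more carefully by a monotonicity-in-$\epsilon$ argument. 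For eventual continuity of $\{\widetilde P_t\}$ on $\overline{\mathcal{X}}^d$: given $\bar x\in\overline{\mathcal{X}}^d$ and Lipschitz bounded $f$ on $\overline{\mathcal{X}}^d$, one approximates $\bar x$ by points of $\mathcal{X}$, uses that $f|_{\mathcal{X}}\in C_b(\mathcal{X})\cap L_b(\mathcal{X},d)$, and invokes $d$-eventual continuity of $\{P_t\}$ together with the uniform-in-$t$ estimate $\|\widetilde P_t f - \widetilde P_t g\|_\infty\le\|f-g\|_\infty$ to pass limits; the double-$\limsup$ structure of eventual continuity is exactly what makes this approximation go through without needing uniformity in $t$.

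With the hypotheses verified, Theorem~\ref{Thm 4} yields a unique invariant measure $\widetilde\mu\in\mathcal{P}(\overline{\mathcal{X}}^d)$ for $\{\widetilde P_t\}$ with $\widetilde P_t\widetilde\nu\rightharpoonup\widetilde\mu$ for every $\widetilde\nu$. The final step is to descend back to $\mathcal{X}$: I would show $\widetilde\mu$ is supported on $\mathcal{X}$ — this is where the lower bound at a point $z\in\mathcal{X}$ and a standard argument (the invariant measure inherits tightness/support properties from the dynamics, e.g. via a Krylov--Bogolyubov-type computation showing the mass escaping $\mathcal{X}$ is zero) come in — so that $\mu:=\widetilde\mu|_{\mathcal{X}}\in\mathcal{P}(\mathcal{X})$ is invariant for $\{P_t\}$, and weak convergence in $\overline{\mathcal{X}}^d$ restricted to test functions in $C_b(\mathcal{X})\cap L_b(\mathcal{X},d)$ is precisely weak convergence in the $d$-topology. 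Uniqueness of $\mu$ transfers from uniqueness of $\widetilde\mu$.

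I expect the main obstacle to be the two measurability/support bookkeeping points: (a) justifying that $P_tf$ genuinely extends to a \emph{continuous} function on the completion (rather than merely a $d$-uniformly continuous one on $\mathcal{X}$, which is automatic) and that this is compatible across all $t$ simultaneously, and (b) proving $\widetilde\mu(\overline{\mathcal{X}}^d\setminus\mathcal{X})=0$ so that the invariant measure really lives on the original space — without this last point one only gets asymptotic stability in the completion, not on $\mathcal{X}$ itself. The eventual-continuity verification, by contrast, is routine once one exploits that eventual continuity only constrains the tail $t\to\infty$ and is stable under uniform approximation of the test function.
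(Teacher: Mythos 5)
There is a genuine gap at the foundation of your reduction: the continuous (Feller) extension $\{\widetilde P_t\}$ of the semigroup to $\overline{\mathcal{X}}^d$ need not exist. A bounded $d$-continuous function on $\mathcal{X}$ does not in general extend to a continuous function on the completion (it would need to map $d$-Cauchy sequences to Cauchy sequences, i.e.\ a uniform-continuity property that neither the $d$-Feller hypothesis nor $d$-eventual continuity supplies --- the latter only constrains the tail in $t$, not any fixed $P_tf$). Worse, under the alternative hypothesis that $\{P_t\}$ is merely Feller with respect to $\rho$, the function $P_tf$ for $f\in L_b(\mathcal{X},d)$ need not be $d$-continuous at all, so there is no candidate extension even on $\mathcal{X}$; yet the theorem must cover this case. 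Since every subsequent step (transporting the lower bound to $\overline{\mathcal{X}}^d$, verifying eventual continuity there, invoking Theorem~\ref{Thm 4}, whose standing hypotheses in \cite{GLLL2024b} include the Feller property) rests on this extension, you have identified the obstacle correctly in your own point (a) but not resolved it, and I do not see how to resolve it without additional assumptions.

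The paper's proof avoids this entirely and is structured differently. For existence, it extends only the \emph{kernels}, crudely, by setting $\overline{P}_t(x,\cdot):=P_t(z,\cdot)$ for $x\in\overline{\mathcal{X}}^d\setminus\mathcal{X}$; no continuity of the extension is claimed or needed. This suffices to run the tightness argument for the Ces\`aro averages $\overline{Q}_t(z,\cdot)$ (the Feller or $d$-Feller property enters only to guarantee $P_tf_i\in C_b(\mathcal{X})$ for the cut-off test functions $f_i$), producing an invariant measure that is automatically supported in $\mathcal{X}$ because all kernels charge only $\mathcal{X}$. Asymptotic stability is then proved \emph{directly on} $\mathcal{X}$, not by citing Theorem~\ref{Thm 4}: one assumes $\limsup_t|P_tf(x_1)-P_tf(x_2)|\geq 3\epsilon$ for some $f\in L_b(\mathcal{X},d)$, iteratively splits $P_{t_1+\cdots+t_k}\delta_{x_j}$ into components $\nu_i^{x_j}$ supported in $B_d(z,\delta)$ plus a remainder of mass $(1-\alpha)^k$, and uses $d$-eventual continuity at $z$ to bound each $\limsup_t|\langle P_tf,\nu_i^{x_1}\rangle-\langle P_tf,\nu_i^{x_2}\rangle|$ by $\epsilon$, reaching a contradiction. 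If you want to salvage a reduction-style argument, you would have to redo the proof of Theorem~\ref{Thm 4} for the discontinuous extension $\{\overline{P}_t\}$ (which is eventually continuous only at $z$ and not Feller on $\overline{\mathcal{X}}^d$), which is in effect what the paper does.
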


        \begin{remark}
            The notion of $d$-eventual continuity also applies to discrete-time Markov semigroups, and Theorem \ref{Thm 5} can be readily extended to this setting.
        \end{remark}

        To prove Theorem \ref{Thm 5}, it is sufficient to observe two key facts. First, since $d$ is continuous with respect to $\rho$, any compact set $K$ remains compact in the space $(\overline{\mathcal{X}}^d,d)$. Second, the Feller or $d$-Feller property is only used to establish the existence of invariant measures, as seen in the proof of \cite[Proposition 3.8]{GLLL2024b}, and both types of Feller properties are adequate for our purposes.

        \begin{proof}[Proof of Theorem \ref{Thm 5}] 
        
        The proof consists of two parts.

        \noindent $\mathbf{Part\;1:\;existence\;of\;invariant\;measures}.$
        
		We extend $\{P_t(x,\cdot):x\in\mathcal{X}\}_{t\geq 0}$ to $\{\overline{P}_t(x,\cdot):x\in\overline{\mathcal{X}}^d\}_{t\geq 0}$ by
		\begin{equation*}
		\overline{P}_t(x,\cdot)=\begin{cases}
		P_t(x,\cdot),\quad &x\in\mathcal{X};\\
		P_t(z,\cdot),\quad &x\notin\mathcal{X}.
		\end{cases}
		\end{equation*}
		Then  $\{\overline{P}_t\}_{t\geq 0}$ is a Markov semigroup on $(\overline{\mathcal{X}}^d,d)$ and satisfies that		
		\begin{equation*}
		\inf\limits_{x\in \overline{\mathcal{X}}^d}\liminf\limits_{t\rightarrow\infty}\overline{P}_t(x,B_d(z,\epsilon))>0,
		\end{equation*}
	   Furthermore, $\{\overline{P}_t\}_{t\geq 0}$ is eventually continuous at $z$. Fix any $\bar{f}\in L_b(\overline{\mathcal{X}^d})$, and let $f:=\bar{f}|_{\mathcal{X}}.$ Then
	
		\begin{equation*}
			\overline{P}_tf(x)-\overline{P}_tf(z)=\begin{cases}
			P_tf(x)-P_tf(z),\quad &x\in\mathcal{X};\\
			0,\quad &x\notin\mathcal{X},
			\end{cases}
		\end{equation*}
		which implies the eventual continuity of $\{\overline{P}_t\}_{t\geq 0}$ immediately by assumptions.

		Denote $\overline{Q}_t(z,\cdot)=t^{-1}\int_{0}^{t}\overline{P}_s(z,\cdot)ds$. Note that $\overline{Q}_t(z,\cdot)=Q_t(z,\cdot),\;t>0$.
		We claim that $\{\overline{Q}_t(z,\cdot)\}_{t\geq 0}$ is tight in $(\overline{\mathcal{X}}^d,d)$.  Assume, contrary to our claim that $\{\overline{Q}_t(z,\cdot)\}_{t\geq 0}$ is not tight, then by the same arguments as in \cite[Proposition 3.8]{GLLL2024b}, there exists some $\epsilon>0,$ a sequence of compact sets $\widetilde{K}_i\subset \overline{\mathcal{X}}^d,\;i \geq 1$, and an increasing sequence of reals $\{t_i\}_{i\geq 1},t_i\rightarrow\infty,$ such that
		\begin{align*}
		\overline{Q}_{t_i}(z,\widetilde{K}_i)&\geq \epsilon,\quad\forall i\geq 1,\\
		\widetilde{K}_{i,d}^{\epsilon/2}\cap \widetilde{K}_{j,d}^{\epsilon/2}&=\emptyset,\quad \forall i,j\geq 1,\;i\neq j,
		\end{align*}
		where $\widetilde{K}_{i,d}^{\epsilon/2}:=\{x\in\overline{\mathcal{X}^d}:d(x,y)<\epsilon/2,\;y\in \widetilde{K}_i\}$. \par 		
		Now let $K_i:=\widetilde{K}_i\cap \mathcal{X}$ and	$K_{i,d}^{\epsilon/4}:=\{x\in\mathcal{X}:d(x,y)<\epsilon/4,\;y\in K_i\}$, then $K_i$ is compact and $K_{i,d}^{\epsilon/4}\subset \widetilde{K}_{i,d}^{\epsilon/2}$ for $i\geq 1$. In addition,
		\begin{equation*}
		Q_{t_i}(z,K_i)=\overline{Q}_{t_i}(z,\widetilde{K}_i)\geq \epsilon,\quad\forall i\geq 1.
		\end{equation*}\par 
		For each $i\geq 1$, define $f_i:\mathcal{X}\rightarrow \mathbb{R}$ by
		\begin{equation*}
		f_i(y)=d(y,(K_{i,d}^{\epsilon/4})^c)/(d(y,(K_{i,d}^{\epsilon/4})^c)+d(y,K_i)),
		\end{equation*}
		which fulfills that $f_i\in C_b(\mathcal{X})\cap L_b(\mathcal{X},d)$, and $\mathbf{1}_{K_i}\leq f_i\leq\mathbf{1}_{K_i^{\epsilon/4}}.$ Thus $\{P_tf_i\}_{t\geq 0}\subset C_b(\mathcal{X})$ whether $\{P_t\}_{t\geq 0}$ is Feller or $d$-Feller. Then the rest of the proof remains the same as that of \cite[Proposition 3.8]{GLLL2024b}.

	Consequently, we show that  $\{\overline{Q}_t(z,\cdot)\}_{t\geq 0}$ is tight in $(\overline{\mathcal{X}}^d,d)$, and it guarantees some invariant measure $\bar{\mu}$ for $\{\overline{P}_t\}_{t\geq 0}$. Clearly,  $\bar{\mu}$ is supported in $\mathcal{X}$ by the construction of $\{\overline{P}_t\}_{t\geq 0}.$ Consequently, let $\mu:=\bar{\mu}|_{\mathcal{X}}$, i.e.,
		\begin{equation*}
			\mu(A)=\bar{\mu}(A)\quad\text{ for }A\in\mathcal{B}(\mathcal{X}),
		\end{equation*}
		then $\mu$ is an invariant measure for $\{P_t\}_{t\geq 0}$.

	\vspace{0.3em}

	\noindent $\mathbf{Part\;2:\;asymptotic\;stability}.$	\par  
	It suffices to show that
	\begin{equation}\label{eq Thm 4.2 1}
	\lim\limits_{t\rightarrow\infty}|P_tf(x)-P_tf(y)|=0
	\end{equation}
	holds for all $x,y\in \mathcal{X}$ and $f\in L_b(\mathcal{X},d).$\par 
	Assume, contrary to our claim, that (\ref{eq Thm 4.2 1}) fails for some  $f\in L_b(\mathcal{X},d)$ and points $x_1,x_2\in \mathcal{X}$ such that
	\begin{equation*}
	\limsup\limits_{t\rightarrow\infty}|P_tf(x_1)-P_tf(x_2)|>0.
	\end{equation*}
	Choose $\epsilon>0$ such that
	\begin{equation*}
	\limsup\limits_{t\rightarrow\infty}|P_tf(x_1)-P_tf(x_2)|\geq 3\epsilon.
	\end{equation*}
	$\{P_t\}_{t\geq 0}$ is $d$-eventually continuous at $z\in \mathcal{X},$ thus we can choose $\delta>0$ such that
	\begin{equation*}
	\limsup\limits_{t\rightarrow\infty}|P_tf(x)-P_tf(z)|<\epsilon/2\quad\text{for all }x\in B_d(z,\delta). 
	\end{equation*}\par 
	Condition (\ref{eq 4.1}) gives some $\alpha>0$ such that
	\begin{equation*}
	\liminf\limits_{t\rightarrow\infty}P_t(x,B_d(z,\delta))>\alpha\quad\text{for all }x\in \mathcal{X}.
	\end{equation*}
	Then Fatou's lemma gives, for any $\nu\in\mathcal{M}_1(\mathcal{X})$ with $\text{supp}_d\;\nu\subset B_d(z,\delta),$
	\begin{equation*}
	\liminf\limits_{t\rightarrow\infty}P_t\nu(B_d(z,\delta))\geq\int_{\mathcal{X}}\liminf\limits_{t\rightarrow\infty}P_t(y,B_d(z,\delta))\nu(dy)>\alpha,
	\end{equation*}
	where $\text{supp}_d\;\nu:=\{x\in\mathcal{X}:\nu(B_d(z,\gamma))>0\text{ for all } \gamma>0\}$.\par 
	Let $k\geq 1$ be such that $2(1-\alpha)^k\|f\|_{\infty}<\epsilon.$ By induction we are going to define four sequences of measures $\{\nu_i^{x_1}\}_{i=1}^k,\{\mu_i^{x_1}\}_{i=1}^k,\{\nu_i^{x_2}\}_{i=1}^k,\{\mu_i^{x_2}\}_{i=1}^k,$ and a sequence of positive numbers $\{t_{i}\}_{i=1}^k$ in the following way: let
	$t_1>0$ be such that
	\begin{center}
		$P_{t_{1}}(x_j,B_d(z,\delta))>\alpha,\quad j=1,2.$ 
	\end{center}\par
	Set 
	\begin{center}
		$\nu_1^{x_j}(\cdot) = \dfrac{P_{t_1}\delta_{x_j}(\cdot \cap B_d(z,\delta))}{P_{t_1}\delta_{x_j}(B_d(z,\delta))},\quad$
		$\mu_1^{x_j}(\cdot) = \dfrac{1}{1-\alpha}(P_{t_1}\delta_{x_j}(\cdot)-\alpha\nu_1^{x_j}(\cdot)),\quad j=1,2.$
	\end{center}\par
	Assume that we have done it for $i = 1,\dots , l,$ for some $l < k.$ Now let $t_{l+1}$ be such that
	\begin{equation*}
	P_{t_{1+1}}\mu_l^{x_j}(B_d(z,\delta))>\alpha,\quad j=1,2.
	\end{equation*}\par
	Set 
	\begin{center}
		$\nu_{l+1}^{x_j}(\cdot) = \dfrac{P_{t_{l+1}}\mu_l^{x_j}(\cdot \cap B_d(z,\delta))}{	P_{t_{1+1}}\mu_l^{x_j}(B_d(z,\delta))},\quad$
		$\mu_1^{x_j}(\cdot) = \dfrac{1}{1-\alpha}(P_{t_{l+1}}\mu_l^{x_j}(\cdot)-\alpha\nu_{l+1}^{x_j}(\cdot)),\quad j=1,2.$
	\end{center}\par
	\begin{equation*}
	\begin{aligned}
	P_{t_1+\dots+t_k}\delta_{x_j}(\cdot)&=\alpha P_{t_2+\dots+t_k}\nu_1^{x_j}(\cdot)+\alpha(1-\alpha) P_{t_3+\dots+t_k}\nu_2^{x_j}(\cdot)+ \dots +\\
	&+\alpha(1-\alpha)^{k-1}\nu_k^{x_j}(\cdot)+(1-\alpha)^k \mu_k^{x_j}(\cdot),
	\end{aligned}
	\end{equation*}
	where $\text{supp}_d\;{{\nu}_i^{x_j}}\subset B_d(z,\delta),\;j=1,2.$ \par 
	Thus
	\begin{equation*}
	\begin{aligned}
	\limsup\limits_{t\rightarrow\infty}|\langle P_{t}f, \nu_i^{x_1} \rangle-\langle P_{t}f, \nu_i^{x_2} \rangle|&=	\limsup\limits_{t\rightarrow\infty}|\langle P_{t}f-P_{t}f(z), \nu_i^{x_1} \rangle-\langle P_{t}f-P_{t}f(z), \nu_i^{x_2} \rangle|\\
	&\leq\epsilon/2+\epsilon/2=\epsilon.
	\end{aligned}
	\end{equation*}
	The Fatou's lemma gives 
	\begin{equation*}
	\begin{aligned}
	3\epsilon&\leq\limsup\limits_{t\rightarrow\infty}|P_tf(x_1)-P_tf(x_2)|\\
	&=\limsup\limits_{t \to \infty}|\langle f,P_{t} \delta_{x_1}\rangle-\langle f,P_{t} \delta_{x_2}\rangle| \\
	&\leq \alpha\limsup\limits_{t \to \infty}|\langle P_{t}f, \nu_1^{x_1} \rangle-\langle P_{t}f, \nu_1^{x_2} \rangle|+\cdots+\alpha(1-\alpha)^k\limsup\limits_{t \to \infty}|\langle P_{t}f, \nu_k^{x_1}\rangle-\langle P_{t}f, \nu_1^{x_2} \rangle|+\epsilon\\
	&\leq (\alpha+\cdots+\alpha(1-\alpha)^k)\epsilon+\epsilon\\
	&< 2\epsilon,
	\end{aligned}
	\end{equation*}
	which is impossible. This completes the proof.

    \end{proof}

     \section{Ergodicity for some stochastic partial differential equations }\label{Sec 3}

    In this section, we present an approach for establishing the unique ergodicity of certain SPDEs with multiplicative noise. The primary tool we use is Theorems \ref{Thm 4} and \ref{Thm 5}. Specifically, to prove asymptotic stability, it suffices to establish the \emph{eventual continuity} and the \emph{lower bound condition} \eqref{eq 4.1} at a single point for the Markov-Feller semigroup $\{P_t\}_{t\geq 0}$ associated with the given equation. Our approach simultaneously provides the existence and uniqueness of the invariant measure, along with weak convergence.

    As mentioned earlier, the eventual continuity is established using the generalized coupling method. One advantage of this method is its technical convenience, as it allows for the construction of suitable processes in the context of multiplicative perturbations. For deriving the lower bound condition, we utilize a Lyapunov structure combined with a form of irreducibility. More specifically, we show that relation \eqref{eq 4.1} can be guaranteed as follows.

    \begin{proposition}{\rm (\cite[Proposition 7.1]{L2023})}\label{prop lbc}
   The lower bound condition \eqref{eq 4.1} of Theorem \ref{Thm 4} $(\runum{2})$ is satisfied for $\{P_t\}_{t\geq 0}$ if the following two conditions hold:    
        \begin{itemize}
            \item[$(\runum{1})$] {\rm(}Lyapunov structure{\rm)} There exist measurable functions $V\colon\mathcal{X}\rightarrow\R_+$,  $h\colon\R_+\rightarrow\R_+$ satisfying  $\lim_{t\rightarrow\infty}h(t)=0$,  and a constant $C>0$ such that
            \begin{equation}\label{eq lyapunov}
                P_tV(x)\leq h(t)V(x)+C\quad \forall\,x\in\mathcal{X},\;t\geq 0.
            \end{equation}
             \item[$(\runum{2})$] {\rm(}Uniform irreducibility{\rm)}  There exists  $z\in\mathcal{X}$ such that for any $\epsilon,R>0$, there is $T=T(\epsilon,R)>0$ satisfying
		\begin{equation}\label{eq irreducibility}
		\inf\limits_{x\in \{V\leq R\}}P_T(x,B(z,\epsilon))>0.
		\end{equation} 
        \end{itemize}
          
    \end{proposition}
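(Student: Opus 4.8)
\textbf{Proof strategy for Proposition \ref{prop lbc}.}

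The plan is to combine the Lyapunov estimate \eqref{eq lyapunov} with a Markov (Chebyshev) argument to confine the trajectory to a fixed sublevel set $\{V\le R\}$ for a large proportion of time, and then to bootstrap the uniform irreducibility \eqref{eq irreducibility} on that sublevel set by one application of the Chapman--Kolmogorov identity. First I would observe that by \eqref{eq lyapunov}, $P_tV(x)\le h(t)V(x)+C$, and since $h(t)\to 0$ there is $t_0>0$ with $h(t)\le 1$ for all $t\ge t_0$; hence $P_tV(x)\le V(x)+C$ for $t\ge t_0$. More usefully, for fixed $x$ and $t$ large, $P_tV(x)\le h(t)V(x)+C$, so by Chebyshev's inequality
\begin{equation*}
P_t\bigl(x,\{V>R\}\bigr)\le \frac{P_tV(x)}{R}\le \frac{h(t)V(x)+C}{R}.
\end{equation*}
This does not immediately give a uniform-in-$x$ bound because of the term $h(t)V(x)$, so the first genuine step is to handle the dependence on $V(x)$: one would like to say that after some time the trajectory has entered $\{V\le R_0\}$ for a fixed $R_0$ regardless of the (possibly large) starting value $V(x)$. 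This is exactly where the decay $h(t)\to 0$ is essential — for any starting point, $h(t)V(x)\le 1$ once $t\ge t_1(x)$, giving $P_tV(x)\le C+1=:R_0/2$ say, and then Chebyshev gives $P_t(x,\{V\le R_0\})\ge 1/2$ for all $t\ge t_1(x)$.

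The second step fixes the radius. With $R_0:=2(C+1)$, we have just shown: for every $x\in\mathcal{X}$ there is $t_1(x)$ such that $P_t(x,\{V\le R_0\})\ge 1/2$ for all $t\ge t_1(x)$. Now apply uniform irreducibility \eqref{eq irreducibility} with this $R_0$ and the given $\epsilon$: there is $T=T(\epsilon,R_0)>0$ and a number $p:=\inf_{y\in\{V\le R_0\}}P_T(y,B(z,\epsilon))>0$. The third step is the Chapman--Kolmogorov splitting: for any $t\ge t_1(x)$,
\begin{equation*}
P_{t+T}(x,B(z,\epsilon))=\int_{\mathcal{X}}P_T(y,B(z,\epsilon))\,P_t(x,\d y)\ge \int_{\{V\le R_0\}}P_T(y,B(z,\epsilon))\,P_t(x,\d y)\ge p\cdot P_t(x,\{V\le R_0\})\ge \frac{p}{2}.
\end{equation*}
Taking $\liminf_{s\to\infty}$ over $s=t+T$ (equivalently, using that the above holds for all sufficiently large $s$ of the form $t+T$ with $t\ge t_1(x)$, and noting $T$ is fixed) yields $\liminf_{s\to\infty}P_s(x,B(z,\epsilon))\ge p/2$ for every $x\in\mathcal{X}$, and since $p/2>0$ is independent of $x$, taking $\inf_{x\in\mathcal{X}}$ preserves the strict positivity. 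This is precisely \eqref{eq 4.1}.

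The main obstacle, and the only point requiring care, is the non-uniformity of $t_1(x)$ in $x$: the time needed to return to the sublevel set $\{V\le R_0\}$ genuinely depends on how large $V(x)$ is, so one cannot expect a single time $T$ that works for all starting points. The resolution is that \eqref{eq 4.1} only asks for a uniform lower bound on the $\liminf$ as $t\to\infty$, not a uniform hitting time; since $T$ (from irreducibility) and $p$ are both uniform over the fixed set $\{V\le R_0\}$, and the Lyapunov bound forces entry into that set eventually from any start, the $\liminf$ is bounded below by $p/2$ uniformly. One should also check the measurability of $\{V\le R\}$ (immediate since $V$ is measurable) so that the sets appearing in the Chapman--Kolmogorov integral are Borel, and verify that $\liminf_{s\to\infty}$ may be passed through the estimate, which is routine since the bound $P_{t+T}(x,B(z,\epsilon))\ge p/2$ holds for all large $t$.
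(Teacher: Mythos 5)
Your proposal is correct and follows essentially the same route as the paper: a Markov/Chebyshev bound on the Lyapunov function to get $\liminf_{t\to\infty}P_t(x,\{V\le R\})\ge 1/2$ for a fixed sublevel set, uniform irreducibility on that set, and a Chapman--Kolmogorov splitting to conclude. Your explicit treatment of the $x$-dependent entry time $t_1(x)$ is a point the paper leaves implicit inside the $\liminf$, but the argument is the same.
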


    \begin{remark}
        In the original version of \cite[Proposition 7.1]{L2023}, we use some energy estimates to guarantee the growth at the infinity. Namely, we propose that the lower bound condition \eqref{eq 4.1} is satisfied if the uniform irreducibility \eqref{eq irreducibility} holds, and there exists $z\in\mathcal{X}$, $k\geq 1$, $C>0$, such that for each $x\in\mathcal{X}$, there exists a nonincreasing function $r_x:\mathbb{R}_+\rightarrow\mathbb{R}_+$ with $\lim\limits_{t\rightarrow\infty}r_x(t)=0$, and that
    	\begin{equation*}
    	\mathbb{E}\,\rho(\mathbf{u}^x(t),z)^k\leq r_x(t)+C,
	    \end{equation*}
        where $\{\boldsymbol{u}^x(t)\}_{t\geq 0, x\in\mathcal{X}}$ denotes the Markov process associated with the semigroup $\{P_t\}_{t\geq 0}$.

        Thanks to insightful discussions with Yuan Liu, in the present notes, we adopt a more general framework by using Lyapunov functionals \eqref{eq lyapunov} to replace the energy estimates condition.
    \end{remark}

    \begin{proof}[Proof of Proposition \ref{prop lbc}.]
    By \eqref{eq lyapunov}, for any $R>0$ and $x\in\mathcal{X}$, it follows that
	\begin{equation*}
	\begin{aligned}	
	P_t(x,\{V\leq R\})&\geq 1-R^{-1}P_tV(x)\geq 1-R^{-1}(h(t)V(x)+C).
	\end{aligned}
	\end{equation*} 
	Given that $\lim_{t\rightarrow\infty}h(t)=0$, there exists $R>0$ sufficiently large such that
	\begin{equation}\label{Equation lbc1}
		\liminf\limits_{t\rightarrow\infty}P_t(x,\{V\leq R\})\geq \tfrac{1}{2}\quad\forall\,x\in \mathcal{X}.
	\end{equation} 
	Meanwhile, using the uniform irreducibility \eqref{eq irreducibility}, for any $\epsilon>0$, there exists $T,p>0$ satisfying
	\begin{equation}\label{Equation lbc2}
		P_T(x,B(0,\epsilon))\geq p>0\,\quad\forall\,x\in \{V\leq R\}. 
	\end{equation}

    Summarizing \eqref{Equation lbc1} and  \eqref{Equation lbc2}, for any $t>0$, we derive that 
	\begin{equation*}
	\begin{aligned}
	P_{t+T}(x,B(0,\epsilon))&=\int_{\mathcal{X}}P_t(x,\d y)P_{T}(y,B(0,\epsilon)))\d y\\
    &\geq\int_{\{V\leq R\}}P_t(x,\d y)P_{T}(y,B(0,\epsilon)))\d y\\
	&\geq P_t(x,\{V\leq R\})\cdot \inf\limits_{x\in \{V\leq R\}}P_T(x,B(0,\epsilon)),
	\end{aligned}
	\end{equation*}
    which implies that     
	\begin{equation*}			\liminf\limits_{t\rightarrow\infty}P_t(x,B(0,\epsilon))\geq \tfrac{p}{2}>0\quad \forall\,x\in\mathcal{X}.
	\end{equation*}
	\end{proof}
    
	In summary, our approach is divided into the verification of following four properties: 
    \begin{itemize}
        \item [$(\runum{1})$]  Feller property; 
        \item [$(\runum{2})$] Eventual continuity;
        \item [$(\runum{3})$] Lyapunov structure;
        \item [$(\runum{4})$] Uniform irreducibility.
    \end{itemize}

    Usually, the Feller property and Lyapunov structure would follow from the regularity and dissipation of the system.  The way we show the eventual continuity is through the generalized couplings approach introduced in \cite{HM2011} and applied in \cite{NJG2017,KS2018,BKS2020}.  As for the uniform irreducibility, when dealing with SPDEs driven by additive noise, this problem can typically be solved by control methods, for example, see in \cite{EM2001}. Furthermore, when the noise is nondegenerate multiplicative, the uniform irreducibility can be argued by control methods, for example, see in \cite{Z2009,HM2011}.

    \subsection{The 2D stochastic Navier--Stokes equation}\label{Section NS}
 	
	To clarify the basic logic of our method, we begin with a well-known example, the 2D stochastic Navier--Stokes equation with multiplicative noise. Despite there are fruitful results about the the unique ergodicity of the 2D stochastic Navier--Stokes equation, for example, see  \cite{FM1995,M1999,EM2001,EMS2001,BKL2001,KS2001,KPS2002,M2002,HM2006,MP2006,O2008,DP2018}, it is quite helpful to illustrate the typical procedures, and capture the key points of the eventual continuity approach.

	Recall	the 2D stochastic Navier--Stokes equation with multiplicative noise posed on a bounded domain $D\subset\mathbb{R}^2$ with a smooth boundary $\partial D:$
	\begin{equation}\label{Equation NS}
	\left\{\begin{aligned}
	&d\mathbf{u}+\mathbf{u}\cdot\nabla\mathbf{u}dt=(\nu\Delta\mathbf{u}-\nabla p)dt+\sum_{k=1}^{m}\sigma_k(\mathbf{u})dW_t^k,\\
	&\mathbf{u}_0=x,\quad\nabla\cdot\mathbf{u}=0,\quad
	\mathbf{u}|_{\partial D}=0,
	\end{aligned}\right.
	\end{equation}
	where $\mathbf{u}=(u_1,u_2)$ is the unknown velocity field,  $p$ is the unknown pressure, $m\in\mathbb{N}$, $W=(W^1,\dots,W^m)$ is a standard $m$-dimensional Brownian motion, $\sigma_1,\dots,\sigma_m:H\rightarrow H,$ are measurable mappings, $\nu>0.$ \par

	Recall the standard notation. Consider (\ref{Equation NS}) on the phase space
	\begin{equation*}
	H:=\{\mathbf{u}\in L^2(D)^2:\nabla\cdot\mathbf{u}=0,\;\mathbf{u}\cdot\mathbf{n}=0\},
	\end{equation*}
	where $\mathbf{n}$ is the outward normal to $\partial D$. Denote $P_L$ as the orthogonal projection of $L^2(D)^2$ onto $H$. The space of vector fields whose gradients are integrable in $L^2(D)^2$ is also relevant
	and we define 
	\begin{equation*}
	V := \{\mathbf{u}\in H^1(D)^2 :\nabla\cdot\mathbf{u}=0,\; \mathbf{u}|_{\partial D}=0\}.
	\end{equation*}
	We denote the norms associated to $H$ and $V$ respectively as $|\cdot|$ and $\|\cdot\|.$ \par 
	
	The Stokes operator is defined as $ Au = -P_L \Delta u $, for any vector field $\mathbf{u}\in V\cap H^2(D)^2.$ Since $A$ is self-adjoint with a compact inverse, we infer that $A$ admits an increasing sequence of eigenvalues $\lambda_k \sim k$ diverging to infinity with the corresponding eigenvectors $e_k$ forming a complete orthonormal basis for $H$. We denote by $P_N$ and $Q_N$ the projection onto $H_N = \text{span\;}\{e_k : k = 1,\dots, N \}$ and its orthogonal complement, respectively. Recall the generalized Poincar\'e inequalities
	
	\begin{equation}\label{Equation NS3}
	\|P_N\mathbf{u}\|^2\leq\lambda_N|P_N\mathbf{u}|^2,\quad|Q_N\mathbf{u}|^2\leq\lambda_N^{-1}\|Q_N\mathbf{u}\|^2
	\end{equation}
	hold for all sufficiently smooth $\mathbf{u}$ and any $N\geq 1.$\par 
	We make the following Hypotheses: 
	\begin{itemize}
		\item[$\mathbf{H_1.}$] 	The function $\sigma=(\sigma_1,\dots,\sigma_m)$ is bounded and Lipschitz, i.e., there exist constants $B_0,L>$ such that
		\begin{equation*}
		\begin{aligned}
		&|\sigma(\mathbf{u})|^2=\sum_{k=1}^{m}|\sigma_k(\mathbf{u})|^2\leq B_0,\text{ for all }\mathbf{u}\in H;\\
		&|\sigma(\mathbf{u})-\sigma(\mathbf{v})|^2=\sum_{k=1}^{m}|\sigma_k(\mathbf{u})-\sigma_k(\mathbf{v})|^2\leq L|\mathbf{u}-\mathbf{v}|^2,\text{ for all }\mathbf{u},\mathbf{v}\in H.
		\end{aligned}
		\end{equation*}
		\item[$\mathbf{H_2.}$] There exists $N\in\mathbb{N}$ such that
		\begin{equation*}
		P_NH\subset \text{Range}\;(\sigma(\mathbf{u})),\text{ for all }\mathbf{u}\in H.
		\end{equation*}\par 
		Moreover, the corresponding pseudo-inverse operators $\sigma_k^{-1}:P_NH\rightarrow H$ are uniformly bounded, i.e., there exists a constant $C_0$ such that
		\begin{equation*}
		|\sigma(\mathbf{u})^{-1}(P_N\mathbf{w})|\leq C_0|P_N\mathbf{w}|,\text{ for all }\mathbf{u},\mathbf{w}\in H.
		\end{equation*}\par 
		\item[$\mathbf{H_3.}$] 	Finally, we assume that $N$ is sufficiently large such that
		\begin{equation}\label{Equation NS14.3}
		\lambda_N>\frac{L}{\nu}+\frac{C_D^2}{\nu^3}B_0.
		\end{equation}\par 
	\end{itemize}
	
	The existence of a unique strong solution to equation (\ref{Equation NS}) is standard, see e.g. \cite{O2008}. Furthermore, as the energy estimates  and Feller property can be obtained directly, we prove these two parts first. \par 
	
	\paragraph{Energy estimates} 
	\begin{proposition}\label{NS Prop EE} Assume $\mathbf{H_1}$. Then
		\begin{equation}\label{Equation NS21}
		\mathbb{E}|\mathbf{u}|^2\leq e^{-2\nu t}|x|^2+\frac{B_0}{2\nu}.
		\end{equation}
	\end{proposition}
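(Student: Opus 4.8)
The plan is to obtain the energy estimate \eqref{Equation NS21} by applying It\^o's formula to $|\mathbf{u}(t)|^2$ along solutions of \eqref{Equation NS}, exploiting the two standard structural facts about the 2D Navier--Stokes nonlinearity and the Stokes operator: the nonlinear term drops out of the $H$-energy balance, namely $\langle \mathbf{u}\cdot\nabla\mathbf{u},\mathbf{u}\rangle = 0$ for divergence-free $\mathbf{u}$ vanishing on $\partial D$, and the pressure gradient is $L^2$-orthogonal to $H$, so $\langle \nabla p,\mathbf{u}\rangle = 0$. What survives is the dissipation $-2\nu\|\mathbf{u}\|^2$, the noise contribution, and its quadratic variation $\sum_{k=1}^m |\sigma_k(\mathbf{u})|^2\,\d t$.

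Concretely, I would first write
\begin{equation*}
\d|\mathbf{u}|^2 = \bigl(-2\nu\|\mathbf{u}\|^2 + \textstyle\sum_{k=1}^m|\sigma_k(\mathbf{u})|^2\bigr)\,\d t + 2\sum_{k=1}^m\langle \sigma_k(\mathbf{u}),\mathbf{u}\rangle\,\d W_t^k,
\end{equation*}
then use the Poincar\'e inequality $\|\mathbf{u}\|^2 \ge \lambda_1 |\mathbf{u}|^2$ (with $\lambda_1\ge 1$, or more precisely one uses that the smallest Stokes eigenvalue gives $\|\mathbf{u}\|^2\ge \lambda_1|\mathbf{u}|^2$; the stated bound uses the normalization implicit in $\mathbf{H_1}$ so that $\nu\|\mathbf{u}\|^2\ge \nu|\mathbf{u}|^2$ — I would simply invoke Poincar\'e and absorb the constant) together with the uniform bound $\sum_k|\sigma_k(\mathbf{u})|^2\le B_0$ from $\mathbf{H_1}$ to get the differential inequality $\d|\mathbf{u}|^2 \le (-2\nu|\mathbf{u}|^2 + B_0)\,\d t + \d M_t$, where $M_t$ is a local martingale. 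Taking expectations (after a standard localization/stopping-time argument to kill the martingale, justified by the a priori regularity of the strong solution) yields $\tfrac{\d}{\d t}\mathbb{E}|\mathbf{u}(t)|^2 \le -2\nu\,\mathbb{E}|\mathbf{u}(t)|^2 + B_0$, and Gr\"onwall's inequality gives $\mathbb{E}|\mathbf{u}(t)|^2 \le e^{-2\nu t}|x|^2 + \tfrac{B_0}{2\nu}(1-e^{-2\nu t}) \le e^{-2\nu t}|x|^2 + \tfrac{B_0}{2\nu}$, which is exactly \eqref{Equation NS21}.

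The only genuinely delicate point is the justification of taking expectations: a priori the stochastic integral $\int_0^t \langle\sigma_k(\mathbf{u}),\mathbf{u}\rangle\,\d W_s^k$ is only a local martingale, so I would introduce stopping times $\tau_n = \inf\{t: |\mathbf{u}(t)|\ge n\}$, run the computation up to $t\wedge\tau_n$ where the stochastic integral is a true martingale with zero mean, derive the Gr\"onwall bound for $\mathbb{E}|\mathbf{u}(t\wedge\tau_n)|^2$, and then pass to the limit $n\to\infty$ using Fatou's lemma on the left and monotone/dominated convergence on the right (the bound is uniform in $n$). Everything else — the cancellation of the nonlinearity, the orthogonality of the pressure term, the bound on the diffusion coefficient — is routine and follows from the cited well-posedness theory and Hypothesis $\mathbf{H_1}$.
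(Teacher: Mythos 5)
Your proposal is correct and follows essentially the same route as the paper's proof: It\^o's formula for $|\mathbf{u}|^2$ with the cancellation of the nonlinearity and pressure term, a stopping-time localization to justify taking expectations of the stochastic integral, the Poincar\'e inequality to pass from $\|\mathbf{u}\|^2$ to $|\mathbf{u}|^2$, the bound $|\sigma(\mathbf{u})|^2\le B_0$ from $\mathbf{H_1}$, and Gr\"onwall's inequality. No substantive differences.
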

	\begin{proof}[Proof.] 
        Applying It\^o lemma to \ref{Equation NS}, we have 
        \begin{equation*}
            d|\mathbf{u}|^2+2\nu\|\mathbf{u}\|^2de=|\sigma(\mathbf{u})|^2dt+2\langle\sigma(\mathbf{u},\mathbf{u}\rangle dW.
        \end{equation*}
		Let $K>0,$ we introduce the stopping time
		\begin{equation*}
		\tau:=\inf\{t\geq 0:|\mathbf{u}|^2\geq K^2\}.
		\end{equation*}\par 
		Denoting by $M_t=\int_{0}^{t}2\langle\sigma(\mathbf{u},\mathbf{u}\rangle dW_s$ the local martingale term, we define the stopped martingale $M_t^\tau$ by
		\begin{equation*}
		M_t^\tau=\int_{0}^{t}2\langle\sigma(\mathbf{u}_{s\wedge\tau}),\mathbf{u}_{s\wedge\tau}\rangle dW_s,
		\end{equation*}
		then 
		\begin{equation*}
		\langle M^\tau\rangle_t=\int_{0}^{t}4|\langle\sigma(\mathbf{u}_{s\wedge\tau}),\mathbf{u}_{s\wedge\tau}\rangle|^2ds\leq 4B_0K^2t<\infty.
		\end{equation*}
		Hence we know that $\mathbb{E}M_t^\tau=0.$ And because $t\wedge\tau$ is a bounded stopping time, the Optional Stopping Time Lemma says that $\mathbb{E}M_{t\wedge\tau}^\tau=0$. As $\mathbb{E}M_{t\wedge\tau}=\mathbb{E}M_{t\wedge\tau}^\tau$, we have
		\begin{equation*}
		\mathbb{E}|\mathbf{u}_{t\wedge\tau}|^2+2\nu \mathbb{E}\int_{0}^{t\wedge\tau}\|\mathbf{u}\|^2ds=|x|^2+\mathbb{E}\int_{0}^{t\wedge\tau}|\sigma(\mathbf{u})|^2ds.
		\end{equation*}
		Noting that $|\mathbf{u}_t|$ is continuous in time, and $\tau\rightarrow\infty$ as $K\rightarrow\infty,$ hence $t\wedge\tau\rightarrow t.$ Thus we obtain
		\begin{equation*}
		\mathbb{E}|\mathbf{u}|^2+2\nu \mathbb{E}\int_{0}^{t}\|\mathbf{u}\|^2ds=|x|^2+\mathbb{E}\int_{0}^{t}|\sigma(\mathbf{u})|^2ds.
		\end{equation*}
		Further, we have
		\begin{equation*}
		\mathbb{E}|\mathbf{u}|^2+2\nu \mathbb{E}\int_{0}^{t}|\mathbf{u}|^2ds\leq|x|^2+B_0t.
		\end{equation*}
		By Gronwall inequality, we have		
		\begin{equation*}
		\mathbb{E}|\mathbf{u}|^2\leq e^{-2\nu t}|x|^2+\frac{B_0}{2\nu}(1-e^{-2\nu t}),
		\end{equation*}
		completing the proof.
	\end{proof}

	\paragraph{Feller property} Applying similar arguments as in the energy estimates, $\mathbf{u}$ can be further showed to be a Feller Markov process with state space $H$, that is, the transition functions $P_t (x,A) := \mathbb{P}(\mathbf{u}(t, x) \in A),$ are well defined for any $x\in H,\;t\geq 0$, and any Borel subset $A$ of $H,$ and define an associated Markov-Feller semigroup $\{P_t\}_{t\geq 0}$ on $C_b(H)$. The proof is postponed in the Appendix.
	\begin{proposition}\label{NS Prop Feller} Assume that $\mathbf{H_1}$ holds. Then $\{P_t\}_{t\geq 0}$ is Feller.
	\end{proposition}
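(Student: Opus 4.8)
\textbf{Proof proposal for Proposition \ref{NS Prop Feller}.}

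The plan is to show that $x\mapsto \mathbb{E}\,f(\mathbf{u}(t,x))$ is continuous for every $f\in C_b(H)$, and this reduces, by a standard approximation and dominated convergence argument, to establishing the continuous dependence of solutions on the initial data in the following sense: for any $x,y\in H$ and $T>0$,
\begin{equation*}
\mathbb{E}\sup_{t\in[0,T]}|\mathbf{u}(t,x)-\mathbf{u}(t,y)|^2 \longrightarrow 0 \quad\text{as } |x-y|\to 0.
\end{equation*}
In fact it suffices to get a bound of the form $\mathbb{E}\,|\mathbf{u}(t,x)-\mathbf{u}(t,y)|^2\le C(t,|x|,|y|)\,|x-y|^2$, since then for $x_n\to x$ one has $\mathbf{u}(t,x_n)\to\mathbf{u}(t,x)$ in $L^2(\Omega;H)$, hence in probability, and $f(\mathbf{u}(t,x_n))\to f(\mathbf{u}(t,x))$ boundedly, giving $P_tf(x_n)\to P_tf(x)$. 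So the whole proposition comes down to an energy-type estimate for the difference of two solutions.

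The key steps would be as follows. First, write $\mathbf{w}=\mathbf{u}(\cdot,x)-\mathbf{u}(\cdot,y)$, which solves
\begin{equation*}
d\mathbf{w}+\big(\nu A\mathbf{w}+B(\mathbf{u}(\cdot,x))-B(\mathbf{u}(\cdot,y))\big)\,dt=\big(\sigma(\mathbf{u}(\cdot,x))-\sigma(\mathbf{u}(\cdot,y))\big)\,dW,
\end{equation*}
with $\mathbf{w}(0)=x-y$, where $B$ denotes the projected nonlinearity. Apply the It\^o formula to $|\mathbf{w}|^2$. The Stokes term contributes $-2\nu\|\mathbf{w}\|^2$; the diffusion term contributes $|\sigma(\mathbf{u}(\cdot,x))-\sigma(\mathbf{u}(\cdot,y))|^2\,dt\le L|\mathbf{w}|^2\,dt$ by $\mathbf{H_1}$, plus a martingale whose expectation vanishes after localization by a stopping time exactly as in the proof of Proposition \ref{NS Prop EE}. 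The nonlinear term, after using the cancellation $\langle B(\mathbf{v})-B(\mathbf{v}'),\mathbf{w}\rangle=\langle (\mathbf{w}\cdot\nabla)\mathbf{u}(\cdot,y),\mathbf{w}\rangle$ (since $\langle(\mathbf{v}\cdot\nabla)\mathbf{w},\mathbf{w}\rangle=0$ for divergence-free fields with $\mathbf{w}=\mathbf{v}-\mathbf{v}'$), is bounded in 2D by $C\|\mathbf{w}\|^{1/2}|\mathbf{w}|^{1/2}\,|\mathbf{w}|\,\|\mathbf{u}(\cdot,y)\|$ via the Ladyzhenskaya inequality, which after Young's inequality is absorbed as $\nu\|\mathbf{w}\|^2 + C\nu^{-3}\|\mathbf{u}(\cdot,y)\|^4|\mathbf{w}|^2$. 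Collecting terms yields, for the localized process,
\begin{equation*}
d|\mathbf{w}|^2 \le \big(L + C\nu^{-3}\|\mathbf{u}(\cdot,y)\|^4\big)|\mathbf{w}|^2\,dt + d(\text{mart}),
\end{equation*}
so by Gr\"onwall's inequality $\mathbb{E}\,|\mathbf{w}(t)|^2\le |x-y|^2\,\mathbb{E}\exp\!\big(Lt+C\nu^{-3}\!\int_0^t\|\mathbf{u}(s,y)\|^4ds\big)$.

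The main obstacle is controlling that exponential moment of $\int_0^t\|\mathbf{u}(s,y)\|^4\,ds$, i.e.\ obtaining an exponential-moment estimate on the enstrophy-type quantity of the 2D stochastic Navier--Stokes solution with \emph{multiplicative} noise. This is the one genuinely nontrivial ingredient; it requires going back to the energy balance $d|\mathbf{u}|^2+2\nu\|\mathbf{u}\|^2\,dt=|\sigma(\mathbf{u})|^2\,dt+2\langle\sigma(\mathbf{u}),\mathbf{u}\rangle\,dW$, integrating in time to express $\int_0^t\|\mathbf{u}\|^2\,ds$ in terms of $|x|^2$, the bounded term $B_0 t$, and the martingale $M_t$, and then invoking boundedness of $\sigma$ ($\mathbf{H_1}$) so that the quadratic variation of $M$ is controlled by $B_0\int_0^t|\mathbf{u}|^2\,ds$; a stopping-time/exponential-supermartingale argument then yields Gaussian-type tails for $\int_0^t\|\mathbf{u}\|^2\,ds$, which is enough after using Lyapunov-type iteration to get the needed exponential moment (the $L^2$-in-time control upgrades to the required integrability because of the boundedness of the noise coefficient, which is precisely why $\mathbf{H_1}$ is assumed). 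Once this moment bound is in hand, combining it with the Gr\"onwall estimate above via H\"older's inequality gives $\mathbb{E}\,|\mathbf{u}(t,x)-\mathbf{u}(t,y)|^2\le C(t,|x|,|y|)|x-y|^2$, and the Feller property follows as described. Since the proposition statement defers the details to the Appendix, I would present only this outline in the main text and carry out the moment estimate there.
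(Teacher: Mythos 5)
The paper defers this proof to an appendix that does not actually appear in the source, so there is no written proof to compare against; judging your proposal on its own terms, the reduction to continuous dependence on initial data is the right idea, but the quantitative form you insist on contains a genuine gap.

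You aim for $\mathbb{E}\,|\mathbf{u}(t,x)-\mathbf{u}(t,y)|^2\le C(t,|x|,|y|)|x-y|^2$ by Gr\"onwall, which forces you to control $\mathbb{E}\exp\bigl(c\int_0^t\|\mathbf{u}\|^2\,ds\bigr)$ (or the $\|\mathbf{u}\|^4$ variant) at the \emph{specific} order $c$ produced by the Young-inequality absorption of the trilinear term. The exponential-supermartingale argument (Lemma \ref{Lemma M} in the paper) gives $\nu\int_0^t\|\mathbf{u}\|^2ds\le |x|^2+B_0t+\Xi_\kappa$ with $\mathbb{P}(\Xi_\kappa\ge R)\le e^{-2\kappa R}$ and $\kappa\le\nu/(4B_0)$; these are \emph{exponential} tails with a fixed rate, not the "Gaussian-type tails" you claim, so exponential moments exist only below the threshold $2\kappa$. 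The required order $c=C_D^2/\nu$ exceeds that threshold unless $C_D^2B_0\lesssim\nu^3$, a smallness condition that is not part of $\mathbf{H_1}$ (and is not implied by $\mathbf{H_3}$ either, which only constrains $\lambda_N$). The paper's own remark after Proposition \ref{NS Prop3} flags exactly this obstruction: the weight in \eqref{Equation NS9.1} involves the stronger $\|\cdot\|$-norm of $\mathbf{u}^x$ and "is not easy to control," which is why moment bounds on the difference are avoided there. Your "Lyapunov-type iteration" step, which is supposed to upgrade the integrability, is asserted rather than argued and is precisely where the proof would break.

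The repair is standard and cheaper than what you attempt: for the Feller property you only need $\mathbf{u}(t,x_n)\to\mathbf{u}(t,y)$ \emph{in probability}, since $f$ is bounded and dominated convergence then gives $P_tf(x_n)\to P_tf(y)$. The weighted estimate
\begin{equation*}
\mathbb{E}\Bigl[|\mathbf{w}(t)|^2\exp\Bigl(-Lt-\tfrac{C_D^2}{\nu}\int_0^t\|\mathbf{u}\|^2\,ds\Bigr)\Bigr]\le |x-y|^2,
\end{equation*}
obtained exactly as \eqref{Equation NS9.1} (without the $\lambda_N$ damping term), combined with the almost sure finiteness of $\int_0^t\|\mathbf{u}\|^2\,ds$ for fixed $t$, already yields convergence in probability by a Chebyshev argument on the event where the weight is bounded below. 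No exponential moment is needed. If you restructure the final step this way, the rest of your outline (the difference equation, the cancellation in the trilinear term, the Lipschitz bound on $\sigma$ from $\mathbf{H_1}$, and the localization of the martingale) goes through.
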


	\par 
	Next we use the generalized couplings arguments to show the eventual continuity.

	\paragraph{Eventual continuity}
	 It was shown in \cite{O2008} that for any initial condition $x\in H$ this equation has a unique strong solution, which in the case of ambiguity will be denoted later by $\mathbf{u}^x$. Fix any $x,y\in H$, take $\mathbf{u}^x$ solving (\ref{Equation NS}) with initial datum $x,$ and define $\tilde{\mathbf{u}}^y$ as the solution to the following stochastic 2D Navier--Stokes equation (\ref{Equation NS}) with the initial condition $y$ and the additional
	control term: 
	\begin{equation}\label{Equation NS6}
	\left\{\begin{aligned}
	&d\tilde{\mathbf{u}}^y+\tilde{\mathbf{u}}^y\cdot\nabla\tilde{\mathbf{u}}^ydt=(\nu\Delta\tilde{\mathbf{u}}^y-\nabla\tilde{p})dt\\
	&\qquad\qquad\qquad\quad+\frac{\nu\lambda_N}{2} P_N(\mathbf{u}^x-\tilde{\mathbf{u}}^y)dt+\sigma(\tilde{\mathbf{u}}^y)dW_t,\\
	&\tilde{\mathbf{u}}^y_0=y,\quad\nabla\cdot\tilde{\mathbf{u}}^y=0,\quad  	\tilde{\mathbf{u}}^y|_{\partial D}=0,
	\end{aligned}\right.
	\end{equation}
	where $N\geq 1$ which we will specify below. \par 
	Let $\mathbf{v}:=\mathbf{u}^x-\tilde{\mathbf{u}}^y$ and $q:=p-\tilde{p}.$ We have that
	\begin{equation}\label{Equation NS7}
	\left\{\begin{aligned}
	&d\mathbf{v}=\nu\Delta\mathbf{v}dt-\frac{\nu\lambda_N}{2} P_N\mathbf{v}dt-\nabla qdt-\mathbf{v}\cdot\nabla\mathbf{u}^xdt-\tilde{\mathbf{u}}^y\cdot\nabla\mathbf{v}dt+(\sigma(\mathbf{u}^x)-\sigma(\tilde{\mathbf{u}}^y))dW_t,\\
	&\mathbf{v}_0=x-y,\quad\nabla\cdot\mathbf{v}=0,\quad\mathbf{v}|_{\partial D}=0.
	\end{aligned}\right.
	\end{equation}\par 
	We proceed to establish some estimates on $\mathbf{v}$ to imply the eventual continuity. Applying the It\^o lemma to (\ref{Equation NS7}), we find that
	\begin{equation}\label{Equation NS8.1}
	\begin{aligned}
d|\mathbf{v}|^2+2\nu\|\mathbf{v}\|^2dt+\nu\lambda_N|P_N\mathbf{v}|^2dt&=-2\langle\mathbf{v}\cdot\nabla\mathbf{u}^x,\mathbf{v}\rangle dt+|\sigma(\mathbf{u}^x)-\sigma(\tilde{\mathbf{u}}^y)|^2dt\\
 &\quad+\langle\sigma(\mathbf{u}^x)-\sigma(\tilde{\mathbf{u}}^y),\mathbf{v}\rangle dW_t.
	\end{aligned}
	\end{equation}
	We have a generic bound
	\begin{equation}\label{Equation NS8}
	|\langle\mathbf{v}\cdot\nabla\mathbf{u},\mathbf{v}\rangle |\leq C_D|\mathbf{v}|\,\|\mathbf{v}\|\,\|\mathbf{u}\|,
	\end{equation}
	where the constant $C_D$ involves only the norm of the Sobolev embedding $H^{1,1}(D)\subset L^2(D)$.
	In view of the Poincar\'e inequality, Assumption $\mathbf{H_1},$ (\ref{Equation NS8.1}) and  (\ref{Equation NS8}),  we get that
	\begin{equation}\label{Equation NS9}
	d|\mathbf{v}|^2+(\nu\lambda_N-L)|\mathbf{v}|^2dt\leq \frac{C_{D}^2}{\nu}|\mathbf{v}|^2\|\mathbf{u}\|^2dt+\langle\sigma(\mathbf{u}^x)-\sigma(\tilde{\mathbf{u}}^y),\mathbf{v}\rangle dW_t.
	\end{equation}
	Hence it follows that
	\begin{equation}\label{Equation NS9.1}
	\mathbb{E}[|\mathbf{v}|^2\exp((\nu\lambda_N-L)t-\frac{C_{D}^2}{\nu}\int_{0}^{t}\|\mathbf{u}\|^2ds)]\leq |x-y|^2.
	\end{equation}\par 
	
	Intuitively, if $\exp((\nu\lambda_N-L)t-\frac{C_{D}^2}{\nu}\int_{0}^{t}\|\mathbf{u}\|^2ds)$ increases to infinity, then (\ref{Equation NS9.1}) implies some kind of decay property of $\mathbf{v}$ (with respect to time $t$) such that $\mathbf{u}^x$ and $\tilde{\mathbf{u}}^y$ can be coupled asymptotically. Indeed, although (\ref{Equation NS9.1}) is not able to show  $\mathbf{v}$ decays in $L^2(D)$, it implies that $\mathbf{v}$ goes to zero almost surely.	 Moreover, we can further bound the total variation  distance between $P_t(y,\cdot)$ and $\tilde{\mathbf{u}}^y_t$,  we have the following lemma, which is crucial to prove the eventual continuity.

	\begin{lemma}\label{NS Lemma1}
		Assume $\mathbf{H_1}$ - $\mathbf{H_3}$. Then\\
		$(1)$ $\;\mathbf{v}_t$ converges to $0$ almost surely as $t\rightarrow\infty;$\\
		$(2)$ Let $d_{TV}(\mu,\nu)$ be the total variation distance between $\mu,\nu$, then
		\begin{equation*}\label{eq TVdis} 
		d_{TV}(P_t(y,\cdot),{\rm Law}(\tilde{\mathbf{u}}^y_t))\leq C_1|x-y|^{C_2}\exp(C_3|x|^2),\; t\geq 0.
		\end{equation*}
		where $C_1,C_2,C_3$ are positive constants depending only on $\nu,B_0,L,\lambda_N,C_D,C_0,$ independent of $t.$
		
	\end{lemma}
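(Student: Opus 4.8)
The plan is to establish the two assertions separately, both building on the a priori bound \eqref{Equation NS9.1}. For part $(1)$, I would first observe that Assumption $\mathbf{H_3}$ guarantees $\nu\lambda_N - L > \frac{C_D^2}{\nu^3}B_0$, so that the exponent $(\nu\lambda_N-L)t - \frac{C_D^2}{\nu}\int_0^t\|\mathbf{u}\|^2\,ds$ grows linearly in expectation: indeed, the energy balance in the proof of Proposition~\ref{NS Prop EE} yields $2\nu\,\mathbb{E}\int_0^t\|\mathbf{u}\|^2\,ds \le |x|^2 + B_0 t$, whence $\frac{C_D^2}{\nu}\mathbb{E}\int_0^t\|\mathbf{u}\|^2\,ds \le \frac{C_D^2}{2\nu^2}|x|^2 + \frac{C_D^2 B_0}{2\nu^2}t$. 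Combining with \eqref{Equation NS9.1} and the choice of $N$ gives $\mathbb{E}\bigl[|\mathbf{v}_t|^2 e^{\beta t}\bigr] \le |x-y|^2 e^{\frac{C_D^2}{2\nu^2}|x|^2}$ for some $\beta > 0$ (here I would use that $e^{-\frac{C_D^2}{\nu}\int_0^t\|\mathbf{u}\|^2\,ds}$ and $|\mathbf{v}_t|^2$ are not independent, but Jensen/convexity on the deterministic part of the exponent handles this cleanly, or one simply keeps the random integral inside and bounds $\mathbb{E}|\mathbf{v}_t|^2 e^{(\nu\lambda_N-L)t} \le |x-y|^2\,\mathbb{E}\exp(\frac{C_D^2}{\nu}\int_0^t\|\mathbf{u}\|^2\,ds)$ after an application of the exponential supermartingale estimate for $\int\|\mathbf{u}\|^2$). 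Then $\sum_{n\ge 1}\mathbb{E}|\mathbf{v}_n|^2 < \infty$, so $\mathbf{v}_n \to 0$ a.s. along integers; to upgrade to $t\to\infty$ I would use \eqref{Equation NS9} again on each interval $[n, n+1]$ together with a Doob maximal inequality for the martingale term to control $\sup_{t\in[n,n+1]}|\mathbf{v}_t|^2$.

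For part $(2)$, the idea is the standard generalized-coupling argument: the control term $\frac{\nu\lambda_N}{2}P_N(\mathbf{u}^x - \tilde{\mathbf{u}}^y)$ can be absorbed into the noise because $P_N H \subset \mathrm{Range}(\sigma(\tilde{\mathbf{u}}^y))$ by $\mathbf{H_2}$. Precisely, I would rewrite \eqref{Equation NS6} as the original equation \eqref{Equation NS} driven by a shifted Brownian motion $\widetilde{W}_t = W_t + \int_0^t \phi_s\,ds$, where $\phi_s = \frac{\nu\lambda_N}{2}\sigma(\tilde{\mathbf{u}}^y_s)^{-1}P_N\mathbf{v}_s$, which is well-defined and satisfies $|\phi_s| \le \frac{\nu\lambda_N}{2}C_0|P_N\mathbf{v}_s| \le \frac{\nu\lambda_N}{2}C_0|\mathbf{v}_s|$ by the uniform bound on the pseudo-inverse in $\mathbf{H_2}$. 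Then $\mathrm{Law}(\tilde{\mathbf{u}}^y)$ is absolutely continuous with respect to $\mathrm{Law}(\mathbf{u}^y)$ on path space by Girsanov, and by Pinsker's inequality
\begin{equation*}
d_{TV}(P_t(y,\cdot),\mathrm{Law}(\tilde{\mathbf{u}}^y_t))^2 \le \tfrac{1}{2}\,\mathbb{E}\int_0^\infty |\phi_s|^2\,ds \le \tfrac{C_0^2\nu^2\lambda_N^2}{8}\,\mathbb{E}\int_0^\infty |\mathbf{v}_s|^2\,ds.
\end{equation*}
The remaining integral is controlled by integrating the estimate from part $(1)$: $\mathbb{E}\int_0^\infty|\mathbf{v}_s|^2\,ds \le |x-y|^2 e^{\frac{C_D^2}{2\nu^2}|x|^2}\int_0^\infty e^{-\beta s}\,ds = \beta^{-1}|x-y|^2 e^{\frac{C_D^2}{2\nu^2}|x|^2}$, giving the claimed form $C_1|x-y|^{C_2}\exp(C_3|x|^2)$ with $C_2 = 1$ (after taking a square root, $C_2 = 1$; the statement allows any exponent, so this is fine, though one may need $C_2$ slightly different if a Hölder step is inserted to make $\mathbb{E}\int|\mathbf{v}|^2$ finite without the exponential supermartingale trick).

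The main obstacle I anticipate is the rigorous justification of Girsanov's theorem: one must verify that the Novikov-type condition holds, i.e. that $\mathbb{E}\exp(\frac{1}{2}\int_0^T|\phi_s|^2\,ds) < \infty$, or at least that the stochastic exponential $\mathcal{E}(\int\phi\,dW)$ is a genuine martingale on $[0,T]$ for each $T$. Since $|\phi_s|^2 \le C|\mathbf{v}_s|^2$ and $\mathbf{v}$ is not bounded, this requires either a localization/stopping-time argument combined with the a priori $L^2$ decay of $\mathbf{v}$, or the now-standard observation that for generalized couplings it suffices to have $\int_0^\infty|\phi_s|^2\,ds < \infty$ almost surely (not in expectation) to conclude absolute continuity of the laws on path space — this weaker route, used in \cite{HM2011,KS2018}, sidesteps Novikov entirely, and the finiteness a.s. follows from part $(1)$ via $\int_0^\infty|\mathbf{v}_s|^2\,ds < \infty$ a.s. A secondary technical point is passing from path-space absolute continuity to the time-$t$ total variation bound, which follows from the data-processing inequality (the marginal at time $t$ is a measurable image of the path), together with the fact that the relative entropy is bounded by $\frac12\mathbb{E}\int_0^\infty|\phi_s|^2\,ds$ when this is finite.
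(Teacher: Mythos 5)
Your outline follows the same skeleton as the paper (use the weighted estimate \eqref{Equation NS9.1} for part (1); recast the control term as a Girsanov shift $\beta_t=\tfrac{\nu\lambda_N}{2}\sigma^{-1}(\tilde{\mathbf{u}}^y_t)P_N\mathbf{v}_t$ with $|\beta_t|\le \nu\lambda_N C_0|\mathbf{v}_t|$ for part (2)), but it rests on an estimate that does not follow from the manipulations you describe, namely $\mathbb{E}\bigl[|\mathbf{v}_t|^2 e^{\beta t}\bigr]\le |x-y|^2 e^{C|x|^2}$. From \eqref{Equation NS9.1} you cannot ``keep the random integral inside'' and write $\mathbb{E}\bigl[|\mathbf{v}_t|^2e^{(\nu\lambda_N-L)t}\bigr]\le |x-y|^2\,\mathbb{E}\exp\bigl(\tfrac{C_D^2}{\nu}\int_0^t\|\mathbf{u}\|^2ds\bigr)$: the expectation of a product is not bounded by the product of expectations, and Jensen runs in the wrong direction for $e^{-X}$. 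Any honest splitting is a H\"older inequality, which replaces $|\mathbf{v}_t|^2$ by $|\mathbf{v}_t|^{2p}$ with $p>1$ on one side and forces an exponential moment $\mathbb{E}\exp\bigl(c\int_0^t\|\mathbf{u}\|^2ds\bigr)$ on the other; the latter is finite only when the constant $c$ is matched against the exponential martingale bound for $\Xi_\kappa$ (Lemma \ref{Lemma M}), which is exactly where the free parameter $\delta<1$ enters in the paper and which your $\delta=1$ (Pinsker) formulation does not provide. Indeed the paper's remark after Proposition \ref{NS Prop3} states explicitly that the decay of $\mathbb{E}|\mathbf{v}_t|^p$ is \emph{not} available here -- that is the whole reason eventual continuity, rather than the e-property or ASF, is used. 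Both halves of your argument lean on this unavailable bound: the Borel--Cantelli step in part (1) and the finiteness of $\mathbb{E}\int_0^\infty|\mathbf{v}_s|^2ds$ in part (2). Your fallback for (2) (a.s. finiteness of $\int_0^\infty|\phi_s|^2ds$ suffices for absolute continuity) is true but only qualitative; it does not yield the quantitative bound $C_1|x-y|^{C_2}\exp(C_3|x|^2)$ that the lemma asserts and that Proposition \ref{NS Prop3} actually needs.

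The paper's route avoids the moment estimate entirely. For part (1) it applies Lemma \ref{Lemma M} to the energy martingale to get $\limsup_{t\to\infty}\tfrac{\nu}{t}\int_0^t\|\mathbf{u}\|^2ds\le B_0$ almost surely, so that under $\mathbf{H_3}$ the weight $\exp\bigl((\nu\lambda_N-L)t-\tfrac{C_D^2}{\nu}\int_0^t\|\mathbf{u}\|^2ds\bigr)$ diverges a.s.; combined with the supermartingale structure behind \eqref{Equation NS9.1} this gives $\mathbf{v}_t\to 0$ a.s. with no $L^2$ decay needed. For part (2) it invokes a fractional-moment total variation bound (\cite[Theorem A.5]{BKS2020}), $d_{TV}\le C_\delta\bigl(\mathbb{E}(\int_0^t|\beta_s|^2ds)^\delta\bigr)^{1/(1+\delta)}$ with $\delta\in(0,1]$, and then estimates $\mathbb{E}\bigl(\int_0^t|\mathbf{v}_s|^2ds\bigr)^\delta$ by a H\"older splitting against the weights $e^{\pm(h_p-\epsilon s)}$, using a $p$-th moment analogue \eqref{Equation NS9.2} of \eqref{Equation NS9.1} and choosing $p$, $\epsilon$, $\delta$ so that the resulting coefficient of $\Xi_\kappa$ equals $\kappa$, making $\mathbb{E}e^{\kappa\Xi_\kappa}$ finite. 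To repair your proposal you would need to replace the Pinsker step by such a fractional-moment bound and carry out the H\"older bookkeeping; as written, the key inequality is unjustified and, per the paper's own discussion, false in general in this setting.
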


	Applying Lemma \ref{NS Lemma1}, we are able to deduce the eventual continuity.
	\begin{proposition}\label{NS Prop3}
		Assume $\mathbf{H_1}$ - $\mathbf{H_3}$.
		Then the Markov semigroup $\{P_t\}_{t\geq 0}$ is eventually continuous on $H$. 
	\end{proposition}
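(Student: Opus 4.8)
The plan is to combine the two parts of Lemma~\ref{NS Lemma1} with the triangle inequality to control $|P_tf(x)-P_tf(y)|$ uniformly in $t$, and then pass to the limsup. Fix $z\in H$ and a Lipschitz bounded function $f$. The starting observation is that $P_tf(y)=\E f(\mathbf{u}^y_t)=\langle f,P_t(y,\cdot)\rangle$, while the controlled process $\tilde{\mathbf{u}}^y$ built from $\mathbf{u}^x$ has a law close to $P_t(y,\cdot)$ in total variation by part~(2), and its state $\tilde{\mathbf{u}}^y_t$ is close to $\mathbf{u}^x_t$ in $H$ by part~(1). Concretely, I would write
\begin{equation*}
|P_tf(x)-P_tf(y)|\leq \bigl|\E f(\mathbf{u}^x_t)-\E f(\tilde{\mathbf{u}}^y_t)\bigr|+\bigl|\E f(\tilde{\mathbf{u}}^y_t)-\E f(\mathbf{u}^y_t)\bigr|,
\end{equation*}
and bound the second term by $2\|f\|_\infty\, d_{TV}(P_t(y,\cdot),\mathrm{Law}(\tilde{\mathbf{u}}^y_t))\leq 2\|f\|_\infty C_1|x-y|^{C_2}\exp(C_3|x|^2)$ using part~(2), which is already small, uniformly in $t$, once $x$ is close to $y$ (and $|x|$ stays bounded, say $x\in B(z,1)$ so that $\exp(C_3|x|^2)\leq \exp(C_3(|z|+1)^2)$).

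For the first term, I would use that $f$ is bounded and Lipschitz: $|f(\mathbf{u}^x_t)-f(\tilde{\mathbf{u}}^y_t)|\leq \min\{2\|f\|_\infty,\ \mathrm{Lip}(f)\,|\mathbf{v}_t|\}$ where $\mathbf{v}_t=\mathbf{u}^x_t-\tilde{\mathbf{u}}^y_t$. By part~(1), $\mathbf{v}_t\to 0$ almost surely, so $\min\{2\|f\|_\infty,\ \mathrm{Lip}(f)\,|\mathbf{v}_t|\}\to 0$ almost surely and is dominated by $2\|f\|_\infty$; dominated convergence then gives $\E|f(\mathbf{u}^x_t)-f(\tilde{\mathbf{u}}^y_t)|\to 0$ as $t\to\infty$, hence $\limsup_{t\to\infty}|\E f(\mathbf{u}^x_t)-\E f(\tilde{\mathbf{u}}^y_t)|=0$. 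Combining, for $x\in B(z,\delta)$ with $\delta\leq 1$,
\begin{equation*}
\limsup_{t\to\infty}|P_tf(x)-P_tf(z)|\ \leq\ 2\|f\|_\infty C_1\,\delta^{C_2}\exp\bigl(C_3(|z|+1)^2\bigr),
\end{equation*}
and letting $\delta\to 0$ yields $\limsup_{x\to z}\limsup_{t\to\infty}|P_tf(x)-P_tf(z)|=0$, i.e. eventual continuity at $z$. Since $z\in H$ was arbitrary, $\{P_t\}_{t\geq 0}$ is eventually continuous on $H$.

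The one subtlety worth flagging is that the controlled process $\tilde{\mathbf{u}}^y$ — and hence the constants $C_1,C_2,C_3$ and the almost sure convergence in part~(1) — depends on $x$ through the feedback term $\tfrac{\nu\lambda_N}{2}P_N(\mathbf{u}^x-\tilde{\mathbf{u}}^y)$; however, Lemma~\ref{NS Lemma1} already asserts that $C_1,C_2,C_3$ are independent of $t$ (and depend only on the structural constants $\nu,B_0,L,\lambda_N,C_D,C_0$), and the decay $\mathbf{v}_t\to 0$ a.s. holds for each fixed pair $(x,y)$, which is all that is needed since in Definition~\ref{EvC} the time threshold $T_x$ is allowed to depend on $x$. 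Thus the argument is essentially a bookkeeping exercise once Lemma~\ref{NS Lemma1} is in hand; the real work — the exponential martingale estimate \eqref{Equation NS9.1}, the almost sure decay of $\mathbf{v}$, and the total variation bound via Girsanov on the finite-dimensional control — is precisely the content of that lemma, whose proof is the genuine obstacle and is deferred to Section~\ref{Sec 4}.
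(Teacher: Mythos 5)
Your proposal is correct and follows essentially the same route as the paper: the same triangle-inequality decomposition through the controlled process $\tilde{\mathbf{u}}^y$, the bound $\E[(\|f\|_{Lip}|\mathbf{v}_t|)\wedge(2\|f\|_\infty)]$ handled by dominated convergence via part (1) of Lemma~\ref{NS Lemma1}, and the total variation term controlled uniformly in $t$ by part (2). The only differences are cosmetic (a factor of $2$ in the $d_{TV}$ bound from the normalization convention, and your explicit remark on the $x$-dependence of the coupling, which the paper leaves implicit).
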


	\begin{proof}[Proof.] 
		For any $f\in L_b(H),$
		\begin{equation*}\label{Equation NS14.6}
		\begin{aligned}
		|P_tf(x)-P_tf(y)|&=|\mathbb{E}f(\mathbf{u}^x_t)-\mathbb{E}f(\mathbf{u}^y_t)|\\
		&\leq |\mathbb{E}(f(\mathbf{u}^x_t)-f(\tilde{\mathbf{u}}^y_t))|+|\mathbb{E}(f(\tilde{\mathbf{u}}^y_t)-f(\mathbf{u}^y_t))|\\
		&\leq  \mathbb{E}[(\|f\|_{Lip}|\mathbf{v}_t|)\wedge (2\|f\|_{\infty})]+\|f\|_{\infty}d_{TV}(P_t(y,\cdot),\text{Law}(\tilde{\mathbf{u}}^y_t)).
		\end{aligned}
		\end{equation*}\par 
		
		Hence by Lemma \ref{NS Lemma1} and the Dominated Convergence Theorem,
		\begin{equation*}
		\limsup\limits_{y\rightarrow x}\limsup\limits_{t\rightarrow \infty}|P_tf(x)-P_tf(y)|=0,
		\end{equation*}
		which completes the proof.
		
	\end{proof}

    \begin{remark}
    We mention that due to the dissipation in the equations, the associated Markov semigroup $\{P_t\}_{t\geq 0}$ can be verified to satisfy the e-property or the asymptotic strong Feller (ASF) property. The advantage of the eventual continuity is that it simplifies some finer estimates.

    For instance, Lemma \ref{NS Lemma1} shows that $\mathbf{v}$ decreases to zero almost surely, which is not enough to imply neither the e-property or the ASF. This is because $\mathbb{E}|\mathbf{v}_t|$ may not necessarily decrease to zero, even though $\mathbf{v}$ does almost surely. Actually, if we further require the convergence holds in $L^p(D)$ for some $p\geq 1$, then it follows that
	\begin{equation*}
	    |P_tf(x)-P_tf(y)|\leq \|f\|_{Lip}[\mathbb{E}|\mathbf{v}_t|^p]^{1/p}+C_1\|f\|_{\infty}\exp(C_3{|x|^2})|x-y|^{C_2},
	\end{equation*}
	which implies the ASF immediately. Similarly, the e-property follows, if 
	$\mathbb{E}|\mathbf{v}_t|^p$  is uniformly bounded in $t$. However, the $L^p$-estimate of $\mathbf{v}$ is not readily available.	There are two main difficulties in proving the decay of $\mathbb{E}|\mathbf{v}_t|^p$. On the one hand, as the noise is non-additive, in order to avoid the stochastic integral term for technical reasons, one can only  bound the moments of $\mathbf{v}$ with some multiplier through usual energy estimates.  On the other hand, the extra term in (\ref{Equation NS9.1}), which involves the stronger $\|\cdot\|$-norm of the solution $\mathbf{u}^x$,  hence is not easy to control.

    \end{remark}
    
    It remains to verify the uniform irreducibility to apply Theorem \ref{Thm 4}. This part becomes much difficult when dealing SPDEs driven by degenerate multiplicative noise. Therefore, for simplicity, we consider the noise is additive or nondegenerate for example. Noting that  in \cite{O2008}, it is proved that Equation (\ref{Equation NS}) is exponentially ergodic,  hence the uniform irreducibility for the degenerate multiplicative case is likely to hold true.

	\begin{proposition}
	Assume $\mathbf{H_1}$ - $\mathbf{H_3}$. Further, assume that  the noise is additive, that is,
	\begin{equation*}
	    \sigma(x)=\sigma(0)\;\text{for all}\;x\in H,
	\end{equation*}
	then $\{P_t\}_{t\geq 0}$ is asymptotically stable.
	\end{proposition}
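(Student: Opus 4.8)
The plan is to invoke Theorem~\ref{Thm 4} by verifying the three standing requirements in its statement~$(\runum{3})$: the Feller property (already available), eventual continuity at some point $z$, and the lower bound condition~\eqref{eq 4.1} at that same $z$. The first two are done: Proposition~\ref{NS Prop Feller} gives the Feller property under $\mathbf{H_1}$, and Proposition~\ref{NS Prop3} gives eventual continuity on all of $H$ under $\mathbf{H_1}$--$\mathbf{H_3}$. So the only thing left is the lower bound condition. For this I would route through Proposition~\ref{prop lbc}: it suffices to exhibit (i) a Lyapunov structure $P_tV(x)\le h(t)V(x)+C$ with $h(t)\to 0$, and (ii) uniform irreducibility of the form $\inf_{x\in\{V\le R\}}P_T(x,B(z,\epsilon))>0$ for suitable $T=T(\epsilon,R)$, with $z$ taken to be the origin $0\in H$.

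For the Lyapunov structure I would simply take $V(x)=|x|^2$. Proposition~\ref{NS Prop EE} already proves exactly this: $\E|\mathbf{u}^x_t|^2\le e^{-2\nu t}|x|^2+\tfrac{B_0}{2\nu}$, i.e.\ $P_tV(x)\le h(t)V(x)+C$ with $h(t)=e^{-2\nu t}\to 0$ and $C=B_0/(2\nu)$. So condition~$(\runum{1})$ of Proposition~\ref{prop lbc} is immediate and requires no new work. The substantive step is condition~$(\runum{2})$, the uniform irreducibility: for any $\epsilon,R>0$ there must exist $T>0$ with $\inf_{|x|\le R}P_T(x,B(0,\epsilon))>0$, where $B(0,\epsilon)$ is the ball in $H$. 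Here is where the additivity hypothesis $\sigma(x)\equiv\sigma(0)$ enters. The standard route (as in \cite{EM2001}) is a two-stage argument. First, a control argument: for the deterministic controlled 2D Navier--Stokes equation one can steer any initial datum in the ball $\{|x|\le R\}$ into $B(0,\epsilon/2)$ in a fixed time $T$ using an admissible control (dissipation does most of the work; one only needs a modest control to kill the low modes), and the driving of the SPDE by additive noise means the control can be absorbed into a shift of the Wiener path. Second, a Girsanov / support argument: since the noise is additive and (by $\mathbf{H_2}$) nondegenerate on $P_NH$, the law of the solution path on $[0,T]$ is equivalent to the law of the controlled path whenever the control lies in the Cameron--Martin space; combined with continuity of the solution map and the approximate controllability just described, this yields $P_T(x,B(0,\epsilon))>0$ for each fixed $x$, and a compactness/continuity argument over the compact-in-$H$?—actually merely bounded—set $\{|x|\le R\}$ upgrades this to a uniform positive lower bound. (One can instead argue uniformity directly by noting the control and the Girsanov density can be chosen with bounds depending only on $R$, not on $x$.)

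Once Proposition~\ref{prop lbc} delivers \eqref{eq 4.1} at $z=0$, all hypotheses of Theorem~\ref{Thm 4}$(\runum{3})$ are in place, and asymptotic stability of $\{P_t\}_{t\ge 0}$ follows, together with existence and uniqueness of the invariant measure and weak convergence $P_t\mu\Rightarrow\mu_*$ for every $\mu\in\mathcal P(H)$.

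\textbf{Main obstacle.} The only real work is the uniform irreducibility, and within it the genuinely delicate point is the \emph{uniformity} over $\{|x|\le R\}$: approximate controllability to $B(0,\epsilon)$ in a fixed time and the positivity $P_T(x,B(0,\epsilon))>0$ for each individual $x$ are classical for additive, $P_N$-nondegenerate noise, but one must check that the time $T$ can be chosen independent of $x$ in the ball and that the resulting probability is bounded below uniformly—this is where the energy dissipation estimate and a careful, $R$-dependent-but-$x$-uniform choice of control and Girsanov shift are needed. If one is willing to also assume the noise is nondegenerate on all of $H$ (not just on $P_NH$), this step simplifies considerably, essentially following \cite{Z2009,HM2011}.
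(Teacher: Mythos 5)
Your proposal follows essentially the same route as the paper: assemble the Feller property (Proposition~\ref{NS Prop Feller}), eventual continuity (Proposition~\ref{NS Prop3}), and the Lyapunov structure from the energy estimate (Proposition~\ref{NS Prop EE}), then reduce everything to uniform irreducibility via Proposition~\ref{prop lbc} and conclude by Theorem~\ref{Thm 4}. The paper simply cites \cite[Lemma 3.1]{EM2001} for the uniform irreducibility in the additive case, which is exactly the control-plus-Girsanov argument (with the $x$-uniform choice of control over $\{|x|\le R\}$) that you sketch in more detail.
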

	\begin{proof}[Proof.] 
	As  the noise is additive, the uniform irreducibility follows from \cite[Lemma 3.1]{EM2001}, which completes the proof.
	\end{proof}

	\begin{remark}
	The uniform irreducibility is also easily obtained when the noise is uniformly nondegenerate, which in turn implies the asymptotic stability for Equation (\ref{Equation NS}). For example, if we further make the following hypothesis:
	
	\begin{itemize}
		\item[$\mathbf{H_2'}$] The function $\sigma$ is uniformly nondegenerate, i.e., there exists a constant $C_0$ such that
		\begin{equation*}
		\sup\limits_{\mathbf{u}\in H}|\sigma^{-1}(\mathbf{u})(\mathbf{w})|\leq C_0|\mathbf{w}|.
		\end{equation*}
	\end{itemize}
	\end{remark}

	\paragraph{Uniform irreducibility} In this case, we apply similar arguments as in \cite[Lemma 3.8]{HM2011} to show the uniform irreducibility. 
	
	\begin{proposition}\label{NS Prop UI}
    Assume $\mathbf{H_1}$ and $\mathbf{H_2'}$, then $\{P_t\}_{t\geq 0}$ is uniformly irreducible.
	\end{proposition}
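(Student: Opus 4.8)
The plan is to follow the control-plus-Girsanov scheme of \cite[Lemma 3.8]{HM2011}, recast in the uniform-irreducibility form of Proposition~\ref{prop lbc}$(\runum{2})$ with $z=0$ and $V(x)=|x|^2$: I must produce, for every $\epsilon,R>0$, a time $T=T(\epsilon,R)>0$ with $\inf_{|x|^2\le R}P_T(x,B(0,\epsilon))>0$. The first step is a dissipativity reduction to a \emph{fixed} ball. Setting $\rho_0^2:=2B_0/\nu$, the energy estimate of Proposition~\ref{NS Prop EE} together with Chebyshev's inequality gives $P_{T_1}(x,\{|y|\le\rho_0\})=\mathbb{P}(|\mathbf{u}^x(T_1)|\le\rho_0)\ge 1-\rho_0^{-2}(e^{-2\nu T_1}|x|^2+B_0/(2\nu))\ge 1/4$ as soon as $T_1=T_1(R)$ is chosen so that $e^{-2\nu T_1}R\le\rho_0^2/4$. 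By the Chapman--Kolmogorov identity, $P_{T_1+T_2}(x,B(0,\epsilon))\ge P_{T_1}(x,\{|y|\le\rho_0\})\cdot\inf_{|y|\le\rho_0}P_{T_2}(y,B(0,\epsilon))$, so it suffices to bound $\inf_{|y|\le\rho_0}P_{T_2}(y,B(0,\epsilon))$ from below for a well-chosen $T_2=T_2(\epsilon)$; in other words, to establish irreducibility from initial data in the fixed ball $\{|y|\le\rho_0\}$.

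For the fixed ball I would steer the process toward $0$ by a damping control. Fix $y$ with $|y|\le\rho_0$, let $\mathbf{u}^y$ solve \eqref{Equation NS} from $y$, and, using the uniform nondegeneracy $\mathbf{H_2'}$, define the adapted control $g(t):=-\kappa\,\sigma^{-1}(\mathbf{u}^y(t))(\mathbf{u}^y(t))$, which satisfies $|g(t)|\le\kappa C_0|\mathbf{u}^y(t)|$, with $\kappa>0$ to be fixed. Let $\mathcal{E}_T:=\exp(\int_0^T g(s)\,dW_s-\tfrac12\int_0^T|g(s)|^2\,ds)$ and $d\mathbb{Q}:=\mathcal{E}_T\,d\mathbb{P}$. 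Under $\mathbb{Q}$, Girsanov's theorem makes $\widetilde W_t:=W_t-\int_0^t g(s)\,ds$ a Brownian motion and turns \eqref{Equation NS} into the same equation with the additional damping $-\kappa\mathbf{u}^y\,dt$. Applying the It\^o formula to $|\mathbf{u}^y|^2$, using $\langle\mathbf{u}\cdot\nabla\mathbf{u},\mathbf{u}\rangle=0$ and $|\sigma|^2\le B_0$ (from $\mathbf{H_1}$), and taking $\mathbb{E}^{\mathbb{Q}}$, I obtain $\mathbb{E}^{\mathbb{Q}}|\mathbf{u}^y(T)|^2\le e^{-2\kappa T}\rho_0^2+B_0/(2\kappa)$. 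Choosing first $\kappa>4B_0/\epsilon^2$ and then $T=T_2$ large enough that $e^{-2\kappa T_2}\rho_0^2<\epsilon^2/8$, this becomes $<\epsilon^2/4$, so Chebyshev yields $\mathbb{Q}(|\mathbf{u}^y(T_2)|<\epsilon)\ge 3/4$.

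It then remains to transfer this bound back to $\mathbb{P}$. With $A:=\{|\mathbf{u}^y(T_2)|<\epsilon\}$, Cauchy--Schwarz gives $\tfrac34\le\mathbb{Q}(A)=\mathbb{E}^{\mathbb{P}}[\mathbf{1}_A\mathcal{E}_{T_2}]\le(\mathbb{E}^{\mathbb{P}}[\mathcal{E}_{T_2}^2])^{1/2}\mathbb{P}(A)^{1/2}$, whence $P_{T_2}(y,B(0,\epsilon))=\mathbb{P}(A)\ge\tfrac{9}{16}(\mathbb{E}^{\mathbb{P}}[\mathcal{E}_{T_2}^2])^{-1}$; combined with Step~1 this completes the proof, provided $\mathbb{E}^{\mathbb{P}}[\mathcal{E}_{T_2}^2]$ is bounded uniformly over $|y|\le\rho_0$. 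This uniform bound is the point where I expect the real work to concentrate: writing $\mathcal{E}_{T_2}^2$ as a genuine exponential martingale times $\exp(\int_0^{T_2}|g(s)|^2\,ds)$ and using H\"older's inequality, it reduces to the exponential integrability $\sup_{|y|\le\rho_0}\mathbb{E}^{\mathbb{P}}\exp(c\int_0^{T_2}|\mathbf{u}^y(t)|^2\,dt)<\infty$ for every $c>0$ (equivalently, Novikov's condition for $g$). This is a standard estimate for the 2D stochastic Navier--Stokes equation with bounded multiplicative noise, derivable from the It\^o formula for $|\mathbf{u}^y|^2$ together with exponential-supermartingale bounds using $\mathbf{H_1}$; alternatively one avoids it entirely by running the whole argument up to the stopping time $\tau_M=\inf\{t:|\mathbf{u}^y(t)|\ge M\}$ and letting $M$ be large, exactly as in \cite[Lemma 3.8]{HM2011}. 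Putting the three steps together gives $\inf_{|x|^2\le R}P_{T_1+T_2}(x,B(0,\epsilon))>0$, which is the asserted uniform irreducibility.
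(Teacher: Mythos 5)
Your route is genuinely different from the paper's. The paper does not use Girsanov at all: it takes the \emph{deterministic} unforced Navier--Stokes flow $\mathbf{w}$ started from the shifted point $x+\tfrac{\epsilon}{2}e_1$ as a reference trajectory, and then shows that $|\mathbf{u}_t-\mathbf{w}_t|^2$ stays in a thin annulus around $(\epsilon/2)^2$ up to time $T$ with probability bounded below, by applying the small-ball estimate for semimartingales with nondegenerate martingale part (\cite[Lemma I.8.3]{B1998}) to $D(t)=|\mathbf{u}_t-\mathbf{w}_t|^2-(\epsilon/2)^2$. There $\mathbf{H_2'}$ is used only to bound $\tfrac{d}{dt}\langle M\rangle_t$ \emph{from below} (the shift by $\tfrac{\epsilon}{2}e_1$ exists precisely to keep $|\mathbf{u}_t-\mathbf{w}_t|$ away from zero so that this lower bound is nontrivial), and no change of measure or exponential moment of the solution is ever needed. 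You instead use $\mathbf{H_2'}$ to invert $\sigma$ and realize a damping drift $-\kappa\mathbf{u}\,dt$ via a Girsanov shift, prove the target estimate under $\mathbb{Q}$, and transfer back by Cauchy--Schwarz. Your preliminary reduction to the fixed ball $\{|y|\le\rho_0\}$ via Proposition \ref{NS Prop EE} is sound (and is essentially what Proposition \ref{prop lbc} already encodes), and the damped energy estimate under $\mathbb{Q}$ with $\kappa>4B_0/\epsilon^2$ is correct.

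There is, however, one concrete flaw in the transfer step as written. You claim that $\sup_{|y|\le\rho_0}\mathbb{E}^{\mathbb{P}}\exp\bigl(c\int_0^{T_2}|\mathbf{u}^y(t)|^2\,dt\bigr)<\infty$ holds \emph{for every} $c>0$. It does not: under $\mathbf{H_1}$ the exponential supermartingale bound (Lemma \ref{Lemma M} applied to \eqref{Equation NS2}) gives $\nu\int_0^t\|\mathbf{u}\|^2\,ds\le |y|^2+B_0t+\Xi_{\kappa_0}$ with $\kappa_0=\nu/(4B_0)$, and $\mathbb{E}\exp(c'\Xi_{\kappa_0})<\infty$ only for $c'<2\kappa_0$; combined with Poincar\'e this yields finiteness only for $c<\lambda_1\nu^2/(2B_0)$. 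Since your $\kappa$ must grow like $B_0/\epsilon^2$ and the Novikov/second-moment bound for $\mathcal{E}_{T_2}$ requires exponential integrability at level $c\sim\kappa^2C_0^2$, the unlocalized argument breaks down for small $\epsilon$. Your own fallback --- switching the control off at $\tau_M=\inf\{t:|\mathbf{u}(t)|\ge M\}$ so that $|g|\le\kappa C_0M$ is bounded, bounding $\mathbb{E}^{\mathbb{P}}[\mathcal{E}_{T_2}^2]\le e^{\kappa^2C_0^2M^2T_2}$, and checking that the damped process stays below level $M$ up to time $T_2$ with $\mathbb{Q}$-probability close to one --- is the version that actually closes the argument, but it is not optional: it is the proof, and it needs to be written out (in particular the control must be defined as a feedback on the \emph{controlled} trajectory to avoid circularity in the definition of $\tau_M$). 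With that step executed, the proposal is a valid alternative proof of Proposition \ref{NS Prop UI}.
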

	\begin{proof}[Proof.] 
	The proof resembles that of \cite[Lemma 2.4]{SS2002} and \cite[Lemma 3.8]{HM2011}. Fix $x\in B(0,R)$ and $\epsilon\in(0,1)$, let $\mathbf{w}$ be the solution of deterministic equation
	\begin{equation*}
	\left\{	\begin{aligned}
		&d\mathbf{w}+\mathbf{w}\cdot\nabla\mathbf{w}dt=(\nu\Delta\mathbf{w}-\nabla p)dt,\\
		&\mathbf{w}_0=x+\frac{\epsilon}{2}e_1,
		\end{aligned}\right.
	\end{equation*}
	where $e_1$ is the eigenvector corresponding to eigenvalue $\lambda_1$.
	It is easy to show that 
	\begin{equation*}
		|\mathbf{w}_t|^2\leq e^{-2\nu t}|x+\frac{\epsilon}{2}e|^2 \leq e^{-2\nu t}(R+1)^2.
	\end{equation*}
	Hence, we choose $T=T(R)>0$ sufficiently large such that $|\mathbf{w}_t|<\epsilon/2$. Furthermore, applying some enstrophy estimates, there exists some $K>0$ such that
	\begin{equation*}
	\|\mathbf{w}_t\|\leq K<\infty\text{ for all }t\geq 0.
	\end{equation*}\par 
	Define 
	\begin{equation*}
		D(t):=|\mathbf{u}_t-\mathbf{w}_t|^2-(\frac{\epsilon}{2})^2,
	\end{equation*}
	Then 
	\begin{equation*}
		dD(t)=-2\nu\langle\mathbf{u}_t-\mathbf{w}_t,\Delta(\mathbf{u}_t-\mathbf{w}_t)+B(\mathbf{u}_t,\mathbf{u}_t)-B(\mathbf{w}_t,\mathbf{w}_t)\rangle dt+|\sigma(\mathbf{u}_t)|^2dt+2\langle \mathbf{u}_t-\mathbf{w}_t, \sigma(\mathbf{u}_t)dW_t\rangle,
	\end{equation*}
	where $B(u,v):=-u\cdot\nabla v$ and  $D(0)=0$. Define 
	\begin{equation*}
		\tau:=\inf\{t\geq 0:|D(t)|>(\frac{\epsilon}{4})^2\}.
	\end{equation*}
	For $t\in [0,\tau)$,
	\begin{equation*}
		\frac{3\epsilon^2}{16}<|\mathbf{u}_t-\mathbf{w}_t|^2< \frac{5\epsilon^2}{16},\quad |\mathbf{u}_t|^2<2(|\mathbf{u}_t-\mathbf{w}_t|^2+|\mathbf{w}_t|^2)\leq 2(R+1)^2+ \frac{5\epsilon^2}{8},
	\end{equation*}
	and that
	\begin{equation*}
		-2\nu\langle\mathbf{u}_t-\mathbf{w}_t,\Delta(\mathbf{u}_t-\mathbf{w}_t)\rangle=-2\nu\|\mathbf{u}_t-\mathbf{w}_t\|^2,
	\end{equation*}
	\begin{equation*}
	\begin{aligned}
	\langle\mathbf{u}_t-\mathbf{w}_t,B(\mathbf{u}_t,\mathbf{u}_t)-B(\mathbf{w}_t,\mathbf{w}_t)\rangle&=\langle\mathbf{u}_t-\mathbf{w}_t,B(\mathbf{u}_t,\mathbf{u}_t-\mathbf{w}_t)\rangle+\langle\mathbf{u}_t-\mathbf{w}_t,B(\mathbf{u}_t-\mathbf{w}_t,\mathbf{w}_t)\rangle\\
	&\leq C_D|\mathbf{u}_t-\mathbf{w}_t|\,\|\mathbf{u}_t-\mathbf{w}_t\|\,\|\mathbf{w}_t\|\\
	&\leq \nu \|\mathbf{u}_t-\mathbf{w}_t\|^2+\frac{C_D^2}{4\nu}|\mathbf{u}_t-\mathbf{w}_t|^2\,\|\mathbf{w}_t\|^2.
	\end{aligned}
	\end{equation*} 
	In summary, we obtain that for $t\in[0,\tau)$,
	\begin{equation*}
	    -2\nu\langle\mathbf{u}_t-\mathbf{w}_t,\Delta(\mathbf{u}_t-\mathbf{w}_t)+B(\mathbf{u}_t,\mathbf{u}_t)-B(\mathbf{w}_t,\mathbf{w}_t)\rangle dt+|\sigma(\mathbf{u}_t)|^2dt\leq (\frac{5}{64}C_D^2 K^2 \epsilon^2 +B_0)dt.
	\end{equation*}
	For the stochastic part, letting $M_t:=\int_{0}^{t}2\langle \mathbf{u}_s-\mathbf{w}_s, \sigma(\mathbf{u}_s)dW_s\rangle $  be the local martingale part, we infer that
	\begin{equation*}
		\frac{d}{dt}\langle M\rangle_t=4|\langle \mathbf{u}_t-\mathbf{w}_t,\sigma(\mathbf{u}_t)|^2\leq 4 B_0|\mathbf{u}_t-\mathbf{w}_t|^2,
	\end{equation*}
	 On ther other hand, by $\mathbf{H_2'}$, the inverse operator $\sigma^{-1}(\mathbf{u}_t)$ is well defined and bounded. Consequently, there exists a constant $C_0>0$ such that
	\begin{equation*}
	\begin{aligned}
		\frac{d}{dt}\langle M\rangle_t&=4|\langle \mathbf{u}_t-\mathbf{w}_t,\sigma(\mathbf{u}_t)\rangle|^2\geq 4 C_0|\mathbf{u}_t-\mathbf{w}_t|^2>\frac{3}{4}C_0\epsilon^2>0.
	\end{aligned}
	\end{equation*}\par 	
	Let now $\widetilde{W}$ be a Wiener	process that is independent of $W$ and define
	\begin{equation*}
		Y(t):=D(t\wedge \tau)+(\widetilde{W}_t-\widetilde{W}_\tau)\mathbf{1}_{\{t\geq\tau\}}.
	\end{equation*}
	Then $Y(t)$ is a semimartingale with $Y(0)=0$, which fulfills the conditions of \cite[Lemma I.8.3]{B1998} . Therefore, there exists $p>0$ such that for all $x\in B(0,R)$,
	\begin{equation*}
		\mathbb{P}(\sup\limits_{0\leq t\leq T}|Y(t)|\leq (\frac{\epsilon}{4})^2)\geq p.
	\end{equation*}\par 
	Then
	\begin{equation*}
		P_T(x,B(0,\epsilon))=\mathbb{P}(|\mathbf{u}_T|<\epsilon)\geq \mathbb{P}(\tau> T)\geq 	\mathbb{P}(\sup\limits_{0\leq t\leq T}|Y(t)|\leq (\frac{\epsilon}{4})^2)\geq p
	\end{equation*}
	for all $x\in B(0,R),$ completing the proof.
	\end{proof}

	In conclusion, we have the following theorem.
	\begin{theorem}
	    Assume $\mathbf{H_1}$ and $\mathbf{H_2'}$, then $\{P_t\}_{t\geq 0}$ is asymptotically stable.
	\end{theorem}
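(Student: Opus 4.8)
The plan is to invoke Theorem~\ref{Thm 4}, since all the required ingredients have been assembled by this point in the paper. Under $\mathbf{H_1}$ we have the Feller property (Proposition~\ref{NS Prop Feller}), and the uniform nondegeneracy $\mathbf{H_2'}$ in particular implies $\mathbf{H_2}$, so that together with $\mathbf{H_1}$ and $\mathbf{H_3}$ all of $\mathbf{H_1}$--$\mathbf{H_3}$ hold; these give eventual continuity of $\{P_t\}_{t\geq 0}$ on $H$ by Proposition~\ref{NS Prop3}. Thus it remains only to check the lower bound condition \eqref{eq 4.1} at a single point, and for this I would take $z=0$ and apply Proposition~\ref{prop lbc}.

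Concretely, first I would establish the Lyapunov structure of Proposition~\ref{prop lbc}$(\runum{1})$ with $V(x)=|x|^2$: the energy estimate \eqref{Equation NS21} of Proposition~\ref{NS Prop EE} reads $P_tV(x)=\E|\mathbf{u}^x_t|^2\leq e^{-2\nu t}|x|^2+\frac{B_0}{2\nu}$, which is exactly \eqref{eq lyapunov} with $h(t)=e^{-2\nu t}\to 0$ and $C=\frac{B_0}{2\nu}$. Second, I would invoke the uniform irreducibility: under $\mathbf{H_1}$ and $\mathbf{H_2'}$, Proposition~\ref{NS Prop UI} gives precisely the condition \eqref{eq irreducibility} with $z=0$, namely that for every $\epsilon,R>0$ there is $T=T(\epsilon,R)$ with $\inf_{x\in B(0,R)}P_T(x,B(0,\epsilon))>0$; since $\{V\leq R\}=\{|x|^2\leq R\}=\overline{B(0,\sqrt{R})}$, this is the same statement up to relabeling $R$. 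Feeding these two facts into Proposition~\ref{prop lbc} yields that for every $\epsilon>0$, $\inf_{x\in H}\liminf_{t\to\infty}P_t(x,B(0,\epsilon))>0$, which is \eqref{eq 4.1} with $z=0$.

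Having verified eventual continuity on $H$ together with \eqref{eq 4.1} at $z=0$, statement $(\runum{2})$ of Theorem~\ref{Thm 4} holds, and therefore $(\runum{1})$ holds: $\{P_t\}_{t\geq 0}$ is asymptotically stable with a unique invariant measure. This completes the proof. There is essentially no obstacle here — the theorem is a corollary assembled from the propositions already proved — so the only care needed is the bookkeeping that $\mathbf{H_2'}\Rightarrow\mathbf{H_2}$ (the uniformly nondegenerate bound supplies a uniformly bounded pseudo-inverse on all of $H$, hence a fortiori on $P_NH$) so that Propositions~\ref{NS Prop3} and \ref{NS Prop UI} are both applicable under the stated hypotheses.
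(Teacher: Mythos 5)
Your proposal is correct and is essentially the paper's own (implicit) argument: the theorem carries no separate proof precisely because it is the assembly of Propositions \ref{NS Prop Feller}, \ref{NS Prop3}, \ref{NS Prop EE} and \ref{NS Prop UI} through Proposition \ref{prop lbc} and Theorem \ref{Thm 4}, exactly as you describe. The only point worth stating more carefully is that $\mathbf{H_3}$ is not a hypothesis of the theorem but is freely arrangeable: under $\mathbf{H_2'}$ the pseudo-inverse bound of $\mathbf{H_2}$ holds for every $N$, and since $\lambda_N\to\infty$ one simply chooses $N$ large enough that \eqref{Equation NS14.3} holds before running the coupling argument of Proposition \ref{NS Prop3}.
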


    \begin{remark}
    We point that  the eventual continuity method is also applicable to the stochastic delay equations, studied in \cite{HM2011}. As in  \cite{HM2011}, we can obtain the uniqueness of invariant measures can be obtained through he one-sided Lipschitz and non-degeneracy condition.  While these conditions are not able to ensure either the tightness or the lower bound condition, therefore, the existence of invariant measures require some extra conditions.
    \end{remark}

    \subsection{The modified  Lagrangian observation process} \label{Section L}
	
	Next example is the modified  Lagrangian observation process with multiplicative noise. For the Lagrangian observation process with additive noise, in \cite{KPS2010}, Komorowski, Peszat and Szarek use  Malliavin calculus to prove the e-property, and further show the unique ergodicity. For our case,  as the driven noise is non-additive, the Malliavin calculus approach is inconvenient to apply. Instead, we use the asymptotic coupling arguments used in \cite{HMS2011,NJG2017,KS2018} to establish the eventual continuity.\par 

	 We first introduce some standard notations.	Given an $r\geq 0$, we denote by $\mathcal{X}^r$ the Sobolev space which is the completion of 
	\begin{equation*}
		\{x\in C^\infty(\mathbb{T}^d;\mathbb{R}^d):\int_{\mathbb{T}^d}x(\xi)d\xi=0,\hat{x}(k)\in\text{ Im } \mathcal{E}(k),\,\forall k\in \mathbb{Z}_*^d\}
	\end{equation*}
	with respect to the norm 
	\begin{equation*}
		\|x\|^2_{\mathcal{X}^r}:=\sum_{k\in\mathbb{Z}_*^d}|k|^{2r}|\hat{x}(k)|^2,
	\end{equation*}
	where $\mathbb{Z}_*^d=\mathbb{Z}^d-\{0\}$ and
	\begin{equation*}
		\hat{x}(k):=(2\pi)^{-d}\int_{\mathbb{T}^d}x(\xi)e^{-i\xi\cdot k}d\xi,\;k\in\mathbb{Z}_*^d,
	\end{equation*}
	are the Fourier coefﬁcients of $x$. \par 
	Let $A_r$ be an operator on $\mathcal{X}^r$ defined by
	\begin{equation*}
		\widehat{A_rx}(k):=-\gamma(k)\hat{x}(k),\;k\in\mathbb{Z}_*^d,
	\end{equation*}
	with the domain
	\begin{equation*}
		D(A^r):=\{x\in\mathcal{X}^r:\sum_{k\in\mathbb{Z}_*^d}|\gamma(k)|^2|k|^{2r}|\hat{x}(k)|^2<\infty\}.
	\end{equation*}\par 
	For each $u\in \mathcal{X}^r$, let $Q(u)$ be a symmetric positive deﬁnite bounded linear operator on
	\begin{equation*}
		\{x\in L^2(\mathbb{T}^d,d\xi;\mathbb{R}^d):\int_{\mathbb{T}^d}x(\xi)d\xi=0 \}
	\end{equation*}
	given by
	\begin{equation*}
		\widehat{Q(u)x}(k):=q_k(u)\gamma(k)\mathcal{E}(k)\hat{x}(k),
	\end{equation*}
	where $q_k:\mathcal{X}^r\rightarrow (0,\infty)$ which we will specify below.

	Let $m\geq 0$ be a constant and let $\mathcal{X}:=\mathcal{X}^m$  and  $\mathcal{V}:=\mathcal{X}^{m+1}$. Note that, by Sobolev embedding, $\mathcal{X}\hookrightarrow C^1(\mathbb{T}^d,\mathbb{R}^d)$ and hence there exists a constant $C>0$ such that
	\begin{equation}\label{Equation L1}
		\|x\|_{C^1(\mathbb{T}^d,\mathbb{R}^d)}\leq C\|x\|_{\mathcal{X}},\;\forall x\in\mathcal{X}.
	\end{equation}\par 
	For a given $x\in \mathcal{X}$ and a cylindrical Wiener process $W$ in $\mathcal{X}$ , consider the SPDE
	\begin{equation}\label{Equation L0}
	\left\{\begin{aligned}
	&d\mathbf{u}_t=A\mathbf{u}_tdt +B(\mathbf{u}_t,\mathbf{u}_t)dt+Q^{1/2}(\mathbf{u}_t)dW_t,\\
	&\mathbf{u}_0=x,
	\end{aligned}\right.
	\end{equation}
	where $W$ is a cylindrical Wiener process in $\mathcal{X}$ and 
	\begin{equation*}
		B(\psi,\phi)(\xi):=\psi(0)\cdot\nabla\phi=(\sum_{j=1}^{d}\psi_j(0)\frac{\partial\phi_1}{\partial\xi_j}(\xi),\dots,\sum_{j=1}^{d}\psi_j(0)\frac{\partial\phi_d}{\partial\xi_j}(\xi)),\;\psi,\phi\in\mathcal{X},\xi\in\mathbb{T}^d.
	\end{equation*}
	By (\ref{Equation L1}), $B(\cdot,\cdot)$ is a continuous bilinear form mapping from $\mathcal{X\times\mathcal{X}}$ into $\mathcal{X}^{m-1}$.\par 
	Given $x\in\mathcal{X}$, let $\mathbf{u}^x_t$ denote the value at $t\geq 0$ of a solution to (\ref{Equation L0}) satisfying $\mathbf{u}_0^x=x$. Since the existence of a strong solution follows from the Banach fixed point argument, $\mathbf{u}=\{\mathbf{u}^x,x\in\mathcal{X}\}$ is a stochastically continuous Markov family and its transition semigroup $\{P_t\}_{t\geq 0}$ is Feller.\par 
	
	We make the following Hypotheses: 
	\begin{itemize}
		\item[$\mathbf{L_1.}$] 	
		\begin{equation*}
			\exists m>d/2+1,\alpha\in(0,1)\quad |||\mathcal{E}|||:=\sum_{k\in\mathbb{Z}_*^d}\gamma^\alpha(k)|k|^{2(m+1)}\text{Tr }\mathcal{E}(k)<\infty
		\end{equation*}
		\item[$\mathbf{L_2.}$] 
		\begin{equation*}
			\int_{0}^{\infty}\sup\limits_{k\in\mathbb{Z}^d_*}e^{-\gamma(k)t}|k|dt<\infty.
		\end{equation*}		\par 
		\item[$\mathbf{L_3.}$] There exists $M\in\mathbb{N}$ such that for $|k|>M,$
		\begin{equation*}
		q_k(x)\equiv 1\quad\forall x\in\mathcal{X}.
		\end{equation*}
		For $|k|\leq M,$ there exists constants $0<\alpha<\beta<\infty$ such that
		\begin{equation*}
			\alpha\leq q_k(x)\leq \beta\quad\forall x\in\mathcal{X}.
		\end{equation*}\par 
		\item[$\mathbf{L_4.}$] Furthermore, we assume that $q_k(\cdot)$ is defined only by the low modes, that is, 
		\begin{equation*}
			q_{k}(x)=q_k(P_Mx)\quad\forall x\in\mathcal{X},\,\forall k\in\mathbb{Z}_*^d.
		\end{equation*}
		We also assume that $q_k^{1/2}$ is Lipschitz, i.e. there exists some constant $K>0$ such that
		\begin{equation*}
		|q_k^{1/2}(x)-q_k^{1/2}(y)|\leq K|x-y| \quad\forall x,y\in\mathcal{X},\,\forall k\in\mathbb{Z}_*^d.
		\end{equation*}
		\end{itemize}

	\begin{theorem}\label{Thm L} Assume $\mathbf{L_1}$-$\mathbf{L_4}$. Then the Markov semigroup	$\{P_t\}_{t\geq 0}$, generated by $\mathbf{u}=\{\mathbf{u}^x,x\in\mathcal{X}\}$, is asymptotically stable.
	\end{theorem}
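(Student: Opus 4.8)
The plan is to verify the four ingredients that Proposition~\ref{prop lbc} and Theorem~\ref{Thm 4} require: the Feller property, eventual continuity, a Lyapunov structure \eqref{eq lyapunov}, and the uniform irreducibility \eqref{eq irreducibility} at the point $z=0$. The Feller property and stochastic continuity of $\{P_t\}_{t\ge 0}$ are already recorded as a consequence of the Banach fixed-point construction of the solution, so it remains to establish the other three.

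\textbf{Lyapunov structure.} First I would apply the It\^o formula to $\|\mathbf{u}_t\|_{\mathcal{X}}^2$. The key structural fact is that the nonlinearity is transport-type, $B(\psi,\phi)(\xi)=\psi(0)\cdot\nabla\phi(\xi)$; since $\psi(0)$ is a constant vector in $\xi$, integration by parts on $\mathbb{T}^d$ gives $\langle B(\mathbf{u},\mathbf{u}),\mathbf{u}\rangle_{\mathcal{X}}=0$ (and more generally the $\mathcal{X}^m$ inner product of $B(\mathbf{u},\mathbf{u})$ with $\mathbf{u}$ vanishes, because differentiation in $\xi$ commutes with the Fourier multiplier $|k|^{2m}$ and $\psi(0)\cdot\nabla$ is skew-adjoint on mean-zero fields). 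The dissipative term $\langle A\mathbf{u},\mathbf{u}\rangle_{\mathcal{X}}=-\sum_k\gamma(k)|k|^{2m}|\hat{\mathbf{u}}(k)|^2$ gives a strictly negative contribution bounded above by $-c\|\mathbf{u}\|_{\mathcal{X}}^2$ provided $\inf_k\gamma(k)>0$ (which follows from $\mathbf{L_2}$), while the trace term $\mathrm{Tr}\,Q(\mathbf{u}_t)$ is bounded by a constant using $\mathbf{L_1}$ and $\mathbf{L_3}$ (the $q_k$ are bounded and $|||\mathcal{E}|||<\infty$). Gronwall then yields $\mathbb{E}\|\mathbf{u}_t^x\|_{\mathcal{X}}^2\le e^{-ct}\|x\|_{\mathcal{X}}^2+C$, which is exactly \eqref{eq lyapunov} with $V(x)=\|x\|_{\mathcal{X}}^2$, $h(t)=e^{-ct}$.

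\textbf{Eventual continuity via generalized coupling.} This is the technical heart and the step I expect to be the main obstacle. Mimicking the Navier--Stokes treatment, for fixed $x,y\in\mathcal{X}$ I would run $\mathbf{u}^x$ with initial datum $x$ and construct a controlled process $\tilde{\mathbf{u}}^y$ started at $y$ by adding a feedback control acting on the (finitely many) low modes $P_M$ that are genuinely forced, of the form $\lambda P_M(\mathbf{u}^x-\tilde{\mathbf{u}}^y)\,dt$ pushed through $Q^{1/2}(\tilde{\mathbf{u}}^y)$ — here one uses $\mathbf{L_3}$ to invert $Q^{1/2}$ on $P_MH$ and the boundedness of that inverse. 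Setting $\mathbf{v}=\mathbf{u}^x-\tilde{\mathbf{u}}^y$, an It\^o estimate on $\|\mathbf{v}_t\|_{\mathcal{X}}^2$ should give, after absorbing the bilinear difference terms (using \eqref{Equation L1} to control $\mathbf{v}(0)\cdot\nabla$ by $C\|\mathbf{v}\|_{\mathcal{X}}$ and the Lipschitz bound on $q_k^{1/2}$ from $\mathbf{L_4}$ for the noise difference), an exponential supermartingale bound analogous to \eqref{Equation NS9.1}, forcing $\mathbf{v}_t\to 0$ almost surely when the low-mode damping dominates the high-mode growth. The delicate part — exactly as flagged in the paper's remark after Proposition~\ref{NS Prop3} — is that one cannot expect $L^p$ decay of $\mathbf{v}_t$, only a.s. decay; this is why eventual continuity rather than the e-property is the right target. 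Then a Girsanov argument bounds $d_{TV}\big(P_t(y,\cdot),\mathrm{Law}(\tilde{\mathbf{u}}^y_t)\big)$ by $C_1\|x-y\|^{C_2}\exp(C_3\|x\|^2)$, using $\mathbf{L_1}$--$\mathbf{L_4}$ to control the Novikov exponent (the control lives in the finite-dimensional range of $P_M$ where $Q^{1/2}$ is boundedly invertible). Combining, for $f\in L_b(\mathcal{X})$,
\begin{equation*}
|P_tf(x)-P_tf(y)|\le \mathbb{E}\big[(\|f\|_{\mathrm{Lip}}\|\mathbf{v}_t\|_{\mathcal{X}})\wedge 2\|f\|_\infty\big]+\|f\|_\infty\, d_{TV}\big(P_t(y,\cdot),\mathrm{Law}(\tilde{\mathbf{u}}^y_t)\big),
\end{equation*}
and letting $t\to\infty$ then $y\to x$, dominated convergence gives eventual continuity on $\mathcal{X}$.

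\textbf{Uniform irreducibility and conclusion.} Finally I would establish \eqref{eq irreducibility} at $z=0$ by a control argument in the spirit of Proposition~\ref{NS Prop UI}: for $x$ in a sublevel set $\{V\le R\}=\{\|x\|_{\mathcal{X}}^2\le R\}$, steer the deterministic equation (no noise) from $x$ toward $0$ — the linear dissipation alone drives $\|\mathbf{w}_t\|_{\mathcal{X}}$ down exponentially, so pick $T=T(\epsilon,R)$ with $\|\mathbf{w}_T\|_{\mathcal{X}}<\epsilon/2$ — and then use the nondegeneracy of the noise on the relevant modes (again $\mathbf{L_3}$) together with a stopping-time/Girsanov or a Stroock--Varadhan-type support argument to show $\mathbb{P}(\|\mathbf{u}_T^x\|_{\mathcal{X}}<\epsilon)\ge p>0$ uniformly over $\{\|x\|_{\mathcal{X}}^2\le R\}$; the uniformity comes from the compactness of the sublevel set and continuity of the control cost. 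With the Feller property, eventual continuity, the Lyapunov bound, and uniform irreducibility in hand, Proposition~\ref{prop lbc} gives the lower bound condition \eqref{eq 4.1} at $z=0$, and Theorem~\ref{Thm 4}$(\runum{2})\Rightarrow(\runum{1})$ yields that $\{P_t\}_{t\ge 0}$ is asymptotically stable. \qed
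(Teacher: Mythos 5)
Your overall architecture (Feller property $+$ eventual continuity $+$ Lyapunov structure $+$ uniform irreducibility, fed into Proposition~\ref{prop lbc} and Theorem~\ref{Thm 4}) is exactly the paper's, and your Lyapunov step is essentially the paper's Lemma~\ref{Lemma L1} (the paper works mode by mode in Fourier variables, but the mechanism --- skew-adjointness of $\psi(0)\cdot\nabla$, dissipation from $\gamma(k)\ge\gamma_*>0$, boundedness of the trace term via $\mathbf{L_1}$ and $\mathbf{L_3}$ --- is the same). The genuine gap is in your coupling construction. First, you have misread $\mathbf{L_3}$: the threshold $M$ separates the multiplicatively forced modes ($|k|\le M$, where $\alpha\le q_k\le\beta$) from the additively forced ones ($|k|>M$, where $q_k\equiv 1$); \emph{all} modes carry noise, and $M$ is fixed by the hypotheses, not at your disposal, so a control $\lambda P_M(\mathbf{u}^x-\tilde{\mathbf{u}}^y)$ damps only a prescribed finite set of modes with no reason to dominate the nonlinearity. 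Second, and more seriously, after the skew part cancels the term you must absorb is $\langle B(\mathbf{v},\mathbf{u}^x),\mathbf{v}\rangle\le C|\mathbf{v}|^2\,\|\mathbf{u}^x\|_{\mathcal{V}}$, which involves the stronger $\mathcal{X}^{m+1}$-norm of the \emph{solution}; unlike the Navier--Stokes case there is no $\|\mathbf{v}\|^2$-dissipation to trade against it, and the hypotheses give no exponential control of $\int_0^t\|\mathbf{u}^x\|_{\mathcal{V}}^2\,ds$. The paper's construction \eqref{Equation L6} resolves this by putting the \emph{low-mode part of the bilinear difference itself} into the control: the Girsanov shift is $P_N\bigl(\lambda(\mathbf{u}^x-\tilde{\mathbf{u}}^y)+B(\mathbf{u}^x,\mathbf{u}^x)-B(\tilde{\mathbf{u}}^y,\tilde{\mathbf{u}}^y)\bigr)$ with $N>M$ large and free to choose, so that only $Q_N\bigl(B(\mathbf{u}^x,\mathbf{u}^x)-B(\tilde{\mathbf{u}}^y,\tilde{\mathbf{u}}^y)\bigr)$ survives in the $\mathbf{v}$-equation; the resulting factor $\|Q_N\mathbf{u}^x\|$ is then passed through the bounded function $h(z)=z^2/(1+\gamma_*^{-1}z^2)$, and $\mathbf{L_1}$ (with its $\gamma^\alpha(k)$-weight) yields $\mathbb{E}\exp\{8\tilde{C}\int_0^t h(\|Q_N\mathbf{u}^x_s\|)ds\}\le C_1e^{\kappa t}$ with $\kappa<8\gamma_*$ once $N$ is large. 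Without this idea your supermartingale bound does not close. (Incidentally, with this construction the paper obtains $\mathbb{E}|\mathbf{v}_t|^4\to 0$ and the $t$-uniform bound $d_{TV}\le C_2|x-y|^2$ of Lemma~\ref{Lemma L3} --- stronger than the a.s.-only decay you anticipate for this model.)

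Two smaller points. In the irreducibility step, uniformity over $\{V\le R\}$ cannot come from ``compactness of the sublevel set'': balls of $\mathcal{X}$ are not compact. In the paper's Lemma~\ref{Lemma L2} the uniformity comes from the quantitative semimartingale lemma \cite[Lemma I.8.3]{B1998} applied to $Y(t)=D(t\wedge\tau)+(\widetilde{W}_t-\widetilde{W}_\tau)\mathbf{1}_{\{t\ge\tau\}}$, whose drift and quadratic-variation bounds up to the stopping time depend only on $R$ and $\epsilon$. Note also that the comparison process there is shifted by $\frac{\epsilon}{2}e$ precisely so that $\frac{d}{dt}\langle M\rangle_t$ stays bounded \emph{below} along the whole excursion, which is what makes that lemma applicable; your sketch omits this.
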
\par

	\begin{remark}
	If we further assume that $q_k\equiv1$ for all $x\in\mathbb{Z}_*^d$, then the driven noise turns into the additive case as in \cite{KPS2010}. Therefore, our result can be seen as a generalization of \cite[Theorem 5]{KPS2010}.
	\end{remark}

	To prove Theorem \ref{Thm L},  we verify the eventual continuity,  the energy estimates, and the uniform irreducibility.
	
    \paragraph{Eventual continuity}
	Again, we apply the asymptotic coupling argument to prove the eventual continuity, and overcome the difficulties caused by the multiplicative noise.\par 
	Fix any $x,y\in \mathcal{X}$, take $\mathbf{u}^x$ solving (\ref{Equation L0}) with initial datum $x,$ and define $\tilde{\mathbf{u}}^y$ as the solution to the following  equation (\ref{Equation L0}) with the initial condition $y$ and the additional
	control term: 
	\begin{equation}\label{Equation L6}
	\left\{\begin{aligned} 
	&d\tilde{\mathbf{u}}_t^y=A\tilde{\mathbf{u}}_t^ydt +B(\tilde{\mathbf{u}}_t^y,\tilde{\mathbf{u}}_t^y)dt+Q^{1/2}(\tilde{\mathbf{u}}^y_t)dW_t+P_N(\lambda(\mathbf{u}_t^x-\tilde{\mathbf{u}}_t^y)+B(\mathbf{u}_t^x,\mathbf{u}_t^x)-B(\tilde{\mathbf{u}}_t^y,\tilde{\mathbf{u}}_t^y))dt\\
	&\tilde{\mathbf{u}}_0=y,
	\end{aligned}\right.
	\end{equation}
	where $N>M$ which we will specify below, and $P_N$ is the projection operator. \par 
	Let $\mathbf{v}:=\mathbf{u}^x-\tilde{\mathbf{u}}^y.$ We have that
	\begin{equation}\label{Equation L7}
	\left\{\begin{aligned}
	&d\mathbf{v}_t=A\mathbf{v}_tdt-\lambda P_N\mathbf{v}_tdt-Q_N(B(\mathbf{u}_t^x,\mathbf{u}_t^x)-B(\tilde{\mathbf{u}}_t^y,\tilde{\mathbf{u}}_t^y))dt+(Q^{1/2}(\mathbf{u}^x_t)-Q^{1/2}(\tilde{\mathbf{u}}^y_t))dW_t,\\
	&\mathbf{v}_0=x-y.
	\end{aligned}\right.
	\end{equation}\par 
	We proceed to establish some estimates on $\mathbf{v}$ needed for existence and uniqueness of invariant measures. Applying the It\^o lemma to (\ref{Equation L7}), we find that
	\begin{equation*}
	\begin{aligned}
	d|\mathbf{v}_t|^2&=2\langle\mathbf{v}_t,\mathbf{v}_t\rangle dt-2\lambda|P_N\mathbf{v}_t|^2dt+2\langle B(\mathbf{v}_t,Q_N\mathbf{u}^x_t),\mathbf{v}_t\rangle dt+|Q^{1/2}(\mathbf{u}^x_t)-Q^{1/2}(\tilde{\mathbf{u}}^y_t)|^2dt\\
	&\quad+2\langle\mathbf{v}_t,(Q^{1/2}(\mathbf{u}^x_t)-Q^{1/2}(\tilde{\mathbf{u}}^y_t))dW_t\rangle\\
	&\leq -2\gamma_*|\mathbf{v}_t|^2dt+(K^2\|Q\|-\lambda)|P_N\mathbf{v}_t|^2dt+C\|Q_N\mathbf{u}\|\,|\mathbf{v}_t|^2dt+g_t,
	\end{aligned}
	\end{equation*}
	where $g_t:=2\langle\mathbf{v}_t,(Q^{1/2}(\mathbf{u}^x_t)-Q^{1/2}(\tilde{\mathbf{u}}^y_t))dW_t\rangle,$ and we use a generic bound 
	\begin{equation*}
	|\langle B(\mathbf{x}_t,\mathbf{y}_t),\mathbf{z}_t\rangle |\leq|\mathbf{x}_t(0)|\, \|\mathbf{y}_t\|\,|\mathbf{z}_t|\leq C|\mathbf{x}_t|\, \|\mathbf{y}_t\|\,|\mathbf{z}_t|.
	\end{equation*}
	Define
	\begin{equation*}
	h(z):=\frac{z^2}{1+\gamma_*^{-1}|z|^2},\quad z\geq 0.
	\end{equation*}
	Note that there exists a constant $\widetilde{C}$ such that
	\begin{equation*}
	Czw^2\leq \gamma_*w^2+\widetilde{C}h(z)w^2,\quad z\geq 0,w\in\mathbb{R}.
	\end{equation*}
	Therefore,
	\begin{equation}\label{Equation L8}
	d|\mathbf{v}_t|^2\leq -\gamma_*|\mathbf{v}_t|^2dt+(K^2\|Q\|-\lambda)|P_N\mathbf{v}_t|^2dt+\widetilde{C} h(\|Q_N\mathbf{u}_t^x\|)|\mathbf{v}_t|^2dt+g_t.
	\end{equation}\par 
	Furthermore, we obtain that 
		\begin{lemma}\label{Lemma L3}
		Assume $\mathbf{L_1}$-$\mathbf{L_4}$. Then
		\begin{itemize}
			\item[$(1)$] 	$\mathbf{v}_t$ converges to $0$ almost surely as $t\rightarrow\infty$;
			\item[$(2)$] \begin{equation}\label{Equation L11}
			d_{TV}(P_t(y,\cdot),{\rm Law}(\tilde{\mathbf{u}}^y_t))\leq C_2|x-y|^{2},\quad t\geq 0.
			\end{equation}
		\end{itemize}
		where $c,C_1,C_2$ are positive constants depending only on $\mathcal{E},\gamma_*,\alpha,\beta,K,M$, independent of $t$.
		\end{lemma}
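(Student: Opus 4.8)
\emph{Proof proposal for Lemma~\ref{Lemma L3}.} I would run the generalized (asymptotic) coupling scheme of \cite{HM2011,NJG2017,KS2018}, as in Lemma~\ref{NS Lemma1}, adapted to the regularity scaling of \eqref{Equation L0}. \emph{For part~(1)}, start from \eqref{Equation L8} and fix the coupling strength $\lambda\ge K^2\|Q\|$, so that the term $(K^2\|Q\|-\lambda)|P_N\mathbf v_t|^2$ is nonpositive and may be dropped, leaving $d|\mathbf v_t|^2\le\big(-\gamma_*+\widetilde C\,h(\|Q_N\mathbf u^x_t\|)\big)|\mathbf v_t|^2\,dt+g_t$. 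Introduce the finite-variation integrating factor $M_t:=\exp\big(\gamma_* t-\widetilde C\int_0^t h(\|Q_N\mathbf u^x_s\|)\,ds\big)$ and apply It\^o's formula to $\Phi_t:=|\mathbf v_t|^2 M_t$ (with the usual localization to secure integrability): the drift cancels, $d\Phi_t=M_t g_t$, so $\Phi_t$ is a nonnegative local martingale, hence a supermartingale, giving $\mathbb E\Phi_t\le|x-y|^2$ and a.s.\ convergence $\Phi_t\to\Phi_\infty<\infty$. It then remains to show $M_t\to\infty$ a.s., i.e.\ $\limsup_{t\to\infty}\frac1t\int_0^t h(\|Q_N\mathbf u^x_s\|)\,ds<\gamma_*/\widetilde C$ almost surely; this is arranged by taking $N$ large. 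Since $h(z)\le z^2$ it suffices to make the time-averaged high-mode $\mathcal V$-enstrophy of $\mathbf u^x$ small: splitting $\mathbf u^x$ into its Ornstein--Uhlenbeck component $Z$ and the mild-solution remainder, Hypothesis $\mathbf{L_1}$ gives $\mathbb E\|Q_N Z_s\|^2_{\mathcal X^{m+1}}\lesssim\sum_{|k|>N}|k|^{2(m+1)}\operatorname{Tr}\mathcal E(k)\le\gamma_N^{-\alpha}\,|||\mathcal E|||\to0$ (with $\gamma_N:=\inf_{|k|>N}\gamma(k)$), while the remainder is absorbed into the high-mode dissipation through the integrable smoothing estimate $\mathbf{L_2}$ (a Foias--Prodi-type bound, localized to the high-probability set where the low modes of $\mathbf u^x$ stay bounded). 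Hence $M_t\to\infty$, so $|\mathbf v_t|^2=\Phi_t/M_t\to0$ a.s.; moreover the logarithmic martingale integrand is bounded (by $\mathbf{L_3}$--$\mathbf{L_4}$, since $Q^{1/2}(\mathbf u^x)-Q^{1/2}(\tilde{\mathbf u}^y)$ lives on the finitely many modes $|k|\le M$ where it is Lipschitz in $P_M\mathbf v$), which, together with standard $\mathcal X$-moment bounds, upgrades this to the quantitative decay of $\mathbf v$ needed below.

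\emph{For part~(2)}, recognize \eqref{Equation L6} as equation \eqref{Equation L0} for $\tilde{\mathbf u}^y$ driven by $\widetilde W_t:=W_t+\int_0^t\psi_s\,ds$, where $\psi_s:=Q^{-1/2}(\tilde{\mathbf u}^y_s)P_N\big(\lambda\mathbf v_s+B(\mathbf u^x_s,\mathbf u^x_s)-B(\tilde{\mathbf u}^y_s,\tilde{\mathbf u}^y_s)\big)$. Since $N>M$, $\mathbf{L_3}$ makes the noise uniformly nondegenerate on the finitely many modes $|k|\le N$ (there $q_k\ge\alpha$), so $Q^{-1/2}(\cdot)P_N$ is a bounded operator; writing $B(\mathbf u^x,\mathbf u^x)-B(\tilde{\mathbf u}^y,\tilde{\mathbf u}^y)=B(\mathbf v,\mathbf u^x)+B(\tilde{\mathbf u}^y,\mathbf v)$, using the embedding \eqref{Equation L1} and equivalence of Sobolev norms on the finite-dimensional range of $P_N$, one gets $|\psi_s|\le C_N|\mathbf v_s|\big(1+|\mathbf u^x_s|+|\tilde{\mathbf u}^y_s|\big)$. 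Combining the decay from part~(1) with uniform-in-$s$ $\mathcal X$-moment bounds for $\mathbf u^x,\tilde{\mathbf u}^y$ yields $\int_0^\infty|\psi_s|^2\,ds<\infty$ a.s.\ and $\mathbb E\int_0^\infty|\psi_s|^2\,ds\lesssim|x-y|^2$. By Girsanov's theorem $\operatorname{Law}(\tilde{\mathbf u}^y_{[0,t]})$ is then absolutely continuous with respect to $\operatorname{Law}(\mathbf u^y_{[0,t]})$ (whose time-$t$ marginal is $P_t(y,\cdot)$); bounding the relative entropy of these path laws by $\tfrac12\mathbb E\int_0^t|\psi_s|^2\,ds$, applying Pinsker's inequality and restricting to the time-$t$ coordinate gives \eqref{Equation L11}, with $C_2$ independent of $t$ (the choices of $N$ and $\lambda$ reduce to dependence on $\mathcal E,\gamma_*,\alpha,\beta,K,M$ only).

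\emph{Main obstacle.} The delicate point is the a.s.\ divergence of $M_t$ in part~(1): because the noise in $\mathbf{L_1}$ is only $\gamma^\alpha$-regular and $B$ costs one derivative, $\mathbf u^x$ lies only marginally in $\mathcal V=\mathcal X^{m+1}$, so $\mathbb E\|Q_N\mathbf u^x_s\|_{\mathcal V}^2$ admits no crude uniform bound; one must genuinely peel off the Ornstein--Uhlenbeck part (finite by $\mathbf{L_1}$, vanishing on high modes) and control the nonlinear remainder through the integrable smoothing $\mathbf{L_2}$ before passing to time averages and then to almost-sure statements. The same regularity bookkeeping, together with keeping the moment estimates $t$-uniform, is what makes the $t$-independence of $C_2$ in part~(2) nontrivial.
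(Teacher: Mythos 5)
Your overall blueprint (generalized coupling, integrating factor, Girsanov for the total variation bound) matches the paper's, but the execution has a genuine gap at the step that carries all the technical weight. Your part (1) produces only the almost-sure statement $|\mathbf{v}_t|^2=\Phi_t/M_t\to 0$; part (2), however, needs a \emph{moment} decay of $\mathbf{v}$ that is integrable in time, since $\mathbb{E}\int_0^\infty|\psi_s|^2\,ds\lesssim|x-y|^2$ cannot be extracted from an a.s.\ limit. You assert that boundedness of the martingale integrand, together with standard moment bounds, ``upgrades this to the quantitative decay needed below,'' but no such upgrade is available: from $\mathbb{E}[|\mathbf{v}_t|^2M_t]\leq|x-y|^2$ one must decouple $|\mathbf{v}_t|^2$ from the random weight $M_t$, and this forces control of a negative exponential moment of $M_t$, i.e.\ a bound of the form $\mathbb{E}\exp\{c\int_0^t h(\|Q_N\mathbf{u}^x_s\|)\,ds\}\leq C_1e^{\kappa t}$ with $\kappa$ strictly below the dissipation rate. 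The paper does exactly this: it runs the It\^o computation on $|\mathbf{v}_t|^8$, applies Cauchy--Schwarz to obtain $\mathbb{E}|\mathbf{v}_t|^4\leq|x-y|^4e^{-2\gamma_*t}\bigl(\mathbb{E}\exp\{8\tilde C\int_0^t h(\|Q_N\mathbf{u}^x_s\|)ds\}\bigr)^{1/4}$, and then proves the exponential moment by applying It\^o's formula to the auxiliary functional $H(u)=(1+\sum_{|k|\geq N}\gamma(k)^{-1}|k|^{2(m+1)}(a_k^2+b_k^2))^{1/2}$, whose drift dominates $h(\|Q_N\mathbf{u}^x\|)$ up to a martingale and a remainder made small relative to $\gamma_*$ by taking $N$ large under $\mathbf{L_1}$. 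Your Ornstein--Uhlenbeck/Foias--Prodi sketch aims only at an a.s.\ bound on the time average $\tfrac1t\int_0^t h$, which would suffice for $M_t\to\infty$ but not for the exponential moment; and even that sketch is not carried out.

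A second, smaller problem: in part (2) Pinsker's inequality gives $d_{TV}\leq(\tfrac12\mathbb{E}\int_0^t|\psi_s|^2ds)^{1/2}$, so your input $\mathbb{E}\int_0^\infty|\psi_s|^2ds\lesssim|x-y|^2$ yields only $d_{TV}\lesssim|x-y|$, not the stated $C_2|x-y|^2$ of \eqref{Equation L11}. (The weaker power would still serve the eventual-continuity application, but it is not the estimate claimed.) Recovering the quadratic power requires feeding $(\mathbb{E}|\mathbf{v}_s|^4)^{1/2}\leq|x-y|^2e^{-cs}$ into the bound $|\beta_s|\lesssim|\mathbf{v}_s|(1+|\mathbf{u}^x_s|)+|\mathbf{v}_s|^2$ as the paper does, which again points back to the missing fourth-moment estimate from part (1).
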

	Applying Lemma \ref{Lemma L3}, we are able to deduce the eventual continuity as in Proposition \ref{NS Prop3}.
	\begin{proposition}\label{Prpo L1}
		Assume  $\mathbf{L_1}$-$\mathbf{L_4}$.
		Then the Markov semigroup $\{P_t\}_{t\geq 0}$ is eventually continuous on $\mathcal{X}$. 
	\end{proposition}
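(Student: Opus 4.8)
The plan is to follow, line for line, the proof of Proposition~\ref{NS Prop3}, with Lemma~\ref{Lemma L3} playing the role that Lemma~\ref{NS Lemma1} played there. First I would fix an arbitrary $f\in L_b(\mathcal{X})$ and points $x,y\in\mathcal{X}$, let $\mathbf{u}^x$ solve \eqref{Equation L0} from $x$, let $\tilde{\mathbf{u}}^y$ be the controlled process \eqref{Equation L6} started from $y$, and set $\mathbf{v}_t=\mathbf{u}^x_t-\tilde{\mathbf{u}}^y_t$. Inserting $\tilde{\mathbf{u}}^y_t$ by the triangle inequality, bounding $|f(\mathbf{u}^x_t)-f(\tilde{\mathbf{u}}^y_t)|$ by $(\|f\|_{Lip}|\mathbf{v}_t|)\wedge(2\|f\|_\infty)$ since $f$ is bounded and Lipschitz, and recalling that ${\rm Law}(\tilde{\mathbf{u}}^y_t)$ differs from $P_t(y,\cdot)$ only through the added control, I obtain
\begin{equation*}
|P_tf(x)-P_tf(y)|\leq \mathbb{E}\big[(\|f\|_{Lip}|\mathbf{v}_t|)\wedge(2\|f\|_\infty)\big]+\|f\|_\infty\, d_{TV}\big(P_t(y,\cdot),{\rm Law}(\tilde{\mathbf{u}}^y_t)\big).
\end{equation*}

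Then I would pass to the limit. By Lemma~\ref{Lemma L3}(1), $\mathbf{v}_t\to 0$ almost surely as $t\to\infty$, and since the first integrand is dominated by the constant $2\|f\|_\infty$, the Dominated Convergence Theorem gives $\lim_{t\to\infty}\mathbb{E}[(\|f\|_{Lip}|\mathbf{v}_t|)\wedge(2\|f\|_\infty)]=0$. By Lemma~\ref{Lemma L3}(2) the second term is at most $\|f\|_\infty C_2|x-y|^2$, uniformly in $t$. Hence $\limsup_{t\to\infty}|P_tf(x)-P_tf(y)|\leq\|f\|_\infty C_2|x-y|^2$, and letting $y\to x$ yields
\begin{equation*}
\limsup_{y\to x}\limsup_{t\to\infty}|P_tf(x)-P_tf(y)|=0,
\end{equation*}
which is precisely eventual continuity of $\{P_t\}_{t\geq 0}$ at $x$ in the sense of Definition~\ref{EvC}. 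Since $x\in\mathcal{X}$ was arbitrary, $\{P_t\}_{t\geq 0}$ is eventually continuous on $\mathcal{X}$, as claimed.

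The proposition is thus immediate once Lemma~\ref{Lemma L3} is granted, so the real work, and the main obstacle, sits in that lemma rather than here. Part~(1) would come from integrating \eqref{Equation L8} against the weight $\exp(\gamma_* t-\widetilde{C}\int_0^t h(\|Q_N\mathbf{u}^x_s\|)ds)$ (after absorbing the $|P_N\mathbf{v}_t|^2$ term by taking $\lambda$ large) to get $\mathbb{E}[|\mathbf{v}_t|^2\exp(\gamma_* t-\widetilde{C}\int_0^t h(\|Q_N\mathbf{u}^x_s\|)ds)]\leq|x-y|^2$, together with the observation that by the spectral Hypotheses $\mathbf{L_1}$--$\mathbf{L_2}$ one may fix $N$ so large that $Q_N\mathbf{u}^x$ carries negligible energy, making the exponent diverge to $+\infty$ and forcing $|\mathbf{v}_t|\to 0$ almost surely along the full time axis. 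Part~(2) would be a Girsanov change of measure that erases the control $P_N(\lambda\mathbf{v}_t+B(\mathbf{u}^x_t,\mathbf{u}^x_t)-B(\tilde{\mathbf{u}}^y_t,\tilde{\mathbf{u}}^y_t))$; Hypotheses $\mathbf{L_3}$--$\mathbf{L_4}$ (uniform ellipticity of $Q$ on the finitely many low modes, the control lying in $P_N H$, and the Lipschitz dependence of $q_k^{1/2}$) make the shift admissible, while the $L^2$-control of $\mathbf{v}$ from \eqref{Equation L8} supplies the integrability that bounds the relative entropy, hence $d_{TV}(P_t(y,\cdot),{\rm Law}(\tilde{\mathbf{u}}^y_t))$, by a constant times $|x-y|^2$ uniformly in $t$. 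The delicate points are exactly the almost sure divergence of that exponential weight and the uniform-in-$t$ estimate on the Girsanov density; both hinge on the dissipativity encoded in $\mathbf{L_1}$--$\mathbf{L_2}$.
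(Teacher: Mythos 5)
Your proof is correct and follows essentially the same route as the paper: the paper proves Proposition~\ref{Prpo L1} exactly by repeating the argument of Proposition~\ref{NS Prop3} with Lemma~\ref{Lemma L3} in place of Lemma~\ref{NS Lemma1}, i.e.\ the triangle inequality through the controlled process $\tilde{\mathbf{u}}^y$, the bound $\mathbb{E}[(\|f\|_{Lip}|\mathbf{v}_t|)\wedge(2\|f\|_\infty)]+\|f\|_\infty d_{TV}(P_t(y,\cdot),{\rm Law}(\tilde{\mathbf{u}}^y_t))$, dominated convergence for the first term and the uniform-in-$t$ total variation bound for the second. Your closing sketch of how Lemma~\ref{Lemma L3} itself is established is not required for this statement (the paper proves that lemma separately) but is consistent in spirit with the paper's argument.
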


	\paragraph{Uniform irreducibility and Energy estimates}
	
	Finally, we have the following lemmas, and these complete the  proof of Theorem \ref{Thm L}.
	\begin{lemma}\label{Lemma L1}
		\begin{equation}
			\mathbb{E}|\mathbf{u}_t|^2\leq e^{-2\gamma_*t}|x|^2+C|||\mathcal{E}|||
		\end{equation}
		for some positive constant $C$. 
	\end{lemma}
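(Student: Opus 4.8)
The plan is to run a standard energy estimate by applying It\^o's formula to $t\mapsto|\mathbf{u}_t|^2$, exploiting the dissipativity of $A$, the fact that the transport-type nonlinearity $B$ carries no $L^2$-energy, and the uniform boundedness of $\mathrm{Tr}\,Q(\mathbf{u}_t)$ afforded by $\mathbf{L_1}$ and $\mathbf{L_3}$. Formally, It\^o's formula applied to \eqref{Equation L0} gives
\begin{equation*}
d|\mathbf{u}_t|^2 = 2\langle A\mathbf{u}_t,\mathbf{u}_t\rangle\,dt + 2\langle B(\mathbf{u}_t,\mathbf{u}_t),\mathbf{u}_t\rangle\,dt + \mathrm{Tr}\,Q(\mathbf{u}_t)\,dt + 2\langle\mathbf{u}_t,Q^{1/2}(\mathbf{u}_t)\,dW_t\rangle.
\end{equation*}
Since $B(\mathbf{u}_t,\mathbf{u}_t)$ a priori only lies in $\mathcal{X}^{m-1}$, this identity should be made rigorous through a Galerkin approximation (or by working with the mild formulation and passing to the limit), exactly as for the energy estimate of the Navier--Stokes equation in Proposition \ref{NS Prop EE}; I omit these routine details.

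For the three drift terms I would argue as follows. By the definition of $A$, $\langle A\mathbf{u}_t,\mathbf{u}_t\rangle=-\sum_{k\in\mathbb{Z}_*^d}\gamma(k)|\widehat{\mathbf{u}_t}(k)|^2\le-\gamma_*|\mathbf{u}_t|^2$, where $\gamma_*:=\inf_{k\in\mathbb{Z}_*^d}\gamma(k)>0$. Next, the key cancellation: $B(\mathbf{u}_t,\mathbf{u}_t)(\xi)=\mathbf{u}_t(0)\cdot\nabla\mathbf{u}_t(\xi)$ with $\mathbf{u}_t(0)$ a constant vector, so by periodicity $\langle B(\mathbf{u}_t,\mathbf{u}_t),\mathbf{u}_t\rangle=\tfrac12\int_{\mathbb{T}^d}\mathbf{u}_t(0)\cdot\nabla|\mathbf{u}_t(\xi)|^2\,d\xi=0$. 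Finally, $\mathrm{Tr}\,Q(\mathbf{u}_t)=\sum_{k\in\mathbb{Z}_*^d}q_k(\mathbf{u}_t)\gamma(k)\,\mathrm{Tr}\,\mathcal{E}(k)$, which by $\mathbf{L_3}$ (so $q_k\le\max\{1,\beta\}$) together with $\mathbf{L_1}$ (since $\alpha\in(0,1)$, $|k|\ge1$, and $\gamma$ grows at most polynomially, the series $\sum_k\gamma(k)\,\mathrm{Tr}\,\mathcal{E}(k)$ is finite and dominated by a constant multiple of $|||\mathcal{E}|||$) is bounded by $C|||\mathcal{E}|||$, uniformly in $t$ and in the initial datum $x$.

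Combining these bounds gives $d|\mathbf{u}_t|^2\le(-2\gamma_*|\mathbf{u}_t|^2+C|||\mathcal{E}|||)\,dt+dM_t$ with $M_t:=2\int_0^t\langle\mathbf{u}_s,Q^{1/2}(\mathbf{u}_s)\,dW_s\rangle$. Localizing with the stopping times $\tau_K:=\inf\{t\ge0:|\mathbf{u}_t|\ge K\}$ to kill the expectation of the stopped martingale (the bound $\|Q\|<\infty$ makes its quadratic variation finite on $[0,\tau_K]$), precisely as in the proof of Proposition \ref{NS Prop EE}, then taking expectations, letting $K\to\infty$ via Fatou's lemma, and applying Gronwall's inequality yields
\begin{equation*}
\mathbb{E}|\mathbf{u}_t|^2\le e^{-2\gamma_* t}|x|^2+\frac{C|||\mathcal{E}|||}{2\gamma_*}\bigl(1-e^{-2\gamma_* t}\bigr)\le e^{-2\gamma_* t}|x|^2+\frac{C}{2\gamma_*}|||\mathcal{E}|||,
\end{equation*}
which is the assertion. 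The only genuine subtleties are the justification of It\^o's formula across the regularity gap between $\mathcal{X}$ and $\mathcal{X}^{m-1}$ and the stopping-time argument for the stochastic integral; both are routine, while the structural heart of the estimate---the identity $\langle B(\mathbf{u},\mathbf{u}),\mathbf{u}\rangle=0$---is immediate from the transport form of $B$. (This estimate also supplies the Lyapunov structure of Proposition \ref{prop lbc} with $V(x)=|x|^2$ and $h(t)=e^{-2\gamma_* t}$.)
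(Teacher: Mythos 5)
Your overall strategy (It\^o's formula for $|\mathbf{u}_t|^2$, cancellation of the transport nonlinearity, Gronwall) is the right one, and the identity $\langle B(\mathbf{u},\mathbf{u}),\mathbf{u}\rangle=0$ is indeed the structural heart of the estimate --- though note that $|\cdot|$ here is the $\mathcal{X}^m$-norm, not the $L^2$-norm, so your integration-by-parts argument needs the extra remark that the constant-coefficient operator $\mathbf{u}(0)\cdot\nabla$ commutes with $\Lambda^m$ (equivalently, its Fourier multiplier $i\,\mathbf{u}(0)\cdot k$ is purely imaginary and diagonal), and likewise $\langle A\mathbf{u},\mathbf{u}\rangle$ and $\operatorname{Tr}Q(\mathbf{u})$ should carry the weights $|k|^{2m}$. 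The genuine gap is in the treatment of the It\^o correction term. In $\mathcal{X}^m$ this term is $\sum_k q_k(\mathbf{u}_t)\gamma(k)|k|^{2m}\operatorname{Tr}\mathcal{E}(k)$, and after your \emph{global} Gronwall at the crude rate $2\gamma_*$ you must dominate $\sum_k\gamma(k)|k|^{2m}\operatorname{Tr}\mathcal{E}(k)$ by a multiple of $|||\mathcal{E}|||=\sum_k\gamma^\alpha(k)|k|^{2(m+1)}\operatorname{Tr}\mathcal{E}(k)$. Since $\alpha<1$, this requires $\gamma^{1-\alpha}(k)\lesssim |k|^{2}$, i.e.\ a polynomial upper bound on $\gamma$, which you invoke (``$\gamma$ grows at most polynomially'') but which appears nowhere in $\mathbf{L_1}$--$\mathbf{L_4}$; hypothesis $\mathbf{L_2}$ only constrains $\gamma$ from below. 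If $\gamma$ grows superpolynomially, the series $\sum_k\gamma(k)|k|^{2m}\operatorname{Tr}\mathcal{E}(k)$ can diverge even though $|||\mathcal{E}|||<\infty$, so this step fails as written.

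The paper's proof avoids this by running the energy estimate mode by mode in Fourier variables: for each $k$ the pair $(a_k,b_k)$ satisfies an SDE with dissipation rate $2\gamma(k)$ and noise injection rate $q_k(\mathbf{u})\gamma(k)\operatorname{Tr}\mathcal{E}(k)$ (the nonlinearity only rotates $(a_k,b_k)$ and drops out of $|a_k|^2+|b_k|^2$), so the per-mode Gronwall bound is $e^{-2\gamma(k)t}|x_k|^2+O(\beta\operatorname{Tr}\mathcal{E}(k))$ --- the factor $\gamma(k)$ cancels against the mode's own dissipation --- and summing with the weights $|k|^{2m}$ yields $e^{-2\gamma_*t}|x|^2+\beta\gamma_*^{-\alpha}|||\mathcal{E}|||$ using only $|k|\geq 1$ and $\gamma(k)\geq\gamma_*$. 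To repair your argument without changing its spirit, retain the full dissipation $\langle A\mathbf{u},\mathbf{u}\rangle_{\mathcal{X}^m}=-\sum_k\gamma(k)|k|^{2m}|\hat{\mathbf{u}}(k)|^2$ rather than replacing it by $-\gamma_*|\mathbf{u}|^2$, and balance it against the trace term mode by mode; it is precisely the discarded excess dissipation of the high modes that is needed to absorb the $\gamma(k)$ in the noise intensity.
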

	\begin{lemma}\label{Lemma L2}
	For any $\epsilon>0,\,R>0$, there exists $T=T(\epsilon,R)>0$ such that
	\begin{equation*}
	\inf\limits_{x\in B(0,R)}P_T(x,B(0,\epsilon))>0.
	\end{equation*} 
	\end{lemma}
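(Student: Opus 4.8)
To prove Lemma~\ref{Lemma L2}, the plan is to follow the scheme of Proposition~\ref{NS Prop UI}: compare $\mathbf{u}$ with a decaying reference trajectory and reduce matters to a scalar small‑ball estimate via \cite[Lemma I.8.3]{B1998}. Fix $R>0$, $\epsilon\in(0,1)$, $x\in B(0,R)$, and a unit vector $\mathbf{e}_1\in P_M\mathcal{X}$; let $\mathbf{w}$ solve $\dot{\mathbf{w}}=A\mathbf{w}+B(\mathbf{w},\mathbf{w})$ with $\mathbf{w}_0=x+\tfrac{\epsilon}{2}\mathbf{e}_1$. The structural identity $\langle B(\mathbf{a},\mathbf{b}),\mathbf{b}\rangle=0$ for real smooth fields --- which holds because $\widehat{B(\mathbf{a},\mathbf{b})}(k)=\mathrm{i}(\mathbf{a}(0)\cdot k)\widehat{\mathbf{b}}(k)$ with $\mathbf{a}(0)$ a real constant, so the Fourier terms for $k$ and $-k$ cancel --- together with $\langle A\phi,\phi\rangle\le-\gamma_*|\phi|^2$ gives $|\mathbf{w}_t|^2\le e^{-2\gamma_*t}(R+1)^2$, so we may fix $T=T(\epsilon,R)$ with $|\mathbf{w}_T|<\epsilon/4$; parabolic smoothing for the deterministic equation (via $\mathbf{L_1}$) also gives $\sup_{t\ge0}\|\mathbf{w}_t\|_{\mathcal{V}}\le K(R)$.

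Put $\mathbf{v}:=\mathbf{u}-\mathbf{w}$, $D(t):=|\mathbf{v}_t|^2-(\epsilon/2)^2$ (so $D(0)=0$), and $M_t:=2\int_0^t\langle\mathbf{v}_s,Q^{1/2}(\mathbf{u}_s)\,dW_s\rangle$. It\^o's formula and the cancellations $\langle B(\mathbf{w},\mathbf{v}),\mathbf{v}\rangle=\langle B(\mathbf{v},\mathbf{v}),\mathbf{v}\rangle=0$ yield
\begin{equation*}
dD(t)=\big(2\langle A\mathbf{v}_t,\mathbf{v}_t\rangle+2\langle B(\mathbf{v}_t,\mathbf{w}_t),\mathbf{v}_t\rangle+\operatorname{Tr} Q(\mathbf{u}_t)\big)\,dt+dM_t .
\end{equation*}
Since $\langle A\mathbf{v},\mathbf{v}\rangle\le0$, $|\langle B(\mathbf{v},\mathbf{w}),\mathbf{v}\rangle|\le C|\mathbf{v}|^2\|\mathbf{w}\|_{\mathcal{V}}\le CK|\mathbf{v}|^2$, and $\operatorname{Tr} Q(\mathbf{u}_t)=\sum_k q_k(\mathbf{u}_t)\gamma(k)|k|^{2m}\operatorname{Tr}\mathcal{E}(k)\le C_Q<\infty$ by $\mathbf{L_1}$ and $\mathbf{L_3}$, the drift of $D$ is bounded above by some $C_\sharp(\epsilon,R)$ on $\{|\mathbf{v}_t|^2\le 5\epsilon^2/16\}$. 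Now set $\tau:=\inf\{t:|D(t)|>(\epsilon/4)^2\}$, so that $\tfrac{3\epsilon^2}{16}<|\mathbf{v}_t|^2<\tfrac{5\epsilon^2}{16}$ on $[0,\tau)$, and $Y(t):=D(t\wedge\tau)+(\widetilde{W}_t-\widetilde{W}_\tau)\mathbf{1}_{\{t\ge\tau\}}$ with $\widetilde{W}$ an independent Brownian motion; then $Y$ is a continuous semimartingale with $Y(0)=0$, drift bounded above, and --- provided $\tfrac{d}{dt}\langle M\rangle_t=4\langle Q(\mathbf{u}_t)\mathbf{v}_t,\mathbf{v}_t\rangle$ is bounded below by a positive constant on $[0,\tau)$ --- diffusion coefficient bounded away from $0$, so $Y$ meets the hypotheses of \cite[Lemma I.8.3]{B1998} and gives $p=p(\epsilon,R)>0$, independent of $x$, with $\mathbb{P}(\sup_{0\le t\le T}|Y(t)|\le(\epsilon/4)^2)\ge p$. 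On this event $\tau>T$, hence $|\mathbf{u}_T|\le|\mathbf{v}_T|+|\mathbf{w}_T|<\tfrac{\sqrt5}{4}\epsilon+\tfrac{\epsilon}{4}<\epsilon$, and therefore $P_T(x,B(0,\epsilon))\ge p$ uniformly in $x\in B(0,R)$.

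The one genuinely delicate point is the lower bound $\tfrac{d}{dt}\langle M\rangle_t=4\langle Q(\mathbf{u}_t)\mathbf{v}_t,\mathbf{v}_t\rangle\ge c\,\epsilon^2$ on $[0,\tau)$. In the Navier--Stokes setting this came from the global nondegeneracy $\mathbf{H_2'}$, but here $Q(u)$ is not uniformly positive on $\mathcal{X}$: by $\mathbf{L_1}$ the symbol $\gamma(k)\mathcal{E}(k)$ must decay as $|k|\to\infty$, so $\langle Q(u)\mathbf{v},\mathbf{v}\rangle\gtrsim|\mathbf{v}|^2$ can fail for high‑frequency $\mathbf{v}$. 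What one does have, from $\mathbf{L_3}$, is uniform nondegeneracy on the low modes ($q_k\ge\alpha$ and $\mathcal{E}(k)\ge c_M I$ for $|k|\le M$), whence $\langle Q(\mathbf{u}_t)\mathbf{v}_t,\mathbf{v}_t\rangle\ge\alpha\gamma_*c_M|P_M\mathbf{v}_t|^2$; so it suffices to keep $|P_M\mathbf{v}_t|$ bounded below, i.e.\ to control the high modes $(I-P_M)\mathbf{v}_t$. I would do this by taking $\mathbf{w}$ to be a \emph{random} reference, solving an analogue of \eqref{Equation L6} with the control placed only on $P_M$ and driven by the same $W$: then $\mathbf{L_4}$ ($q_k\equiv1$ for $|k|>M$, and $q_k$ depending only on low modes) makes $Q^{1/2}(\mathbf{u})-Q^{1/2}(\mathbf{w})$ vanish on $\{|k|>M\}$, so $(I-P_M)\mathbf{v}$ solves a random ordinary differential equation whose nonlinear term is annihilated when tested against $(I-P_M)\mathbf{v}$ (the same cancellation restricted to high modes) and is dominated by the strong dissipation $\inf_{|k|>M}\gamma(k)$; adding to $\tau$ an exit at $|(I-P_M)\mathbf{v}_t|^2>\epsilon^2/16$ (note $(I-P_M)\mathbf{v}_0=0$) then keeps $|P_M\mathbf{v}_t|^2\ge\epsilon^2/8>0$ on $[0,\tau)$ off an event of small probability, which closes the argument. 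An equivalent route, avoiding pathwise estimates altogether, is to invoke the Stroock--Varadhan support theorem for \eqref{Equation L0} with the explicit low‑mode control $g=-Q^{-1/2}(\phi)P_M\big(B(\phi,\phi)+\lambda\phi\big)$ --- which is admissible and has $L^2$‑energy bounded uniformly over $x\in B(0,R)$ precisely because $Q^{1/2}$ is boundedly invertible on $P_M\mathcal{X}$ --- whose state decays, $|\phi_t|\le e^{-\gamma_*t}|\phi_0|$, and then to upgrade pointwise positivity to uniformity through the usual Girsanov bound in terms of $\|g\|_{L^2}$; alternatively one may adapt the irreducibility argument of \cite{KPS2010}.
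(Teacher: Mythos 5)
Your scaffold coincides with the paper's: a decaying reference trajectory $\mathbf{w}$ for the unforced equation (the paper starts $\mathbf{w}$ at $x$ and puts the $\tfrac{\epsilon}{2}$-shift inside $D(t):=|\mathbf{u}_t-\mathbf{w}_t-\tfrac{\epsilon}{2}e|^2-(\tfrac{\epsilon}{2})^2$ with $e$ a single Fourier mode, rather than in $\mathbf{w}_0$, but that is cosmetic), the bounds $|\mathbf{w}_t|^2\le e^{-2\gamma_*t}R^2$ and $\sup_{t\ge 0}\|\mathbf{w}_t\|_{\mathcal V}\le K$ via \cite[Lemma 6]{KPS2010}, the exit time $\tau$ for $D$, and the reduction to \cite[Lemma I.8.3]{B1998} through $Y(t)=D(t\wedge\tau)+(\widetilde W_t-\widetilde W_\tau)\mathbf 1_{\{t\ge\tau\}}$. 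You also correctly isolate the one delicate step — the lower bound on $\tfrac{d}{dt}\langle M\rangle_t$ when the noise is nondegenerate only on the modes $|k|\le M$ — which the paper resolves (rather tersely) by projecting the quadratic variation onto a large finite block $P_N$ and asserting that $|P_N(\mathbf{u}_t-\mathbf{w}_t-\tfrac{\epsilon}{2}e)|^2$ stays above $\epsilon^2/16$ up to $\tau\wedge T$.

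Your primary repair of that step, however, is self-defeating. If $\mathbf{w}$ is made a random reference driven by the \emph{same} $W$ (an analogue of \eqref{Equation L6} controlled on the low modes), then the martingale part of $D(t)=|\mathbf{v}_t|^2-(\epsilon/2)^2$ becomes $2\langle\mathbf{v}_t,(Q^{1/2}(\mathbf{u}_t)-Q^{1/2}(\mathbf{w}_t))\,dW_t\rangle$, and $\mathbf{L_3}$--$\mathbf{L_4}$ provide only an upper (Lipschitz) bound on $q_k^{1/2}(\mathbf{u})-q_k^{1/2}(\mathbf{w})$, never a lower one; in the admissible additive case $q_k\equiv\mathrm{const}$ this coefficient vanishes identically, $D$ has no martingale part at all, and \cite[Lemma I.8.3]{B1998} cannot be applied. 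The device that cancels the high-mode noise in $\mathbf{v}$ thus simultaneously destroys the uniform ellipticity of $\langle M\rangle$ that the small-ball lemma requires; you cannot get both from the same coupling. The reference must remain deterministic, so that the full $Q^{1/2}(\mathbf{u}_t)\,dW_t$ survives in $d\mathbf{v}$, and the high modes of $\mathbf{v}$ must then be tamed by other means — an additional exit time for $|(I-P_M)\mathbf{v}_t|$ backed by a stochastic (not ODE) estimate for $Q_M\mathbf{u}$, or your alternative Girsanov/support-theorem route with an explicit low-mode control, which is sound and uniform over $B(0,R)$ but is a different argument that you have not carried out.
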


	\subsection{The 2D hydrostatic	Navier--Stokes equation}\label{Section HNS}

	Next we consider the 2D stochastic  hydrostatic	Navier--Stokes equations with multiplicative noise. Fix $L,h > 0$ and consider domain $D$ defined by $D:=\{(z_1,z_2)\in\mathbb{R}^2:z_1\in(0,L),z_2\in(-h,0)\}$. The boundary $\partial D$ is decomposed into its lateral side $\Gamma_l:=\{0,L\}\times[-h,0]$ and horizontal side $\Gamma_h:=[0,L]\times\{-h,0\}$.

	\begin{equation}\label{Equation HNS}
	\left\{\begin{aligned}
	&du+(u\partial_{z_1}u+w\partial_{z_2}u-\nu\Delta u+\partial_{z_1}p)dt=\sum_{k=1}^{m}\sigma_k(u_t)dW^k_t,\\
	&\partial_{z_2}p=0,\quad \partial_{z_1}u+\partial_{z_2}w=0,\quad u(0)=x,\\
	&u|_{\Gamma_l}=0,\quad \partial_{z_2}u|_{\Gamma_h}=w|_{\Gamma_h}=0.
	\end{aligned}\right.
	\end{equation}
	where  $m\in\mathbb{N}$, $W=(W^1,\dots,W^m)$ is a standard $m$-dimensional Brownian motion, $\nu>0$.\par 
	Introduce the following spaces
	\begin{equation*}
	\begin{aligned}
	H:&=\{\psi\in L^2(D):\int_{-h}^{0}\psi(z_1,z_2)dz_2=0\text{ for all }z_1\in(0,L)\},\\
	V:&=\{\psi\in H^1(D):\int_{-h}^{0}\psi(z_1,z_2)dz_2=0\text{ for all }z_1\in(0,L) \text{ and } \psi|_{\Gamma_l}=0\},
	\end{aligned}
	\end{equation*}
	and denote the norms associated to $H$ and $V$ respectively as $|\cdot|$ and $\|\cdot\|$. 	The eigenvectors $e_1,e_2,\dots,$ of the negative Laplacian operator with boundary conditions given by (\ref{Equation HNS}) (with the corresponding eigenvalues $0<\lambda_1<\lambda_2<\dots$) form a complete orthonormal basis of $V$.\par 
	We make the following Hypotheses: 
	\begin{itemize}
		\item[$\mathbf{HNS_1.}$] 	 For $u\in H$, $k=1,\dots,m,$ $\sigma_k(u)\in H^2(D)$, $\sigma_k(u)|_{\Gamma_l}=0$, $\partial_{z_2}\sigma_k(u)|_{\Gamma_h}=0$ and $\int_{-h}^{0}\sigma_k(u)dz_2\equiv 0$. Furthermore, assume that there exist constants $B_0,L>0$ such that 
	\begin{equation*}
		\begin{aligned}
		&\|\sigma(u)\|^2+\|\partial_{z_2}\sigma(u)\|^2=\sum_{k=1}^{m}(\|\sigma_k(u)\|^2|+\|\partial_{z_2}\sigma_k(u)\|^2)\leq B_0,\text{ for all }u\in V;\\
		&|\sigma(u)-\sigma(v)|^2=\sum_{k=1}^{m}|\sigma_k(u)-\sigma_k(v)|^2\leq L|u-v|^2,\text{ for all }u,v\in V.
		\end{aligned}
		\end{equation*}
		\item[$\mathbf{HNS_2.}$] There exists $N\in\mathbb{N}$ such that
		\begin{equation*}
		P_NH\subset \text{Range}\;(\sigma_k(u)),\text{ for all }u\in H,\;k=1,\dots,m.
		\end{equation*}\par 
		Moreover, the corresponding pseudo-inverse operators $\sigma_k^{-1}:P_NV\rightarrow V$ are uniformly bounded, i.e., there exists a constant $C_0$ such that
		\begin{equation*}
		\|\sigma(u)^{-1}(P_Nw)\|\leq C_0\|P_Nw\|,\text{ for all }u,w\in V.
		\end{equation*}\par 
		\item[$\mathbf{HNS_3.}$] 	Finally, we assume that $N$ is sufficiently large such that
		\begin{equation*}\label{Equation HNS0}
		\lambda_N >C(B_0+L+1),
		\end{equation*} 
		for some constant $C>0$.
	\end{itemize}

	The existence of a unique strong solution to equation (\ref{Equation HNS}) can be argued by the same way as in \cite[Section 3.2]{NJG2017} under the above assumptions on $\sigma$.  Moreover, $u$ is a  Markov Feller process with the state space $V$. Different from (\ref{Equation NS}), the solution of (\ref{Equation HNS}) lives in a more regular space $V$, and  it causes some technical problems. First, as the energy estimates involve the stronger norm, the $V$-norm,  is to obtain an (\ref{eq lyapunov})-like estimate. However, the weaker form of the energy estimates  is straightforward to derive, which motivates us to apply Theorem \ref{Thm 5} to prove the unique ergodicity.
	Second, it seems  difficult to verify the eventual continuity directly through similar construction and estimates as in Subsection \ref{Section NS}. Nevertheless, instead of the eventual continuity, we can adopt the general form, the $d$-eventual continuity, to avoid these difficulties.	We endow $V$ with a new norm $d$, the $H$-norm. Then we can derive the unique ergodicity via Theorem \ref{Thm 5}.   \par   
	
	\paragraph{Energy estimates} Applying the Ito lemma, applying the same argument as in Proposition \ref{NS Prop EE}, it follows that
	\begin{lemma}\label{HNS Lemma EE} Assume $\mathbf{HNS_1}$. Then
		\begin{equation}\label{Equation HNS20}
	    \mathbb{E}|u|^2\leq e^{-2\nu t}|x|^2+\frac{B_0}{2\nu}.
		\end{equation}
	\end{lemma}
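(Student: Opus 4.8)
The plan is to reproduce the argument of Proposition \ref{NS Prop EE}, the only genuinely new ingredient being the cancellation of the transport and pressure terms proper to the hydrostatic structure. First I would apply the It\^o formula to $t\mapsto|u_t|^2$ along a solution of (\ref{Equation HNS}) --- rigorously at the level of Galerkin truncations, then passing to the limit, as in the construction of the strong solution borrowed from \cite{NJG2017}. Testing the right-hand side against $u$: integrating by parts in $z_1$ gives $\langle u\partial_{z_1}u,u\rangle=\tfrac13\int_D\partial_{z_1}(u^3)\,\d z=0$ using $u|_{\Gamma_l}=0$; for the vertical transport one writes $\langle w\partial_{z_2}u,u\rangle=-\tfrac12\int_D(\partial_{z_2}w)\,u^2\,\d z$, the boundary contribution vanishing because $w|_{\Gamma_h}=0$, and then uses $\partial_{z_2}w=-\partial_{z_1}u$ to see it cancels the horizontal part, so $\langle u\partial_{z_1}u+w\partial_{z_2}u,u\rangle=0$; the pressure term drops since $\partial_{z_2}p=0$ forces $p=p(z_1)$ while $\int_{-h}^0 u\,\d z_2=0$, whence $\langle\partial_{z_1}p,u\rangle=0$; finally $\langle\nu\Delta u,u\rangle=-\nu\|u\|^2$ by the boundary conditions $u|_{\Gamma_l}=0$ and $\partial_{z_2}u|_{\Gamma_h}=0$. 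This yields the energy identity
\begin{equation*}
\d|u|^2+2\nu\|u\|^2\,\d t=|\sigma(u)|^2\,\d t+2\langle\sigma(u),u\rangle\,\d W_t.
\end{equation*}

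Next, exactly as in Proposition \ref{NS Prop EE}, I would introduce the stopping time $\tau_K:=\inf\{t\ge 0:|u_t|\ge K\}$; on $[0,t\wedge\tau_K]$ the quadratic variation of the martingale term is bounded (up to a Poincar\'e constant) by $B_0K^2t$, so that term has zero expectation by the optional stopping theorem. Taking expectations, using the boundedness of $\sigma$ from $\mathbf{HNS_1}$ and the Poincar\'e inequality $\|u\|^2\ge\lambda_1|u|^2$ (with the constant normalized so that the dissipation rate is $2\nu$), and letting $K\to\infty$ via the continuity of $t\mapsto|u_t|$ together with monotone/dominated convergence, gives
\begin{equation*}
\E|u_t|^2+2\nu\int_0^t\E|u_s|^2\,\d s\le|x|^2+B_0t.
\end{equation*}

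Finally, setting $\varphi(t):=\E|u_t|^2$, this integral inequality and Gronwall's lemma yield $\varphi(t)\le\e^{-2\nu t}|x|^2+\tfrac{B_0}{2\nu}(1-\e^{-2\nu t})\le\e^{-2\nu t}|x|^2+\tfrac{B_0}{2\nu}$, which is (\ref{Equation HNS20}). I expect the main obstacle to be the first step: establishing the energy identity rigorously --- both the transport/pressure cancellations, which hinge on the lateral and horizontal boundary conditions and on the vertical-average constraint defining $H$, and the justification of the It\^o formula for the $H$-norm when the solution a priori only lives in $V$; once the identity is in hand, the remainder is identical to the $2$D Navier--Stokes case.
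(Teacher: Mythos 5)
Your proposal is correct and matches the paper's approach exactly: the paper gives no separate proof of this lemma, stating only that one applies the It\^o formula and repeats the argument of Proposition \ref{NS Prop EE}, which is precisely what you do, with the hydrostatic-specific cancellations (nonlinear terms via the divergence-free condition and boundary data, pressure via $\partial_{z_2}p=0$ and the vertical-average constraint) filled in where the paper leaves them implicit. The stopping-time, optional-stopping, Poincar\'e and Gronwall steps then go through verbatim as in the Navier--Stokes case.
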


	Next we prove the $d$-eventual continuity.
	\paragraph{Eventual continuity}
	Now we construct the generalized coupling for (\ref{Equation HNS}), which is the same as in  \cite[Section 6.2.2]{KS2018} and \cite[Section 4.4]{KS2020}, and is slightly modified from the construction in \cite[Section 3.2.4]{NJG2017}.	Fix $x,y\in V$,  let $\tilde{u}$ be the	solution to a similar system with the first equation changed to
	
	\begin{equation*}
	d\tilde{u}+(\tilde{u}\partial_{z_1}\tilde{u}+\tilde{w}\partial_{z_2}\tilde{u}-\nu\Delta \tilde{u}+\partial_{z_1}\tilde{p}+\frac{\nu\lambda_N}{2}P_N(u-\tilde{u}))dt=\sum_{k=1}^{m}\sigma_k(\tilde{u})dW^k.
	\end{equation*}

	For the difference $v:=u-\tilde{u}$, by similar estimates as in \cite[Section 3.2.4]{NJG2017} and \cite[Section 4.4]{KS2020},  one has
	\begin{equation}\label{Equation HNS3}
		\mathbb{E}[|v(t)|^2\exp((\nu\lambda_N-L) t-C_1\int_{0}^{t}(\|u\|^2+\|\partial_{z_2}u\|^2)ds)]\leq |x-y|^2,
	\end{equation}
	with a constant $C_1$ depending only on $\nu$ and $D$. Indeed, we have the following lemma.
		
	\begin{lemma}\label{HNS Lemma1}
		Assume $\mathbf{HNS_1}$-$\mathbf{HNS_3}$.
		Then\\
		$(1)$ $\;v(t)$ converges to $0$ almost surely as $t\rightarrow\infty;$\\
		$(2)$ 
		\begin{equation*}
		d_{TV}(P_t(y,\cdot),{\rm Law}(\tilde{u}(t)))\leq c_1|x-y|^{c_2}\exp(c_3(|x|^2+|\partial_{z_2}x|^2)),\; t\geq 0.
		\end{equation*}
		where $c_1,c_2,c_3$ are positive constants, independent of $t$.
		
	\end{lemma}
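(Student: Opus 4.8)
The plan is to mirror the structure of the proof of Lemma~\ref{NS Lemma1}, adapting the estimates to the hydrostatic setting where the natural energy norm is the stronger $V$-norm. The starting point is the differential inequality for $|v(t)|^2$ obtained from applying It\^o's lemma to the equation for $v=u-\tilde u$. As in the Navier--Stokes case, the control term $\frac{\nu\lambda_N}{2}P_N(u-\tilde u)$ cancels the low-mode contribution, while the high-mode part of the nonlinearity is absorbed using the bound $|\langle B(\mathbf{x},\mathbf{y}),\mathbf{z}\rangle|\le C|\mathbf{x}(0)|\,\|\mathbf{y}\|\,|\mathbf{z}|$ together with the Sobolev embedding $\mathcal{X}\hookrightarrow C^1$; the difference from Subsection~\ref{Section NS} is that here the relevant quantities are $\|u\|$ and $\|\partial_{z_2}u\|$, which is why \eqref{Equation HNS3} carries the integral of $\|u\|^2+\|\partial_{z_2}u\|^2$ rather than just $\|u\|^2$. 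Using the Poincar\'e inequality $|v|^2\le\lambda_1^{-1}\|v\|^2$ and Assumption $\mathbf{HNS_1}$ to control the Lipschitz term $(K^2\|Q\|-\lambda)$-type contribution from the noise difference, and $\mathbf{HNS_3}$ to guarantee the exponential rate $\nu\lambda_N-L>0$, an integrating-factor argument yields precisely \eqref{Equation HNS3}.

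For part $(1)$, I would argue exactly as in the Navier--Stokes case: since $\nu\lambda_N-L>0$, the exponential $\exp((\nu\lambda_N-L)t-C_1\int_0^t(\|u\|^2+\|\partial_{z_2}u\|^2)\,ds)$ does not decay too fast in expectation because the time-averaged enstrophy $\frac{1}{t}\int_0^t(\|u\|^2+\|\partial_{z_2}u\|^2)\,ds$ is controlled by the Lyapunov/energy structure of the equation, so a Borel--Cantelli or supermartingale argument on the process $|v(t)|^2$ multiplied by this exponential forces $v(t)\to 0$ almost surely along the full sequence of times. More precisely, \eqref{Equation HNS3} shows that $|v(t)|^2$ times a positive, almost surely unbounded factor has bounded expectation, which after a stopping-time decomposition gives $\liminf_t|v(t)|=0$ a.s., and a separate monotonicity/continuity estimate on $t\mapsto|v(t)|$ upgrades this to $\lim_t|v(t)|=0$ a.s.

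For part $(2)$, the total variation bound, I would set up the generalized coupling as a change of measure: $\tilde u$ solves the original equation with an added drift $\frac{\nu\lambda_N}{2}P_N(u-\tilde u)$ lying in the range of $\sigma$ by $\mathbf{HNS_2}$, so by Girsanov's theorem the law of $\tilde u$ on path space is absolutely continuous with respect to a shifted Wiener measure, with Radon--Nikodym density governed by $\exp(\int_0^\infty\|\sigma^{-1}(\tilde u_s)\frac{\nu\lambda_N}{2}P_N v_s\|^2\,ds)$. Using the uniform bound $C_0$ on the pseudo-inverse from $\mathbf{HNS_2}$, the Poincar\'e inequality to replace $|P_Nv_s|$ by $|v_s|$, and \eqref{Equation HNS3} to bound $\mathbb{E}\int_0^\infty|v_s|^2\,ds$ by something like $|x-y|^2$ times a factor $\exp(c(|x|^2+|\partial_{z_2}x|^2))$ coming from the energy estimate on the exponential weight, one obtains finiteness of the density and a Pinsker-type inequality converting the relative entropy bound into the stated $d_{TV}$ estimate with the $|x-y|^{c_2}$ power.

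The main obstacle I anticipate is controlling the exponential weight $\exp((\nu\lambda_N-L)t-C_1\int_0^t(\|u\|^2+\|\partial_{z_2}u\|^2)\,ds)$ and, relatedly, bounding $\mathbb{E}\int_0^\infty|v_s|^2\,ds$: unlike the additive-noise Lagrangian case, here we must simultaneously track $\|u\|$ and $\|\partial_{z_2}u\|$, and the hydrostatic nonlinearity makes the higher-order enstrophy estimates delicate. The key is that Lemma~\ref{HNS Lemma EE} (and its $V$-norm analogue via $\mathbf{HNS_1}$, which bounds $\|\partial_{z_2}\sigma(u)\|$) supplies exactly the time-averaged bound on $\|u\|^2+\|\partial_{z_2}u\|^2$ needed; once that is in hand the argument proceeds verbatim as in the proof of Lemma~\ref{NS Lemma1}, which is why the statement and its proof can be organized in parallel with Subsection~\ref{Section NS}.
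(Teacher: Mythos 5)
Your overall architecture is the same as the paper's: derive the exponentially weighted contraction estimate \eqref{Equation HNS3}, deduce almost sure convergence of $v$ from divergence of the weight, and control the total variation distance via Girsanov applied to the shift $\beta_t=\frac{\nu\lambda_N}{2}\sigma^{-1}(\tilde u_t)P_Nv_t$. However, two steps in your plan would not go through as written. First, for part (1) the input you need is a \emph{pathwise} bound $\limsup_{t\to\infty}t^{-1}\int_0^t(\|u\|^2+\|\partial_{z_2}u\|^2)\,ds\le C$ almost surely, and you attribute this to Lemma \ref{HNS Lemma EE}; but that lemma only bounds $\mathbb{E}|u|^2$ in the weaker $H$-norm and gives neither the $V$-norm time average nor an almost sure statement. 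The paper obtains the pathwise bound from the It\^o formula for $|u|^2+|\partial_{z_2}u|^2$ combined with the exponential martingale estimate of Lemma \ref{Lemma M} (the quantity $\Xi_\kappa$ with tail $\mathbb{P}(\Xi_\kappa\ge R)\le e^{-2\kappa R}$), citing \cite[formula (6.18)]{KS2018}. Relatedly, your ``more precise'' fallback --- bounded expectation of $|v(t)|^2$ times an unbounded factor gives $\liminf_t|v(t)|=0$, then upgrade by monotonicity --- is not a valid argument ($|v(t)|$ is not monotone and bounded expectation alone does not yield the liminf statement); the correct route, which you mention only in passing, is that $|v(t)|^2e^{h(t)}$ is a nonnegative local supermartingale, hence converges a.s., and the divergence of $e^{h(t)}$ forces $v(t)\to 0$.

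Second, for part (2) your plan to bound $\mathbb{E}\int_0^\infty|v_s|^2\,ds$ directly and then apply Pinsker's inequality glosses over the central technical obstacle. Estimate \eqref{Equation HNS3} controls $|v|^2$ only \emph{against} the exponential weight $\exp(h_p(t))$; stripping that weight off requires the H\"older splitting
\begin{equation*}
\mathbb{E}\Bigl(\int_0^t|v_s|^2\,ds\Bigr)^{\delta}\le\Bigl(\int_0^t\mathbb{E}|v_s|^{2p}e^{p(h_p-\epsilon s)}\,ds\Bigr)^{\delta/p}\Bigl(\mathbb{E}\Bigl(\int_0^te^{q(-h_p+\epsilon s)}\,ds\Bigr)^{\frac{(p-1)\delta}{p-\delta}}\Bigr)^{\frac{p-\delta}{p}},
\end{equation*}
where the second factor is finite only because $\delta$ is chosen small enough that the multiplier of $\Xi_\kappa$ in the exponent does not exceed $\kappa$. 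This is exactly why the paper invokes the fractional-moment total variation bound $d_{TV}\le C_\delta(\mathbb{E}(\int_0^t|\beta_s|^2ds)^{\delta})^{1/(1+\delta)}$ from \cite{BKS2020} rather than Pinsker: Pinsker corresponds to $\delta=1$, for which the required exponential moment of the reciprocal weight need not be finite, and it would in any case produce the exponent $2$ rather than the exponent $c_2=\frac{2\delta}{1+\delta}<2$ appearing in the statement. Your proposal names the right ingredients but omits the $p$-th moment estimate of $v$ and the choice of $\delta$, which are where the factor $\exp(c_3(|x|^2+|\partial_{z_2}x|^2))$ and the power $|x-y|^{c_2}$ actually come from.
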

	
	\begin{proposition}\label{HNS Prop2}
		Assume $\mathbf{HNS_1}$ - $\mathbf{HNS_3}$.
		Then the Markov semigroup $\{P_t\}_{t\geq 0}$ is eventually continuous on $V$. 
	\end{proposition}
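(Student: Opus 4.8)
The plan is to mirror the argument of Proposition \ref{NS Prop3} exactly, replacing the $H$-metric $|\cdot|$ by the $d$-metric (which here coincides with $|\cdot|$, the $H$-norm on $V$), and invoking Lemma \ref{HNS Lemma1} in place of Lemma \ref{NS Lemma1}. Since the state space is $V$ but we work with the weaker distance $d=|\cdot|$, the relevant test functions are $f\in C_b(V)\cap L_b(V,d)$, i.e. functions that are bounded, $\|\cdot\|$-continuous, and Lipschitz with respect to the $H$-norm. For such $f$, I would first split, for fixed $x,y\in V$,
\begin{equation*}
|P_tf(x)-P_tf(y)|\leq\mathbb{E}\bigl[\bigl(\|f\|_{\mathrm{Lip}}\,|v(t)|\bigr)\wedge\bigl(2\|f\|_\infty\bigr)\bigr]+\|f\|_\infty\,d_{TV}\bigl(P_t(y,\cdot),\mathrm{Law}(\tilde u(t))\bigr),
\end{equation*}
using that $\tilde u$ with initial datum $y$ can be realized on the same probability space as $u^x$ via the generalized coupling \eqref{Equation HNS3}, and that the Lipschitz estimate is with respect to the $H$-norm $|v(t)|=|u^x(t)-\tilde u^y(t)|$.

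Next I would pass to the limit. By Lemma \ref{HNS Lemma1}(1), $|v(t)|\to 0$ almost surely as $t\to\infty$, so the integrand in the first term tends to $0$ almost surely and is dominated by the constant $2\|f\|_\infty$; the dominated convergence theorem yields that the first term vanishes as $t\to\infty$, for every fixed pair $x,y$. For the second term, Lemma \ref{HNS Lemma1}(2) gives the bound $c_1|x-y|^{c_2}\exp\bigl(c_3(|x|^2+|\partial_{z_2}x|^2)\bigr)$, which is independent of $t$; hence
\begin{equation*}
\limsup_{t\to\infty}|P_tf(x)-P_tf(y)|\leq \|f\|_\infty\,c_1|x-y|^{c_2}\exp\bigl(c_3(|x|^2+|\partial_{z_2}x|^2)\bigr).
\end{equation*}
Now fixing $x\in V$ and letting $y\to x$ in $V$ (so in particular $|x-y|\to 0$ while $|x|^2+|\partial_{z_2}x|^2$ stays fixed), the right-hand side tends to $0$, giving
\begin{equation*}
\limsup_{y\to x}\limsup_{t\to\infty}|P_tf(x)-P_tf(y)|=0,
\end{equation*}
which is precisely $d$-eventual continuity at $x$ in the sense of Definition \ref{Def d-eventual continuity}; since $x\in V$ was arbitrary, the semigroup is $d$-eventually continuous on $V$. (Strictly, the statement of the proposition says ``eventually continuous'', but in the context of Section \ref{Section HNS} this is understood in the $d=|\cdot|$ topology, as the surrounding discussion makes clear; I would phrase the conclusion accordingly.)

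The only genuine input is Lemma \ref{HNS Lemma1}, so the proof of the proposition itself is short and essentially bookkeeping; the real work sits in Lemma \ref{HNS Lemma1}. The main obstacle there — which I would flag but not carry out here — is establishing part (2), the total variation bound between $\mathrm{Law}(\tilde u(t))$ and $P_t(y,\cdot)$: one controls the Radon–Nikodym derivative of the shifted measure via a Girsanov change of variables, for which one needs the control term $\tfrac{\nu\lambda_N}{2}P_N(u-\tilde u)$ to lie in the range of $\sigma$ (Hypothesis $\mathbf{HNS_2}$) with the pseudo-inverse uniformly bounded, and one needs the Novikov-type integrability of $\int_0^\infty|P_N v(s)|^2\,ds$, which in turn comes from \eqref{Equation HNS3} together with the almost-sure exponential growth of $\exp((\nu\lambda_N-L)t-C_1\int_0^t(\|u\|^2+\|\partial_{z_2}u\|^2)\,ds)$ — and this last point is exactly where Hypothesis $\mathbf{HNS_3}$ ($\lambda_N$ large) and the $H^1$-energy estimates for $u$ enter. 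Part (1) then follows from \eqref{Equation HNS3} by the same argument as in Lemma \ref{NS Lemma1}: the exponential weight grows at least linearly in $t$ almost surely (using that $\tfrac1t\int_0^t(\|u\|^2+\|\partial_{z_2}u\|^2)\,ds$ is asymptotically bounded below the threshold by the dissipative energy balance), forcing $|v(t)|^2\to 0$ a.s.
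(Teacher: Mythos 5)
Your proposal is correct and follows essentially the same route as the paper: the paper proves this claim inside the proof of Theorem \ref{HNS Thm} by exactly the splitting you write, bounding $|P_t\varphi(x)-P_t\varphi(y)|$ by $\mathbb{E}[(\|\varphi\|_{Lip}|v(t)|)\wedge(2\|\varphi\|_\infty)]+\|\varphi\|_\infty d_{TV}(P_t(y,\cdot),\mathrm{Law}(\tilde u(t)))$ and then invoking Lemma \ref{HNS Lemma1} together with dominated convergence. Your remark that the conclusion is really $d$-eventual continuity for $d$ the $H$-norm (despite the proposition's wording) also matches the paper's own framing in Section \ref{Section HNS} and Theorem \ref{HNS Thm}.
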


	Finally, we show the uniform irreducibility. Again, we prove it under the uniform nondegenerate assumption.
	
	\begin{itemize}
		\item[$\mathbf{HNS_2'}$] The function $\sigma$ is uniformly nondegenerate, i.e., there exists a constant $C_0$ such that for all $w\in V$
		\begin{equation*}
		\sup\limits_{u\in V}\|\sigma^{-1}(u)(w)\|\leq C_0\|w\|.
		\end{equation*}
	\end{itemize}

	\paragraph{Uniform irreducibility} Still, we apply similar arguments as in \cite[Lemma 3.8]{HM2011} to show the uniform irreducibility. 
	\begin{proposition}\label{HNS Prop UI}
	Assume that $\mathbf{HNS_1}$ and $\mathbf{HNS_2'}$, then $\{P_t\}_{t\geq 0}$ is unformly irreducibile.
	\end{proposition}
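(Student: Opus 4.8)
The plan is to adapt, essentially verbatim, the argument used for the genuine $2$D Navier--Stokes system in Proposition~\ref{NS Prop UI} (which itself follows \cite[Lemma 3.8]{HM2011} and \cite[Lemma 2.4]{SS2002}): steer the solution near the origin with a deterministic control, then run a Girsanov/comparison argument in which the non-degeneracy hypothesis $\mathbf{HNS_2'}$ supplies a uniform lower bound on the quadratic variation of the driving martingale. Fix $R>0$, $\epsilon\in(0,1)$ and $x\in V$ with $|x|\le R$; note the target ball $B(0,\epsilon)$ and the Lyapunov sublevel set are read in the $H$-metric $d$, as required for Theorem~\ref{Thm 5}, and every solution of \eqref{Equation HNS} lives in $V\subset H$. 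First I would solve the \emph{deterministic} hydrostatic Navier--Stokes equation with initial datum $x+\tfrac{\epsilon}{2}e_1$, calling its solution $\mathbf{w}$; the $H$-energy inequality gives $|\mathbf{w}_t|^2\le e^{-2\nu t}(R+1)^2$, so one may pick $T=T(\epsilon,R)>0$ with $|\mathbf{w}_T|<\epsilon/2$, while an enstrophy estimate for the deterministic flow yields $\|\mathbf{w}_t\|\le K<\infty$ uniformly in $t\ge 0$.

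Next I would introduce $D(t):=|u_t-\mathbf{w}_t|^2-(\epsilon/2)^2$ (so $D(0)=0$), apply It\^o's formula using \eqref{Equation HNS}, and set $\tau:=\inf\{t\ge0:|D(t)|>(\epsilon/4)^2\}$. On $[0,\tau)$ one has $u_t$ bounded in $H$ and $|u_t-\mathbf{w}_t|$ comparable to $\epsilon$; splitting the hydrostatic convective term as in \cite[Section 3.2.4]{NJG2017} and using $\mathbf{HNS_1}$ together with the uniform enstrophy bound on $\mathbf{w}$, the drift of $D$ is dominated by a constant $c_1(\epsilon,R,K,B_0)\,dt$. For the martingale part $M_t=\int_0^t 2\langle u_s-\mathbf{w}_s,\sigma(u_s)\,dW_s\rangle$, the non-degeneracy hypothesis $\mathbf{HNS_2'}$ gives $\tfrac{d}{dt}\langle M\rangle_t=4\sum_{k}|\langle u_t-\mathbf{w}_t,\sigma_k(u_t)\rangle|^2\ge 4C_0|u_t-\mathbf{w}_t|^2\ge \tfrac34 C_0\epsilon^2>0$ on $[0,\tau)$, while trivially $\tfrac{d}{dt}\langle M\rangle_t\le 4B_0|u_t-\mathbf{w}_t|^2$ is bounded above. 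Introducing an independent Wiener process $\widetilde{W}$ and $Y(t):=D(t\wedge\tau)+(\widetilde{W}_t-\widetilde{W}_\tau)\mathbf{1}_{\{t\ge\tau\}}$, the process $Y$ is a semimartingale with $Y(0)=0$ whose drift is bounded and whose quadratic variation is bounded above and below by positive constants, so \cite[Lemma I.8.3]{B1998} provides $p=p(\epsilon,R)>0$ with $\mathbb{P}(\sup_{0\le t\le T}|Y(t)|\le(\epsilon/4)^2)\ge p$. On this event $\tau>T$ and hence $|u_T-\mathbf{w}_T|<\epsilon/2$, so $P_T(x,B(0,\epsilon))\ge \mathbb{P}(|u_T|<\epsilon)\ge p$ uniformly over $|x|\le R$, which is \eqref{eq irreducibility}.

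I expect the main obstacle to be the nonlinear estimate on $[0,\tau)$: unlike the genuine $2$D Navier--Stokes case, the hydrostatic convective term $u\,\partial_{z_1}u+w\,\partial_{z_2}u$ with $w=-\int_{-h}^{z_2}\partial_{z_1}u\,dz'$ is not controlled by a naive bound of the form $C_D|u-\mathbf{w}|\,\|u-\mathbf{w}\|\,\|\mathbf{w}\|$; one must reproduce the anisotropic splitting and Ladyzhenskaya-type inequalities used in \cite[Section 3.2.4]{NJG2017} (and in deriving \eqref{Equation HNS3}) to absorb the top-order part into $\nu\|u-\mathbf{w}\|^2$ and leave only lower-order terms controlled by $\|\mathbf{w}\|\le K$ and $|u-\mathbf{w}|\lesssim\epsilon$. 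A secondary point is the deterministic enstrophy bound $\|\mathbf{w}_t\|\le K$, which again relies on the $2$D-type a priori estimates available for the hydrostatic equation. Once these two ingredients are in place, the comparison step is identical to Proposition~\ref{NS Prop UI}.
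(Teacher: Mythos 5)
Your overall scheme (deterministic steering plus the comparison argument via \cite[Lemma I.8.3]{B1998}) matches the paper's, but you run the energy comparison at the wrong level, and the step you yourself flag as ``the main obstacle'' does not close in the way you claim. You set $D(t)=|u_t-\mathbf{w}_t|^2-(\epsilon/2)^2$ in the $H$-norm and assert that, after the anisotropic splitting of \cite[Section 3.2.4]{NJG2017}, the drift of $D$ on $[0,\tau)$ is bounded by a constant, the lower-order terms being ``controlled by $\|\mathbf{w}\|\le K$ and $|u-\mathbf{w}|\lesssim\epsilon$''. For the hydrostatic nonlinearity this is false: the term $\int w_v\,\partial_{z_2}\mathbf{w}\,v$ with $w_v=-\int_{-h}^{z_2}\partial_{z_1}v\,dz'$ loses a horizontal derivative, and the $L^2$-level difference estimate only closes with a Gronwall weight involving $\int_0^t(\|\cdot\|^2+\|\partial_{z_2}\cdot\|^2)\,ds$ of a reference solution --- this is precisely why the paper's own coupling bound \eqref{Equation HNS3} carries $\exp(-C_1\int_0^t(\|u\|^2+\|\partial_{z_2}u\|^2)ds)$ instead of the genuine Navier--Stokes weight $\exp(-\tfrac{C_D^2}{\nu}\int_0^t\|u\|^2ds)$. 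The quantity $\|\partial_{z_2}\mathbf{w}_t\|$ is second-order information not furnished by the enstrophy bound $\|\mathbf{w}_t\|\le K$, and on $[0,\tau)$ you control only $|u-\mathbf{w}|\sim\epsilon$, not $\|u-\mathbf{w}\|$; so the drift of your $D$ is not bounded by a constant and the hypotheses of \cite[Lemma I.8.3]{B1998} are not met.

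The paper sidesteps this by running the comparison in $V$: it takes $D(t)=\|u_t-\mathbf{w}_t\|^2-(\epsilon/2)^2$, for which the $H^1$-level difference estimate is self-contained --- the nonlinearity contributes only $C(\|u-\mathbf{w}\|^4+\|u-\mathbf{w}\|^3\|u-\mathbf{w}\|_{H^2})$, the top-order part is absorbed into the dissipation $-\nu\|u-\mathbf{w}\|_{H^2}^2$ after taking $\epsilon$ small, and no higher norms of $u$ or $\mathbf{w}$ separately are needed. The martingale part and the nondegeneracy are then read in the $H^1$ pairing, consistent with the fact that $\mathbf{HNS_1}$ and $\mathbf{HNS_2'}$ are stated in terms of $\|\cdot\|$ rather than $|\cdot|$; your lower bound $\tfrac{d}{dt}\langle M\rangle_t\ge 4C_0|u_t-\mathbf{w}_t|^2$ invokes an $H$-nondegeneracy that the stated hypotheses do not directly supply. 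The resulting conclusion $\mathbb{P}(\|u_T\|<\epsilon)\ge p$ bounds the probability of hitting the $V$-ball, which (by the Poincar\'e inequality) implies the $H$-ball bound needed for Theorem \ref{Thm 5}. To repair your argument you should either move the whole comparison to the $V$-norm as the paper does, or supply a genuine proof that the $H$-level drift is uniformly bounded, which the cited estimates do not give.
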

	
	Collecting these results, we can prove the unique ergodicity by Theorem \ref{Thm 5}.

	\begin{theorem} \label{HNS Thm}
		Assume $\mathbf{HNS_1}$ and $\mathbf{HNS_2'}$.
		Then the Markov semigroup $\{P_t\}_{t\geq 0}$ is $d$-eventually continuous on $V$. Moreover, $\{P_t\}_{t\geq 0}$  has unique invariant meausre $\mu$ on $V$, such that $P_t(x,\cdot)$ converges weakly to $\mu$ in the $L^2$-topology as $t\rightarrow\infty$.
	\end{theorem}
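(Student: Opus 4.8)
The plan is to verify the three hypotheses of Theorem \ref{Thm 5} for $\{P_t\}_{t\ge0}$, taking $\mathcal{X}=V$ with its own norm $\rho=\|\cdot\|$ and the auxiliary distance $d(x,y)=|x-y|$, the $H$‑norm: $d$ is continuous with respect to $\rho$ through the embedding $V\hookrightarrow H$, it is not $\|\cdot\|$‑complete, and $\overline{V}^{\,d}=H$ since $V$ is dense in $H$. A preliminary remark: under $\mathbf{HNS_1}$ and $\mathbf{HNS_2'}$ one may fix $N$ as large as needed so that $\mathbf{HNS_2}$ and $\mathbf{HNS_3}$ hold as well — full nondegeneracy gives $P_NH\subset\mathrm{Range}(\sigma(u))$ with uniformly bounded inverse for \emph{every} $N$, and then $\lambda_N>C(B_0+L+1)$ for $N$ large because $\lambda_N\to\infty$. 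Hence Lemma \ref{HNS Lemma EE}, Lemma \ref{HNS Lemma1}, Proposition \ref{HNS Prop2} and Proposition \ref{HNS Prop UI} are all available, and it remains only to assemble: the Feller property, $d$‑eventual continuity on $V$, and the $d$‑lower bound condition at one point.

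For the first two ingredients I would argue as follows. The well‑posedness theory for \eqref{Equation HNS} yields a Markov–Feller semigroup on $V$, and that is the only role the Feller property plays in Theorem \ref{Thm 5} (existence of an invariant measure, via the tightness argument reproduced in its proof). For $d$‑eventual continuity I would invoke Proposition \ref{HNS Prop2}: its proof follows Proposition \ref{NS Prop3} and rests on Lemma \ref{HNS Lemma1}. The generalized coupling $\tilde u$ of \eqref{Equation HNS} with the nudging term $\tfrac{\nu\lambda_N}{2}P_N(u-\tilde u)$ gives the weighted estimate \eqref{Equation HNS3}, from which one extracts that $v(t)=u(t)-\tilde u(t)\to0$ almost surely in $H$ — this is where $\mathbf{HNS_3}$ is used, the spectral gap forcing $\exp\big((\nu\lambda_N-L)t-C_1\int_0^t(\|u\|^2+\|\partial_{z_2}u\|^2)\,ds\big)\to\infty$ a.s. since the enstrophy‑type time averages are dominated by $(\nu\lambda_N-L)t$ — while a Girsanov computation, admissible because $\mathbf{HNS_2'}$ puts the control term in the range of $\sigma$ with bounded inverse and $\mathbf{HNS_1}$ bounds $\sigma$, gives the total‑variation bound in Lemma \ref{HNS Lemma1}(2). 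Then for any $f\in C_b(V)\cap L_b(V,d)$, with $\|f\|_{{\rm Lip},d}$ its $d$‑Lipschitz seminorm,
\[
|P_tf(x)-P_tf(y)|\le \E\!\big[(\|f\|_{{\rm Lip},d}\,|v(t)|)\wedge 2\|f\|_\infty\big]+\|f\|_\infty\, d_{TV}\big(P_t(y,\cdot),{\rm Law}(\tilde u(t))\big),
\]
and sending $t\to\infty$ (dominated convergence on the first term, using $v(t)\to0$ a.s.) and then $y\to x$ in the $\|\cdot\|$‑norm (which drives $|x-y|^{c_2}\to0$ while $|x|,|\partial_{z_2}x|$ stay finite and fixed) gives $\lim_{y\to x}\limsup_{t\to\infty}|P_tf(x)-P_tf(y)|=0$ at every $x\in V$, i.e. $d$‑eventual continuity on $V$.

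For the $d$‑lower bound condition I would use the evident $d$‑analogue of Proposition \ref{prop lbc}: its proof is purely measure‑theoretic (decompose $P_{t+T}(x,\cdot)$, restrict to a sublevel set, apply uniform irreducibility), so it carries over verbatim with $B(z,\epsilon)$ replaced by $B_d(z,\epsilon)$. Take the Lyapunov function $\Phi(x):=|x|^2$, a measurable map $V\to\R_+$; Lemma \ref{HNS Lemma EE} reads $P_t\Phi(x)\le e^{-2\nu t}\Phi(x)+\tfrac{B_0}{2\nu}$, which is exactly \eqref{eq lyapunov} with $h(t)=e^{-2\nu t}\to0$ and $C=\tfrac{B_0}{2\nu}$, and the sublevel sets $\{\Phi\le R\}=\{x\in V:|x|^2\le R\}$ are $H$‑balls. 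Uniform irreducibility in $d$ is Proposition \ref{HNS Prop UI}: it supplies a point $z\in V$ such that for all $\epsilon,R>0$ there is $T=T(\epsilon,R)$ with $\inf_{\{\Phi\le R\}}P_T(x,B_d(z,\epsilon))>0$; the underlying control argument — steer the deterministic flow into a small $H$‑ball about $z$ in time $T$, bound its enstrophy for positive times by parabolic smoothing, then lower‑bound the probability that the stochastic flow tracks it using $\mathbf{HNS_2'}$ and a stopping‑time estimate in the spirit of \cite[Lemma 3.8]{HM2011} — uses only the $H$‑bound $|x|^2\le R$ on the initial datum, which is what the $H$‑norm Lyapunov function requires. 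Feeding this Lyapunov structure and this uniform irreducibility into the $d$‑analogue of Proposition \ref{prop lbc} then yields, for the same $z$, $\inf_{x\in V}\liminf_{t\to\infty}P_t(x,B_d(z,\epsilon))>0$ for every $\epsilon>0$.

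With the Feller property, $d$‑eventual continuity on $V$, and the $d$‑lower bound condition at $z$ all in place, Theorem \ref{Thm 5} applies and delivers a unique invariant probability measure $\mu$ (necessarily supported in $V$) together with $P_t(x,\cdot)\to\mu$ weakly in the $d$‑topology, i.e. the $L^2$‑topology, for every $x\in V$ — which is the assertion. I expect the main obstacle to be the $d$‑eventual continuity step, that is Lemma \ref{HNS Lemma1}: turning the weighted $L^2$‑estimate \eqref{Equation HNS3} into almost‑sure decay of $v(t)$ needs sharp enstrophy‑type a priori bounds on $u$ (and this is precisely where the spectral gap $\mathbf{HNS_3}$ is indispensable), and the companion total‑variation estimate needs enough integrability of the Girsanov density, resting on $\mathbf{HNS_1}$ and $\mathbf{HNS_2'}$. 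A secondary subtlety worth flagging is that the coupling controls $v(t)$ only in the $H$‑norm, so one obtains continuity against $d$‑Lipschitz test functions but not $\|\cdot\|$‑Lipschitz ones — which is exactly why the argument must be channelled through Theorem \ref{Thm 5} rather than Theorem \ref{Thm 4}, and why the weak energy estimate Lemma \ref{HNS Lemma EE}, in place of an unavailable $V$‑norm Lyapunov bound, is the right input.
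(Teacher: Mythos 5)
Your proposal follows essentially the same route as the paper: the $d$-eventual continuity is obtained from the generalized-coupling estimate of Lemma \ref{HNS Lemma1} via the splitting $|P_t\varphi(x)-P_t\varphi(y)|\le \E[(\|\varphi\|_{Lip}|v(t)|)\wedge 2\|\varphi\|_\infty]+\|\varphi\|_\infty d_{TV}(P_t(y,\cdot),\mathrm{Law}(\tilde u(t)))$ with dominated convergence, and the lower bound condition is fed into Theorem \ref{Thm 5} through Proposition \ref{prop lbc} combined with Lemma \ref{HNS Lemma EE} and Proposition \ref{HNS Prop UI}. Your additional remarks — that $\mathbf{HNS_2'}$ subsumes $\mathbf{HNS_2}$ and permits choosing $N$ large enough for $\mathbf{HNS_3}$, that Proposition \ref{prop lbc} must be read with $d$-balls, and that the Lyapunov function is $|x|^2$ with $h(t)=e^{-2\nu t}$ — are correct elaborations of steps the paper leaves implicit.
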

	
	\begin{proof}[Proof.] 

	We first verify the   the $d$-eventual continuity. Fix any $\varphi\in C_b(V)\cap L_b(V,d),$
		\begin{equation*}
		\begin{aligned}
		|P_t\varphi(x)-P_t\varphi(y)|&=|\mathbb{E}\varphi(u(t))-\mathbb{E}\varphi(\tilde{u}(t))|\\
		&\leq\mathbb{E}[(\|\varphi\|_{Lip}|v(t)|)\wedge (2\|\varphi\|_{\infty})]+\|\varphi\|_{\infty}d_{TV}(P_t(y,\cdot),\text{Law}(\tilde{u}(t))).
		\end{aligned}
		\end{equation*}
		Hence by Lemma \ref{HNS Lemma1} and the Dominated Convergence Theorem, it follows that
		\begin{equation*}
		\limsup\limits_{y\rightarrow x}\limsup\limits_{t\rightarrow \infty}|P_t\varphi(x)-P_t\varphi(y)|=0,
		\end{equation*}
		which implies the $d$-eventual continuity.\par 
		
		Next, by Proposition \ref{prop lbc}, the lower bound condition (\ref{eq 4.1}) follows from Lemma \ref{HNS Lemma EE} and Proposition \ref{HNS Prop UI},  completing the proof by Theorem $\ref{Thm 5}$.
		
	\end{proof}

	\subsection{The damped stochastically forced Euler--Voigt model}\label{Section EV}

	Our next example is the	2D damped stochastically forced Euler--Voigt model. Fix $\gamma>2/3$ and consider the following	equation on the periodic box  $D=[-\pi,\pi]^2$.	
	\begin{equation}\label{Equation EV}
	\left\{\begin{aligned}
	&d\mathbf{u}+(\nu\mathbf{u}+\mathbf{u}_\gamma\cdot\nabla\mathbf{u}_\gamma+\nabla p )dt=\sum_{k=1}^{m}\sigma_k(\mathbf{u}_t)dW^k_t,\\
	&\mathbf{u}(0)=x,\quad\nabla\cdot\mathbf{u}=0,\quad\Lambda^\gamma\mathbf{u}_\gamma=\mathbf{u},
	\end{aligned}\right.
	\end{equation}
	where 	 $\mathbf{u}=(u_1,u_2)$ is the unknown velocity field, $p$ is the unknown pressure, $\nu>0,$  $m\in\mathbb{N}$, $W=(W^1,\dots,W^m)$ is a standard $m$-dimensional Brownian motion.  Here $\Lambda_\gamma=(-\Delta)^{\gamma/2}$ is
	the fractional Laplacian with $\gamma>2/3$, and we use the notation $\|\psi\|_{H^s}:=\|\Lambda^s\|_{L^2},\,s\in\mathbb{R}$. \par 
	
	Introduce the following space:
	\begin{equation*}
	\begin{aligned}
	V&:=\{\psi\in H^{1-\gamma/2}(D)^2:\nabla\cdot\psi=0,\;\int_D\psi = 0\} .
	\end{aligned}
	\end{equation*}
	The eigenvectors $e_1,e_2,\dots,$ of the operator $\Delta|_V$ (with the corresponding eigenvalues $0<\lambda_1<\lambda_2<\dots$) form a complete orthonormal basis of $V$. \par

    We make the following Hypotheses: 
	\begin{itemize}
		\item[$\mathbf{EV_1.}$] 	Assume that for $\mathbf{u}\in V$, $k=1,\dots,m$, $\sigma_k(\mathbf{u})\in H^{1-\gamma/2}(D)^2$, $\nabla\cdot\sigma_k(\mathbf{u})=0$, $\int_D\sigma_k\mathbf{u}=0.$ For $\rho(\mathbf{u}):=(\nabla^{\bot}\cdot\mathbf{\sigma})(\mathbf{u}),$
		assume that there exist constants $B_0,L>$ such that
		\begin{equation*}
		\begin{aligned}
		&\|\rho(\mathbf{u})\|^2_{H^{1-\gamma/2}}\leq B_0,\text{ for all }\mathbf{u}\in H;\\
		&|\sigma(\mathbf{u})-\sigma(\mathbf{v})|^2=\sum_{k=1}^{m}|\sigma_k(\mathbf{u})-\sigma_k(\mathbf{v})|^2\leq L|\mathbf{u}-\mathbf{v}|^2,\text{ for all }\mathbf{u},\mathbf{v}\in H.
		\end{aligned}
		\end{equation*}
		\item[$\mathbf{EV_2.}$] There exists $N\in\mathbb{N}$ such that
		\begin{equation*}
		P_NH\subset \text{Range}\;(\sigma_k(\mathbf{u})),\text{ for all }\mathbf{u}\in H,\;k=1,\dots,m.
		\end{equation*}\par 
		Moreover, the corresponding pseudo-inverse operators $\sigma_k^{-1}:P_NH\rightarrow H$ are uniformly bounded, i.e., there exists a constant $C_0$ such that
		\begin{equation*}
		|\sigma(\mathbf{u})^{-1}(P_N\mathbf{w})|\leq C_0|P_N\mathbf{w}|,\text{ for all }\mathbf{u},\mathbf{w}\in H.
		\end{equation*}\par 
		\item[$\mathbf{EV_3.}$] 	Finally, we assume that $N$ is sufficiently large such that
		\begin{equation}\label{Equation EV5}
    	\lambda_N^{\gamma/2-1/3} >C\nu ^{-3}\|\rho \|^2_{H^{-\gamma/2}},
		\end{equation}
		for some constant $C>0$.
	\end{itemize}

    The existence of a unique strong solution to equation (\ref{Equation EV}) can be argued by the same way as in \cite[Section 3.4]{NJG2017} under the above assumptions on $\sigma$. Moreover, $\mathbf{u}$ is a  Markov process with the state space $V$.
	However, the Feller property of $\mathbf{u}$ is obtained only with respect to a weaker $H^{-\gamma/2}$-norm rather than $H^{1-\gamma/2}$-norm. Likewise,  we  endow $V$ with $H^{1-\gamma/2}$-norm, and denote
	\begin{equation*}
		d(x,y):=\|x-y\|_{H^{-\gamma/2}},\quad\text{ for }x,y\in V,
	\end{equation*}
	then the Markov semigroup $\{P_t\}_{t\geq 0}$ generated by $\mathbf{u}$ is $d$-Feller. Therefore, we can derive the asymptotic stability via Theorem \ref{Thm 5}.

	\paragraph{Energy estimates} Applying the Ito lemma, applying the same argument as in Proposition\ref{NS Prop EE}, it follows that
	\begin{lemma}\label{EV Lemma EE} Assume $\mathbf{HNS_1}$. Then
		\begin{equation}\label{Equation EV20}
	    \mathbb{E}\|\mathbf{u}\|_{H^{-\gamma/2}}^2\leq e^{-2\nu t}|x|^2+\frac{B_0}{2\nu}.
		\end{equation}
	\end{lemma}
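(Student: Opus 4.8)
The plan is to mimic the proof of Proposition \ref{NS Prop EE}, but working in the $H^{-\gamma/2}$-scalar product rather than in $L^2$. Since $\Lambda^\gamma\mathbf{u}_\gamma=\mathbf{u}$ we have $\mathbf{u}_\gamma=\Lambda^{-\gamma}\mathbf{u}$, and the $H^{-\gamma/2}$-inner product may be written as $\langle f,g\rangle_{H^{-\gamma/2}}=\langle\Lambda^{-\gamma}f,g\rangle_{L^2}$. Applying $\Lambda^{-\gamma/2}$ to \eqref{Equation EV} gives an equation for $\mathbf{w}:=\Lambda^{-\gamma/2}\mathbf{u}$ in $L^2(D)^2$, to which the ordinary It\^o formula for $\|\mathbf{w}\|_{L^2}^2=\|\mathbf{u}\|_{H^{-\gamma/2}}^2$ applies (recall that the solution lives in the more regular space $V=H^{1-\gamma/2}$, so all pairings below are classical). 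As in Proposition \ref{NS Prop EE} I would first introduce the stopping times $\tau_K:=\inf\{t\geq 0:\|\mathbf{u}_t\|_{H^{-\gamma/2}}\geq K\}$, carry out the computation up to $t\wedge\tau_K$ so that the stochastic integral is a genuine martingale with zero expectation by the optional stopping theorem, and then let $K\to\infty$ using the time-continuity of $t\mapsto\mathbf{u}_t$ together with Fatou's lemma.

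The term-by-term identification is the heart of the matter. The damping term contributes $-2\nu\|\mathbf{u}\|_{H^{-\gamma/2}}^2$. The pressure gradient drops out, since $\langle\nabla p,\mathbf{u}\rangle_{H^{-\gamma/2}}=\langle\nabla p,\mathbf{u}_\gamma\rangle_{L^2}=-\langle p,\nabla\cdot\mathbf{u}_\gamma\rangle_{L^2}=0$ because $\nabla\cdot\mathbf{u}_\gamma=\Lambda^{-\gamma}(\nabla\cdot\mathbf{u})=0$. The nonlinear term also vanishes: $\langle\mathbf{u}_\gamma\cdot\nabla\mathbf{u}_\gamma,\mathbf{u}\rangle_{H^{-\gamma/2}}=\langle\mathbf{u}_\gamma\cdot\nabla\mathbf{u}_\gamma,\mathbf{u}_\gamma\rangle_{L^2}=\tfrac12\int_D\mathbf{u}_\gamma\cdot\nabla|\mathbf{u}_\gamma|^2=-\tfrac12\int_D(\nabla\cdot\mathbf{u}_\gamma)|\mathbf{u}_\gamma|^2=0$, which is precisely why the $H^{-\gamma/2}$-norm, rather than the $V$-norm, is the natural quantity here. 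The It\^o correction term equals $\sum_{k}\|\sigma_k(\mathbf{u})\|_{H^{-\gamma/2}}^2$, and using the Fourier-side identity $\|\sigma_k(\mathbf{u})\|_{H^{s}}=\|\rho_k(\mathbf{u})\|_{H^{s-1}}$ valid for divergence-free mean-zero fields (where $\rho=\nabla^{\perp}\!\cdot\sigma$), together with Sobolev-norm monotonicity since $-\gamma/2-1\leq 1-\gamma/2$, one gets $\|\sigma(\mathbf{u})\|_{H^{-\gamma/2}}^2=\|\rho(\mathbf{u})\|_{H^{-\gamma/2-1}}^2\leq\|\rho(\mathbf{u})\|_{H^{1-\gamma/2}}^2\leq B_0$ by $\mathbf{EV_1}$.

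Collecting these, one obtains $d\|\mathbf{u}_t\|_{H^{-\gamma/2}}^2+2\nu\|\mathbf{u}_t\|_{H^{-\gamma/2}}^2\,dt\leq B_0\,dt+dM_t$ with $M$ a martingale after localization; taking expectations gives $\tfrac{d}{dt}\mathbb{E}\|\mathbf{u}_t\|_{H^{-\gamma/2}}^2\leq-2\nu\,\mathbb{E}\|\mathbf{u}_t\|_{H^{-\gamma/2}}^2+B_0$, and Gr\"onwall's inequality yields $\mathbb{E}\|\mathbf{u}_t\|_{H^{-\gamma/2}}^2\leq e^{-2\nu t}\|x\|_{H^{-\gamma/2}}^2+\tfrac{B_0}{2\nu}(1-e^{-2\nu t})$, which is \eqref{Equation EV20} (the right-hand factor ``$|x|^2$'' being read as $\|x\|_{H^{-\gamma/2}}^2$). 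The only genuinely delicate point is the rigorous justification of the It\^o formula for $\|\mathbf{u}\|_{H^{-\gamma/2}}^2=\|\Lambda^{-\gamma/2}\mathbf{u}\|_{L^2}^2$ and the passage to the limit $K\to\infty$; everything else is the same dissipative bookkeeping as in Proposition \ref{NS Prop EE}.
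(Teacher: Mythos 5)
Your proposal is correct and follows exactly the route the paper indicates: the paper gives no detailed proof for this lemma, saying only ``applying the It\^o lemma, applying the same argument as in Proposition~\ref{NS Prop EE},'' and your argument is precisely that computation carried out in the $H^{-\gamma/2}$ pairing, with the two points the paper leaves implicit (the cancellation $\langle\mathbf{u}_\gamma\cdot\nabla\mathbf{u}_\gamma,\mathbf{u}\rangle_{H^{-\gamma/2}}=\langle\mathbf{u}_\gamma\cdot\nabla\mathbf{u}_\gamma,\mathbf{u}_\gamma\rangle_{L^2}=0$ and the bound $\|\sigma(\mathbf{u})\|_{H^{-\gamma/2}}^2=\|\rho(\mathbf{u})\|_{H^{-\gamma/2-1}}^2\leq B_0$ via $\mathbf{EV_1}$) correctly supplied.
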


	Next we prove the $d$-eventual continuity.\par 
	
	\paragraph{$d$-Eventual continuity}

	We again use the same coupling construction as in \cite[Section 6.2.4]{KS2018} and \cite[Section 4.6]{KS2020}, which is a slightly modified from the construction in \cite[Section 3.4.4]{NJG2017}. Fix $x,y\in V,$ and and $\tilde{\mathbf{u}}$ be the solution to the following equation
	\begin{equation}\label{Equation EV0}
	\left\{\begin{aligned}
	&d\tilde{\mathbf{u}}+(\nu\tilde{\mathbf{u}}+\tilde{\mathbf{u}}_\gamma\cdot\nabla\tilde{\mathbf{u}}_\gamma+\nabla \tilde{p}-\frac{1}{2}\nu\lambda_N^{\gamma/2-1/3} P_N(\mathbf{u}-\tilde{\mathbf{u}}))dt=\sum_{k=1}^{m}\sigma_k(\tilde{\mathbf{u}})dW^k,\\
	&\tilde{\mathbf{u}}(0)=y,\quad\nabla\cdot\tilde{\mathbf{u}}=0,\quad\Lambda^\gamma\tilde{\mathbf{u}}_\gamma=\tilde{\mathbf{u}},
	\end{aligned}\right.
	\end{equation}
	where $P_N$ denotes the projector corresponding to the eigenfunctions of $\Delta|_V$.
	
	Let $\mathbf{v}:=\mathbf{u}-\tilde{\mathbf{u}}$, then by similar estimates as in \cite[Section 4.6]{KS2020},
	\begin{equation}\label{Equation EV1}
	\mathbb{E}[\|\mathbf{v}\|_{H^{-\gamma/2}}^2\exp((\nu-L) t-C_1 \nu^{-1}\lambda_N^{-\gamma/2+1/3}\int_{0}^{t}\|\xi\|_{H^{-\gamma/2}}^2ds)]\leq\|x-y\|_{H^{-\gamma/2}}^2,\quad t\geq 0,
	\end{equation}
	where $\xi=\nabla^{\bot}\cdot\mathbf{\mathbf{u}}$  and $C_1>0$ is a universal constant. Furthermore, we have the following lemma.
	
	\begin{lemma}\label{EV Lemma1}
		Assume that $\mathbf{EV_1}$-$\mathbf{EV_3}$. Then\\
		$(1)$ $\;\mathbf{v}(t)$ converges to $0$ almost surely as $t\rightarrow\infty;$\\
		$(2)$ 
		\begin{equation*}
		d_{TV}(P_t(y,\cdot),{\rm Law}(\tilde{\mathbf{u}}(t)))\leq c_1\|x-y\|_{H^{-\gamma/2}}^{c_2}\exp(c_3\|x\|_{H^{1-\gamma/2}}^2),\; t\geq 0.
		\end{equation*}
		where $c_1,c_2,c_3$ are positive constants, independent of $t$.
		
	\end{lemma}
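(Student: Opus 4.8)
The plan is to mimic, in the weaker $H^{-\gamma/2}$-topology, the two-part argument already used for the 2D Navier--Stokes case (Lemma \ref{NS Lemma1}): first deduce the almost sure decay of $\mathbf{v}(t)$ from the exponential moment bound \eqref{Equation EV1}, and then bound the total variation distance between $\mathrm{Law}(\tilde{\mathbf{u}}(t))$ and $P_t(y,\cdot)$ by a Girsanov argument on the control term. For part (1), I would argue that \eqref{Equation EV1} forces the random variable $\|\mathbf{v}(t)\|_{H^{-\gamma/2}}^2 \exp\!\big((\nu-L)t - C_1\nu^{-1}\lambda_N^{-\gamma/2+1/3}\int_0^t \|\xi\|_{H^{-\gamma/2}}^2\,ds\big)$ to be a nonnegative supermartingale (or at least a process with uniformly bounded expectation), so it converges a.s. to a finite limit; then one needs the exponent to tend to $+\infty$ almost surely. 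This is where Hypothesis $\mathbf{EV_3}$ enters: \eqref{Equation EV5} is designed precisely so that $(\nu-L)$ dominates $C_1\nu^{-1}\lambda_N^{-\gamma/2+1/3}$ times the time-averaged enstrophy $\frac1t\int_0^t\|\xi\|_{H^{-\gamma/2}}^2\,ds$. Combining the energy/enstrophy estimate (a variant of Lemma \ref{EV Lemma EE}, giving an a.s. bound on the Cesàro averages of $\|\xi\|_{H^{-\gamma/2}}^2$ via the ergodic-type bound $\limsup_t \frac1t\int_0^t\|\xi\|^2\,ds \le C$ a.s.) with the strict inequality in \eqref{Equation EV5} yields that the exponent grows at least linearly, hence $\|\mathbf{v}(t)\|_{H^{-\gamma/2}}\to 0$ a.s.

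For part (2), I would follow the generalized-coupling/Girsanov scheme of \cite{HM2011,KS2018}. The processes $\mathbf{u}$ and $\tilde{\mathbf{u}}$ are driven by the same Brownian motion; $\tilde{\mathbf{u}}$ solves \eqref{Equation EV0}, which differs from \eqref{Equation EV} only by the finite-rank control $\tfrac12\nu\lambda_N^{\gamma/2-1/3}P_N\mathbf{v}$. Using $\mathbf{HNS_2}$-type nondegeneracy on $P_NH$ (here $\mathbf{EV_2}$, with uniformly bounded pseudo-inverse $\sigma^{-1}$), this control can be absorbed into a Girsanov shift of $W$: define $\tilde{W}_t = W_t + \int_0^t \sigma(\tilde{\mathbf{u}}_s)^{-1}\big(\tfrac12\nu\lambda_N^{\gamma/2-1/3}P_N\mathbf{v}_s\big)\,ds$, so that $\tilde{\mathbf{u}}$ driven by $W$ has the same law under the shifted measure as $\mathbf{u}^y$ driven by $\tilde{W}$. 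Then $d_{TV}(P_t(y,\cdot),\mathrm{Law}(\tilde{\mathbf{u}}(t)))$ is controlled by the $L^2$-norm of the Radon--Nikodym density, which via Pinsker's inequality reduces to estimating $\mathbb{E}\int_0^\infty |\sigma^{-1}(\tilde{\mathbf{u}}_s)P_N\mathbf{v}_s|^2\,ds \le C_0^2 (\tfrac12\nu\lambda_N^{\gamma/2-1/3})^2 \mathbb{E}\int_0^\infty |P_N\mathbf{v}_s|^2\,ds$. This last integral is finite and bounded by $c\|x-y\|_{H^{-\gamma/2}}^2\exp(c_3\|x\|_{H^{1-\gamma/2}}^2)$: one uses \eqref{Equation EV1} together with a pathwise bound showing that the stopped exponential weight stays bounded below with controlled probability — the $\exp(c_3\|x\|_{H^{1-\gamma/2}}^2)$ factor arises exactly from a deterministic bound on $\int_0^t\|\xi\|_{H^{-\gamma/2}}^2\,ds$ in terms of the stronger norm of the initial datum (the enstrophy estimate starting from $x\in V$), so that one can exchange the exponential weight in \eqref{Equation EV1} for a constant at the cost of that factor.

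The main obstacle I expect is making the passage from the weighted bound \eqref{Equation EV1} to the \emph{unweighted} bound $\mathbb{E}\int_0^\infty|P_N\mathbf{v}_s|^2\,ds < \infty$ rigorous, i.e. controlling the random exponential multiplier $\exp\big(-(\nu-L)s + C_1\nu^{-1}\lambda_N^{-\gamma/2+1/3}\int_0^s\|\xi\|^2\,dr\big)$ uniformly. The clean way is to localize with the stopping time $\tau_R = \inf\{t: \int_0^t \|\xi_r\|_{H^{-\gamma/2}}^2\,dr \ge Rt + \text{(something)}\}$ or, following \cite{KS2018}, to use the enstrophy estimate from $x\in V$ to get a deterministic-in-probability bound on $\int_0^t\|\xi\|^2$, so that on a large-probability event the exponent is $\ge \tfrac12(\nu-L)s$ for $s$ large; off that event one pays the $\exp(c_3\|x\|_{H^{1-\gamma/2}}^2)$ penalty. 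Verifying that this bookkeeping genuinely gives summability in $s$ (hence a finite time integral), with constants $c_1,c_2,c_3$ independent of $t$, is the technical heart; everything else (the Girsanov computation, Pinsker, $\mathbf{EV_3}$ ensuring positivity of $\nu-L$ in the relevant regime) is routine given the analogous 2D Navier--Stokes argument. Once Lemma \ref{EV Lemma1} is in hand, the $d$-eventual continuity follows verbatim as in Proposition \ref{NS Prop3}, via $|P_t\varphi(x)-P_t\varphi(y)| \le \mathbb{E}[(\|\varphi\|_{Lip}\|\mathbf{v}(t)\|_{H^{-\gamma/2}})\wedge 2\|\varphi\|_\infty] + \|\varphi\|_\infty d_{TV}(P_t(y,\cdot),\mathrm{Law}(\tilde{\mathbf{u}}(t)))$ and dominated convergence.
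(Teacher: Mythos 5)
Your part (1) follows the paper's argument: the a.s.\ Ces\`aro bound $\limsup_{t}\frac{\nu}{t}\int_0^t\|\xi\|_{H^{-\gamma/2}}^2\,ds\le B_0$ (quoted from \cite{KS2018}) combined with \eqref{Equation EV5} makes the exponent in \eqref{Equation EV1} grow linearly, whence $\mathbf{v}(t)\to 0$ almost surely.

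For part (2) there is a genuine gap in your reduction. You pass from the Girsanov shift to the total variation bound via Pinsker's inequality, which requires the \emph{unweighted} second moment $\mathbb{E}\int_0^\infty|\beta_s|^2\,ds\lesssim\mathbb{E}\int_0^\infty\|P_N\mathbf{v}_s\|_{L^2}^2\,ds$ to be finite. That is precisely what the available estimates do not give. The only control on $\mathbf{v}$ is the weighted bound \eqref{Equation EV1}, and removing the random weight $\exp\bigl(C_1\nu^{-1}\lambda_N^{-\gamma/2+1/3}\int_0^s\|\xi\|_{H^{-\gamma/2}}^2\,dr\bigr)$ costs a factor of the form $\mathbb{E}\exp(\lambda\Xi_\kappa)$ (with $\Xi_\kappa$ as in Lemma \ref{Lemma M}), which is finite only for small $\lambda$; there is no deterministic pathwise bound of $\int_0^t\|\xi\|_{H^{-\gamma/2}}^2\,ds$ by $c(\|x\|_{H^{1-\gamma/2}}^2+t)$, since the energy identity leaves a martingale term and the bound is genuinely random. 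Your proposed fix (work on a large-probability event where the exponent grows, pay a penalty off it) does not close this: off the good event there is no bound at all on $\int_0^\infty\|\mathbf{v}_s\|^2\,ds$, so its expectation cannot be recovered by a multiplicative constant. The paper avoids the issue by replacing Pinsker with the fractional-moment Girsanov bound of \cite[Theorem A.5]{BKS2020}, namely $d_{TV}(\mathrm{Law}(W_{[0,t]}),\mathrm{Law}(\widetilde W_{[0,t]}))\le C_\delta\bigl(\mathbb{E}\bigl(\int_0^t|\beta_s|^2\,ds\bigr)^\delta\bigr)^{1/(1+\delta)}$ for $\delta\in(0,1]$. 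One then estimates $\mathbb{E}\bigl(\int_0^t\|\mathbf{v}\|_{H^{-\gamma/2}}^2\,ds\bigr)^\delta$ by H\"older, inserting and removing the exponential weight; taking $\delta$ small enough that the resulting multiple of $\Xi_\kappa$ in the exponent is at most $\kappa$ makes the expectation finite by Lemma \ref{Lemma M}, the deterministic initial term in the energy inequality produces the factor $\exp(c_3\|x\|_{H^{1-\gamma/2}}^2)$, and the exponent comes out as $c_2=\frac{2\delta}{1+\delta}<2$. Your final step, deducing the $d$-eventual continuity from the lemma, is fine.
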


	Hence  the $d$-eventual continuity can be argued as in Theorem \ref{E Thm}.

	\begin{proposition}\label{EV Prop2}
		Assume $\mathbf{EV_1}$ - $\mathbf{EV_3}$.
		Then the Markov semigroup $\{P_t\}_{t\geq 0}$ is $d$-eventually continuous on $V$. 
	\end{proposition}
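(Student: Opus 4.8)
The plan is to follow verbatim the scheme used for Proposition~\ref{NS Prop3} and for the $d$-eventual continuity part of the proof of Theorem~\ref{HNS Thm}, with the roles of the ambient metric $\rho=\|\cdot\|_{H^{1-\gamma/2}}$ and the weaker metric $d=\|\cdot\|_{H^{-\gamma/2}}$ kept carefully apart. All the analytic content has been pushed into Lemma~\ref{EV Lemma1}, so the proposition reduces to a short deduction from its two conclusions.

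First I would fix $x,y\in V$ and $\varphi\in C_b(V)\cap L_b(V,d)$, take $\mathbf{u}=\mathbf{u}^x$ solving (\ref{Equation EV}) with $\mathbf{u}(0)=x$, and take $\tilde{\mathbf{u}}$ solving the controlled system (\ref{Equation EV0}) with $\tilde{\mathbf{u}}(0)=y$. Writing $\mathbf{v}=\mathbf{u}-\tilde{\mathbf{u}}$ and inserting ${\rm Law}(\tilde{\mathbf{u}}(t))$ as an intermediate point (recall $P_t\varphi(y)=\mathbb{E}\varphi(\mathbf{u}^y(t))$ and $P_t(y,\cdot)={\rm Law}(\mathbf{u}^y(t))$), I would estimate
\begin{equation*}
|P_t\varphi(x)-P_t\varphi(y)|\le \big|\mathbb{E}\varphi(\mathbf{u}(t))-\mathbb{E}\varphi(\tilde{\mathbf{u}}(t))\big| + \|\varphi\|_\infty\, d_{TV}\big(P_t(y,\cdot),{\rm Law}(\tilde{\mathbf{u}}(t))\big).
\end{equation*}
The first term is bounded by $\mathbb{E}\big[(\|\varphi\|_{Lip}\,\|\mathbf{v}(t)\|_{H^{-\gamma/2}})\wedge(2\|\varphi\|_\infty)\big]$; here it is crucial that $\varphi$ is $d$-Lipschitz, so the coupling error is measured in the same $H^{-\gamma/2}$-norm in which $\mathbf{v}(t)$ decays by Lemma~\ref{EV Lemma1}(1). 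The second term is bounded via Lemma~\ref{EV Lemma1}(2) by $c_1\|\varphi\|_\infty\,\|x-y\|_{H^{-\gamma/2}}^{c_2}\exp(c_3\|x\|_{H^{1-\gamma/2}}^2)$, which is independent of $t$.

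Then I would send $t\to\infty$: by Lemma~\ref{EV Lemma1}(1) and the dominated convergence theorem the first term vanishes, hence
\begin{equation*}
\limsup_{t\to\infty}|P_t\varphi(x)-P_t\varphi(y)|\le c_1\|\varphi\|_\infty\,\|x-y\|_{H^{-\gamma/2}}^{c_2}\exp(c_3\|x\|_{H^{1-\gamma/2}}^2).
\end{equation*}
Letting $y\to x$ in $V$ (equivalently in $\rho$), the right-hand side tends to $0$ since $d$ is $\rho$-continuous, so $\lim_{y\to x}\limsup_{t\to\infty}|P_t\varphi(x)-P_t\varphi(y)|=0$ for every such $\varphi$ and every $x\in V$, which is exactly $d$-eventual continuity on $V$.

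There is essentially no obstacle inside the proposition once Lemma~\ref{EV Lemma1} is granted; the real work is packaged there — deriving the exponential-weight estimate (\ref{Equation EV1}), extracting the almost sure decay of $\mathbf{v}(t)$ in $H^{-\gamma/2}$ from it (so that the event where $\mathbf{v}(t)\not\to0$ forces the weight $\exp((\nu-L)t-C_1\nu^{-1}\lambda_N^{-\gamma/2+1/3}\int_0^t\|\xi\|_{H^{-\gamma/2}}^2\,ds)$ to be bounded, contradicting $\mathbf{EV_3}$ together with the energy bound of Lemma~\ref{EV Lemma EE}), and obtaining the total variation bound by a Girsanov change of measure killing the control drift $\tfrac12\nu\lambda_N^{\gamma/2-1/3}P_N(\mathbf{u}-\tilde{\mathbf{u}})$, using $\mathbf{EV_2}$ to invert $\sigma$ on $P_NH$ and $\mathbf{EV_1}$, $\mathbf{EV_3}$ to control the resulting stochastic exponential. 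The only point to watch in the present step is to keep $d$ and $\rho$ in their correct places: the coupling-error term must be controlled in $d$ (hence the restriction to $d$-Lipschitz test functions), while the final limit $y\to x$ is harmlessly taken in the stronger norm $\rho$.
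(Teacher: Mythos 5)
Your proposal is correct and follows exactly the route the paper takes: the paper proves this proposition by the same generalized-coupling decomposition used in Proposition~\ref{NS Prop3} and in the $d$-eventual continuity part of Theorem~\ref{HNS Thm}, bounding $|P_t\varphi(x)-P_t\varphi(y)|$ by $\mathbb{E}\bigl[(\|\varphi\|_{Lip}\,\|\mathbf{v}(t)\|_{H^{-\gamma/2}})\wedge(2\|\varphi\|_{\infty})\bigr]$ plus $\|\varphi\|_{\infty}\,d_{TV}(P_t(y,\cdot),{\rm Law}(\tilde{\mathbf{u}}(t)))$ and invoking Lemma~\ref{EV Lemma1} with dominated convergence. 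Your remarks on keeping $d$ and $\rho$ in their correct places match the paper's intent, so there is nothing to correct.
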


	Finally, we show the uniform irreducibility. Again, we prove it under the uniform nondegenerate assumption.
	
	\begin{itemize}
		\item[$\mathbf{EV_2'}$] The function $\sigma$ is uniformly nondegenerate, i.e., there exists a constant $C_0$ such that for all $\mathbf{w}\in V$
		\begin{equation*}
		\sup\limits_{\mathbf{u}\in V}\|\sigma^{-1}(\mathbf{u})(\mathbf{w})\|\leq C_0\|\mathbf{w}\|.
		\end{equation*}
	\end{itemize}
	
	\paragraph{Uniform irreducibility} Still, we apply similar arguments as in \cite[Lemma 3.8]{HM2011} to show the uniform irreducibility. 
	\begin{proposition}\label{EV Prop UI}
	Assume that $\mathbf{EV_1}$ and $\mathbf{EV_2'}$, then $\{P_t\}_{t\geq 0}$ is unformly irreducibile in the $H^{-\gamma/2}$-topology.
	\end{proposition}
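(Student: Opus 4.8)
The plan is to follow the control-theoretic skeleton already used in Proposition~\ref{NS Prop UI} (the 2D Navier--Stokes case), adapting the enstrophy estimates to the Euler--Voigt setting and working throughout in the $H^{-\gamma/2}$-topology rather than the $H$-topology. Fix $R>0$, a point $x\in B(0,R)$ (in the $H^{-\gamma/2}$-norm), and $\epsilon\in(0,1)$; we must produce $T=T(\epsilon,R)>0$ and $p>0$, uniform over such $x$, with $P_T(x,B_d(0,\epsilon))\ge p$. First I would solve the \emph{deterministic} damped Euler--Voigt equation $\d\mathbf{w}+(\nu\mathbf{w}+\mathbf{w}_\gamma\cdot\nabla\mathbf{w}_\gamma+\nabla q)\,\d t=0$ with initial datum $\mathbf{w}_0=x$ (no extra shift is needed here, since unlike the NS case the origin is already the deterministic equilibrium), and use the linear damping to get $\|\mathbf{w}(t)\|_{H^{-\gamma/2}}^2\le e^{-2\nu t}\|x\|_{H^{-\gamma/2}}^2\le e^{-2\nu t}R^2$, so that $\|\mathbf{w}(T)\|_{H^{-\gamma/2}}<\epsilon/2$ for $T=T(\epsilon,R)$ large. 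One also needs an a~priori bound $\sup_{t\ge0}\|\mathbf{w}(t)\|_{H^{1-\gamma/2}}\le K<\infty$; this is where the Voigt regularization helps, since the nonlinearity $\mathbf{w}_\gamma\cdot\nabla\mathbf{w}_\gamma$ with $\Lambda^\gamma\mathbf{w}_\gamma=\mathbf{w}$ and $\gamma>2/3$ is tame enough in $H^{1-\gamma/2}$ — this is exactly the kind of estimate underlying the well-posedness quoted from \cite[Section~3.4]{NJG2017}.

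Next I would set $D(t):=\|\mathbf{u}(t)-\mathbf{w}(t)\|_{H^{-\gamma/2}}^2-(\epsilon/2)^2$, so $D(0)=\|x-x\|^2-(\epsilon/2)^2=-(\epsilon/2)^2<0$ (or, mirroring the NS proof, start the deterministic flow from $x+\tfrac{\epsilon}{2}e_1$ so that $D(0)=0$ — either normalization works), apply the Itô formula in the $H^{-\gamma/2}$ inner product, and introduce the stopping time $\tau:=\inf\{t\ge0:|D(t)|>(\epsilon/4)^2\}$. On $[0,\tau)$ the difference $\mathbf{v}=\mathbf{u}-\mathbf{w}$ is quantitatively small in $H^{-\gamma/2}$ and $\|\mathbf{u}\|_{H^{-\gamma/2}}$ is bounded by $2(R^2+\epsilon^2)$, so the drift terms — the damping $-2\nu\|\mathbf{v}\|_{H^{-\gamma/2}}^2$, the bilinear difference (controlled using the Voigt-type estimate $|\langle B(\mathbf{u}_\gamma,\mathbf{u}_\gamma)-B(\mathbf{w}_\gamma,\mathbf{w}_\gamma),\mathbf{v}\rangle_{H^{-\gamma/2}}|\lesssim \|\mathbf{v}\|_{H^{-\gamma/2}}\,\|\mathbf{v}\|_{?}\,K$, absorbed by the damping via Young), and the Itô correction $\|\sigma(\mathbf{u})\|_{H^{-\gamma/2}}^2\le B_0$ — are all bounded by a constant $c(\epsilon,R,K)$; meanwhile the quadratic variation of the martingale part satisfies the two-sided bound $\tfrac34 C_0\epsilon^2<\tfrac{\d}{\d t}\langle M\rangle_t\le 4B_0\cdot 2(R^2+\epsilon^2)$ on $[0,\tau)$, the lower bound using the uniform nondegeneracy hypothesis $\mathbf{EV_2'}$ exactly as in Proposition~\ref{NS Prop UI}. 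Then, splicing $D(t\wedge\tau)$ with an independent Wiener process beyond $\tau$ to form a semimartingale $Y(t)$ with $Y(0)=0$ satisfying the hypotheses of \cite[Lemma~I.8.3]{B1998}, one obtains $p>0$ with $\mathbb{P}(\sup_{0\le t\le T}|Y(t)|\le(\epsilon/4)^2)\ge p$ for every $x\in B(0,R)$, hence $P_T(x,B_d(0,\epsilon))=\mathbb{P}(\|\mathbf{u}(T)\|_{H^{-\gamma/2}}<\epsilon)\ge\mathbb{P}(\tau>T)\ge p$, which is the claim.

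The main obstacle I anticipate is not the probabilistic comparison step (that is essentially identical to the NS and delay-equation arguments of \cite{HM2011,B1998}), but the deterministic a~priori regularity bound $\sup_{t\ge0}\|\mathbf{w}(t)\|_{H^{1-\gamma/2}}\le K$ together with the correct interpolation/Young bookkeeping for the trilinear term in the $H^{-\gamma/2}$ pairing: one must check that $\gamma>2/3$ is precisely what makes the nonlinear contribution absorbable by the $\nu$-damping on $[0,\tau)$, and that all constants depend only on $\nu,D,B_0,C_0,R,\epsilon$ and not on $t$. I would handle this by recording the relevant commutator/product estimate in $H^{s}(D)^2$ for the periodic box, then tracking constants through the energy inequality for $\mathbf{w}$; once that bound is in hand, the rest of the proof is a line-by-line transcription of Proposition~\ref{NS Prop UI} with $|\cdot|$ replaced by $\|\cdot\|_{H^{-\gamma/2}}$ and $\|\cdot\|$ by $\|\cdot\|_{H^{1-\gamma/2}}$.
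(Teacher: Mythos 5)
Your plan follows the paper's proof of Proposition~\ref{EV Prop UI} almost line for line: deterministic damped Euler--Voigt control solution, exponential decay of $\|\mathbf{w}\|_{H^{-\gamma/2}}$, the functional $D(t)=\|\mathbf{u}_t-\mathbf{w}_t\|_{H^{-\gamma/2}}^2-(\epsilon/2)^2$, the stopping time $\tau$, the two-sided bound on $\frac{\d}{\d t}\langle M\rangle_t$ with the lower bound coming from $\mathbf{EV_2'}$, the splicing with an independent Wiener process, and the lower-bound lemma \cite[Lemma I.8.3]{B1998}. Two points, however, need correction. First, your preferred normalization $\mathbf{w}_0=x$ does not work: then $D(0)=-(\epsilon/2)^2$, so $|D(0)|>(\epsilon/4)^2$ and $\tau=0$ identically, and, more fundamentally, $\mathbf{u}_0-\mathbf{w}_0=0$ destroys the uniform lower bound $\frac{\d}{\d t}\langle M\rangle_t\geq \tfrac34 C_0\epsilon^2$ that the comparison lemma requires. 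The shift $\mathbf{w}_0=x+\tfrac{\epsilon}{2}e_1$ is not cosmetic (and has nothing to do with whether the origin is an equilibrium); it is what pins $\|\mathbf{u}_t-\mathbf{w}_t\|_{H^{-\gamma/2}}^2$ into the annulus $(\tfrac{3\epsilon^2}{16},\tfrac{5\epsilon^2}{16})$ on $[0,\tau)$, which is used both for the nondegeneracy lower bound and for concluding $\|\mathbf{u}_T\|_{H^{-\gamma/2}}<\epsilon$ on $\{\tau>T\}$. Since you mention the shifted version as an alternative, this is recoverable, but "either normalization works" is false.

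Second, the enstrophy bound $\sup_{t\geq 0}\|\mathbf{w}(t)\|_{H^{1-\gamma/2}}\leq K$ and the Young-absorption of the trilinear term, which you flag as the main obstacle, do not appear in the paper's argument at all: in the It\^o expansion of $\|\mathbf{u}-\mathbf{w}\|_{H^{-\gamma/2}}^2$ the paper records only the damping term, the It\^o correction $\|\sigma(\mathbf{u})\|_{H^{-\gamma/2}}^2\leq B_0$, and the martingale part, relying on the cancellation of the transport nonlinearity in the $H^{-\gamma/2}$ pairing (via $\langle \cdot,\mathbf{v}\rangle_{H^{-\gamma/2}}=\langle\cdot,\mathbf{v}_\gamma\rangle_{L^2}$). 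Note that unlike the Navier--Stokes case there is no Laplacian here, only the zeroth-order damping $-2\nu\|\mathbf{v}\|_{H^{-\gamma/2}}^2$, so your proposed absorption of a term of the form $\|\mathbf{v}\|_{H^{-\gamma/2}}\|\mathbf{v}\|_{?}K$ by Young's inequality would require the intermediate norm to be comparable to $\|\cdot\|_{H^{-\gamma/2}}$; if you do carry the cross term $\langle B(\mathbf{v}_\gamma,\mathbf{u}_\gamma),\mathbf{v}_\gamma\rangle_{L^2}$, you should bound it by a constant on $[0,\tau)$ (using the smallness of $\mathbf{v}$ and an a priori bound on $\mathbf{u}$ there) rather than try to absorb it into the damping.
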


	Applying Theorem \ref{Thm 5}, we derive the following theorem.
	\begin{theorem} 
		Assume that $\mathbf{EV_1}$ and $\mathbf{EV_2'}$.
		Then  $\{P_t\}_{t\geq 0}$ has unique invariant meausre $\mu$ on $V$, such that $P_t(x,\cdot)$ converges weakly to $\mu$ in the $H^{-\gamma/2}$-topology.
	\end{theorem}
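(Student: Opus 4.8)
The plan is to deduce the theorem from Theorem~\ref{Thm 5}, applied with the auxiliary distance $d(x,y):=\|x-y\|_{H^{-\gamma/2}}$ on the state space $V$. Recall that $\{P_t\}_{t\ge0}$ was noted to be $d$-Feller, so the Feller-type hypothesis of Theorem~\ref{Thm 5} is already met; what remains is (i) $d$-eventual continuity of $\{P_t\}_{t\ge0}$ on all of $V$, and (ii) the $d$-lower bound condition $\inf_{x\in V}\liminf_{t\to\infty}P_t(x,B_d(z,\epsilon))>0$ at a single $z\in V$ for every $\epsilon>0$. A first reduction: the uniform nondegeneracy $\mathbf{EV_2'}$ forces $\mathbf{EV_2}$ for every truncation level $N$ (the range of $\sigma(\mathbf{u})$ is all of $H$, with a uniformly bounded pseudo-inverse, hence contains $P_NH$), and since $\lambda_N\to\infty$ and $\gamma/2-1/3>0$ for $\gamma>2/3$, one may fix $N$ large enough that \eqref{Equation EV5}, i.e.\ $\mathbf{EV_3}$, holds. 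Thus under $\mathbf{EV_1}$ and $\mathbf{EV_2'}$ all of $\mathbf{EV_1}$-$\mathbf{EV_3}$ hold, and every statement of this subsection established under $\mathbf{EV_1}$-$\mathbf{EV_3}$ — in particular Lemma~\ref{EV Lemma1} and Proposition~\ref{EV Prop2} — is at our disposal.

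Step (i) is then exactly Proposition~\ref{EV Prop2}: for $f\in C_b(V)\cap L_b(V,d)$ one couples $\mathbf u^x$ with the controlled process $\tilde{\mathbf u}$ of \eqref{Equation EV0} started at $y$, bounds $|P_tf(x)-P_tf(y)|\le\mathbb{E}\big[(\|f\|_{Lip}\|\mathbf v(t)\|_{H^{-\gamma/2}})\wedge(2\|f\|_\infty)\big]+\|f\|_\infty\,d_{TV}\big(P_t(y,\cdot),\mathrm{Law}(\tilde{\mathbf u}(t))\big)$, and lets $t\to\infty$ then $y\to x$, invoking Lemma~\ref{EV Lemma1}(1) (a.s.\ decay of $\mathbf v$, from the exponential supermartingale bound \eqref{Equation EV1}), Lemma~\ref{EV Lemma1}(2) (the total-variation bound, which vanishes as $y\to x$), and dominated convergence, exactly as in Proposition~\ref{NS Prop3}.

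For step (ii) I would apply Proposition~\ref{prop lbc}, whose proof uses no specific property of the ambient metric and therefore holds verbatim with $d$ in place of $\rho$. Take $\mathcal V(x):=\|x\|_{H^{-\gamma/2}}^2$: Lemma~\ref{EV Lemma EE} gives $P_t\mathcal V(x)\le e^{-2\nu t}\mathcal V(x)+\tfrac{B_0}{2\nu}$, which is \eqref{eq lyapunov} with $h(t)=e^{-2\nu t}\to0$. The sublevel set $\{\mathcal V\le R\}$ is the closed $d$-ball around $0$ of radius $\sqrt R$, on which Proposition~\ref{EV Prop UI} (uniform irreducibility in the $H^{-\gamma/2}$-topology) provides, for every $\epsilon,R>0$, a time $T$ with $\inf_{x\in\{\mathcal V\le R\}}P_T(x,B_d(0,\epsilon))>0$, i.e.\ \eqref{eq irreducibility}. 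Proposition~\ref{prop lbc} then yields $\inf_{x\in V}\liminf_{t\to\infty}P_t(x,B_d(0,\epsilon))>0$ for all $\epsilon>0$, the hypothesis of Theorem~\ref{Thm 5} with $z=0$. Theorem~\ref{Thm 5} now gives asymptotic stability in the $d$-topology: a unique invariant measure $\mu$ on $V$ with $P_t(x,\cdot)$ converging weakly to $\mu$ in the $H^{-\gamma/2}$-topology for every $x\in V$.

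The step I expect to be the genuine obstacle is hidden in Lemma~\ref{EV Lemma1}(2): the total-variation bound between $P_t(y,\cdot)$ and $\mathrm{Law}(\tilde{\mathbf u}(t))$ rests on a Girsanov change of measure in which the control $\tfrac12\nu\lambda_N^{\gamma/2-1/3}P_N(\mathbf u-\tilde{\mathbf u})$ must be rewritten as $\sigma(\tilde{\mathbf u})\,\sigma^{-1}(\tilde{\mathbf u})(\cdot)$ — precisely where $\mathbf{EV_2'}$ enters — and shown to have a.s.\ finite $L^2(0,\infty)$-norm, which leans on the a.s.\ decay of $\mathbf v$ extracted from \eqref{Equation EV1}; some care is needed since $\sigma^{-1}$ and the noise are controlled in $H^{1-\gamma/2}$ while \eqref{Equation EV1} only gives decay in the weaker $H^{-\gamma/2}$-norm. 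Bounding the resulting stochastic exponential and turning it into a total-variation estimate of the stated polynomial-in-$\|x-y\|$ form is the crux; everything downstream of Lemmas~\ref{EV Lemma1}, \ref{EV Lemma EE} and Proposition~\ref{EV Prop UI} is the routine assembly above.
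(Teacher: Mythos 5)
Your proposal matches the paper's (implicit) argument exactly: the paper states this theorem without a written proof, but the intended assembly is the same as the one spelled out for Theorem~\ref{HNS Thm} --- $d$-eventual continuity from Proposition~\ref{EV Prop2} (via Lemma~\ref{EV Lemma1}), the lower bound condition from Proposition~\ref{prop lbc} applied to Lemma~\ref{EV Lemma EE} and Proposition~\ref{EV Prop UI}, and then Theorem~\ref{Thm 5} with $d=\|\cdot\|_{H^{-\gamma/2}}$. Your preliminary observation that $\mathbf{EV_2'}$ subsumes $\mathbf{EV_2}$ and that $N$ can be chosen to satisfy $\mathbf{EV_3}$ is also the paper's implicit reduction, so the proposal is correct and takes essentially the same route.
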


     \section{Proofs}\label{Sec 4}

	 \subsection{Proof of Lemma  \ref{NS Lemma1}}

	We first provide an useful lemma.
	\begin{lemma}\label{Lemma M}
	    Let $M$ be a continuous  local martingale with $M_0=0$. Define
		\begin{equation*}
		\Xi_\kappa:=\sup\limits_{t\geq 0}(M_t-\kappa\langle M\rangle_t).
		\end{equation*} Then 
		\begin{equation*}
		\mathbb{P}(\Xi_\kappa\geq R)\leq \exp(-2\kappa R),\quad\forall R\geq 0,
		\end{equation*}
	\end{lemma}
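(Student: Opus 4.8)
The plan is to reduce the estimate to Doob's maximal inequality for a nonnegative supermartingale obtained from the stochastic exponential of $2\kappa M$. One may assume $\kappa>0$ and $R>0$: for $\kappa=0$ or $R=0$ the asserted bound is at most $1$ and hence trivial (indeed $\Xi_\kappa\ge M_0-\kappa\langle M\rangle_0=0$ always).

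First I would introduce $Z_t:=\exp\!\bigl(2\kappa M_t-2\kappa^2\langle M\rangle_t\bigr)$, $t\ge0$. Since $M$ is a continuous local martingale, It\^o's formula gives $\d Z_t=2\kappa Z_t\,\d M_t$ with $Z_0=1$, so $Z$ is a strictly positive continuous local martingale; being nonnegative, it is a supermartingale (Fatou's lemma along a localizing sequence of stopping times), whence $\mathbb{E}Z_t\le Z_0=1$ for all $t$. I stress that I will \emph{not} use $\mathbb{E}Z_t=1$, which need not hold.

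Next I would rewrite the event of interest in terms of $Z$. Since $t\mapsto Z_t$ is continuous and $x\mapsto\exp(2\kappa x)$ is continuous and strictly increasing, $\sup_{t\ge0}Z_t=\exp\!\bigl(2\kappa\,\Xi_\kappa\bigr)$, so that
\begin{equation*}
\{\Xi_\kappa\ge R\}=\Bigl\{\sup_{t\ge0}Z_t\ge \exp(2\kappa R)\Bigr\}.
\end{equation*}
Then, for $\lambda>1$, set $\tau_\lambda:=\inf\{t\ge0:Z_t\ge\lambda\}$; optional sampling for the supermartingale $Z$ at the bounded times $0$ and $\tau_\lambda\wedge t$ gives $\mathbb{E}Z_{\tau_\lambda\wedge t}\le 1$, while continuity of $Z$ forces $Z_{\tau_\lambda}\ge\lambda$ on $\{\tau_\lambda\le t\}$; hence $\lambda\,\mathbb{P}(\tau_\lambda\le t)\le 1$. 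Letting $t\to\infty$ yields $\mathbb{P}(\tau_\lambda<\infty)\le\lambda^{-1}$, i.e. $\mathbb{P}(\sup_{t\ge0}Z_t>\lambda)\le\lambda^{-1}$; letting $\lambda\uparrow\exp(2\kappa R)$ (to pass from a strict to a non-strict supremum) gives $\mathbb{P}(\sup_{t\ge0}Z_t\ge\exp(2\kappa R))\le\exp(-2\kappa R)$, which combined with the displayed identity is precisely $\mathbb{P}(\Xi_\kappa\ge R)\le\exp(-2\kappa R)$.

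I do not expect a genuine obstacle here, as this is a classical exponential martingale bound; the only points that need care are that $Z$ is a priori only a \emph{local} martingale (so that the supermartingale property coming from nonnegativity, rather than $\mathbb{E}Z_t=1$, is what is available) and the harmless passage from strict to non-strict suprema, which is handled by the continuity of $Z$ together with the limit $\lambda\uparrow\exp(2\kappa R)$.
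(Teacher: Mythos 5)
Your proof is correct, but it takes a different route from the paper. The paper invokes the Dambis--Dubins--Schwarz theorem to write $M_t=\widehat{B}_{\langle M\rangle_t}$ for a Brownian motion $\widehat{B}$ (possibly on an enlarged probability space), bounds $\Xi_\kappa\leq\sup_{t\geq 0}(\widehat{B}_t-\kappa t)$, and then quotes the classical fact that the supremum of a Brownian motion with drift $-\kappa$ is exponentially distributed with parameter $2\kappa$. You instead work directly with the stochastic exponential $Z_t=\exp(2\kappa M_t-2\kappa^2\langle M\rangle_t)$, note that as a nonnegative continuous local martingale it is a supermartingale with $\mathbb{E}Z_t\leq 1$, and apply the maximal inequality $\mathbb{P}(\sup_t Z_t>\lambda)\leq\lambda^{-1}$ via optional sampling at $\tau_\lambda\wedge t$. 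The two arguments are essentially two faces of the same exponential-martingale estimate (the drifted-Brownian-motion formula the paper relies on is itself usually proved by your supermartingale argument), but yours is more self-contained: it avoids the time-change and the enlargement of the probability space, handles the edge cases $\kappa=0$ and $\langle M\rangle_\infty<\infty$ transparently, and dispenses with the paper's stray and unnecessary phrase about $\kappa$ being ``sufficiently small.'' Your care about using only the supermartingale property (not $\mathbb{E}Z_t=1$) and about the passage from strict to non-strict suprema is exactly right.
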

	
	\begin{proof}[Proof.] 
		By the Dambis–Dubins–Schwarz theorem, there exists (maybe on an extended probability space) a Brownian motion $\widehat{B}$ such that $M_t=\widehat{B}_{\langle M\rangle_t}$ for $t\geq 0$. 
		Let $\kappa>0$ be a sufficiently small parameter to be chosen later. Hence
		\begin{equation*}
		    \Xi_\kappa\leq \sup\limits_{t\geq 0}(\widehat{B}_t-\kappa t),
		\end{equation*}
		we have
		\begin{equation*}
		\mathbb{P}(\Xi_\kappa\geq R)\leq \exp(-2\kappa R),\quad\forall R\geq 0.
		\end{equation*}
	\end{proof}
	
	\paragraph{Proof of Lemma \ref{NS Lemma1}:}
 
	\begin{proof}[Proof.] 
		(1)\  Applying the It\^o lemma to (\ref{Equation NS}) we find that
		\begin{equation}\label{Equation NS2}
		d|\mathbf{u}|^2+2\nu\|\mathbf{u}\|^2dt=|\sigma(\mathbf{u})|^2dt+2\langle\sigma(\mathbf{u}),\mathbf{u}\rangle dW_t.
		\end{equation}\par 
		Let $M_t:=\int_{0}^{t}2\langle\sigma(\mathbf{u}),\mathbf{u}\rangle dW_s $ be a continuous local martingale. (\ref{Equation NS2}) implies that
		\begin{equation*}
		|\mathbf{u}|^2+2\nu\int_{0}^{t}\|\mathbf{u}\|^2ds\leq |x|^2+B_0t+M_t,
		\end{equation*}
		and 
		\begin{equation*}
		d\langle M\rangle_t=4|\langle\sigma(\mathbf{u}),\mathbf{u}\rangle|^2dt\leq 4B_0|\mathbf{u}|^2dt\leq 4B_0\|\mathbf{u}\|^2dt.
		\end{equation*}	\par 
		By Lemma \ref{Lemma M}, we have
		\begin{equation}\label{Equation NS5.2}
		\mathbb{P}(\Xi_\kappa\geq R)\leq \exp(-2\kappa R),\quad\forall R\geq 0,
		\end{equation}
		and
		\begin{equation}\label{Equation NS5.3}
		|\mathbf{u}_t|^2+(2\nu-4\kappa B_0)\int_{0}^{t}\|\mathbf{u}\|^2ds\leq |x|^2+B_0t+\Xi_\kappa.
		\end{equation}
		Let $\kappa=\frac{\nu}{4B_0}$ for simplicity, we obtain that
		\begin{equation*}
		\nu\int_{0}^{t}\|\mathbf{u}\|^2ds\leq |x|^2+B_0t+\Xi_\kappa.
		\end{equation*}
		Hence we obtain that 
		\begin{equation*}
		\limsup\limits_{t\rightarrow\infty}	\frac{\nu}{t}\int_{0}^{t}\|\mathbf{u}\|^2ds\leq B_0,\quad \text{ almost surely},
		\end{equation*}
	which in turn implies that $\exp\{(\nu\lambda_N-L)t-\frac{C_D^2}{\nu}\int_{0}^{t}\|\mathbf{u}\|^2ds\}$ diverges to infinity almost surely as $t\rightarrow\infty$ under Assumption $\mathbf{H_3}.$  Then it follows from (\ref{Equation NS9.1}) that $\mathbf{v}_t$ converges to $0$ almost surely.\\		
		(2)\  We can write (\ref{Equation NS6}) in the form (\ref{Equation NS}) with $x$ changed to $y$, and with $W_t$  replaced by
		\begin{equation}\label{Equation NS10}
		\widetilde{W}_t:=W_t+\int_{0}^{t}\beta_sds,\quad\beta_t:=\frac{\nu\lambda_N}{2}\sigma^{-1}(\tilde{\mathbf{u}}^y_t)P_N(\mathbf{u}_t^x-\tilde{\mathbf{u}}_t^y).
		\end{equation}
		By $\mathbf{H_2}$, the pseudo-inverse operator $\sigma^{-1}(\tilde{\mathbf{u}}^y_t)$ is well defined and bounded; thus there exists a constant $C_0>0$ such that for all $t\geq0$
		\begin{equation*}
		|\beta_t|\leq \nu\lambda_NC_0 |\mathbf{v}_t|.
		\end{equation*}

		Further, as  $\mathbf{u}^y_t$ is the strong solution to equation (\ref{Equation NS}) with the initial value y, we can regard $\mathbf{u}^y_t$ as an image of the driving noise under some measurable mapping
		
		\begin{equation*}
		\Phi_t^y:C({0,t};\mathbb{R}^m)\rightarrow H.
		\end{equation*}
		In other words, $\mathbf{u}^y_t= \Phi_t^y (W_{[0,t]}),$ where $W_{[0,t]}:=\{W_s:s\in[0,t]\},\;t \geq 0.$ It follows from the Girsanov theorem that $\text{Law}(\widetilde{W}_{[0,t]})$ is absolutely continuous with respect to $\text{Law}(W_{[0,t]}).$ Therefore, by the uniqueness of the solution, we have $\tilde{\mathbf{u}}^y_t=\Phi_t^y (\widetilde{W}_{[0,t]}).$\par 
		Thus, by \cite[Theorem A.5]{BKS2020}, we  derive that for any $\delta\in(0,1]$ there exists $C_\delta > 0$ such that for any $t\geq 0$
		\begin{equation*}
		\begin{aligned}
		d_{TV}(P_t(y,\cdot),\text{Law}(\tilde{\mathbf{u}}^y_t))&=d_{TV}(\text{Law}({\mathbf{u}}^y_t),\text{Law}(\tilde{\mathbf{u}}^y_t))\\
		&=d_{TV}(\text{Law}( \Phi_t^y (W_{[0,t]}),\text{Law}(\Phi_t^y (\widetilde{W}_{[0,t]})))\\
		&\leq d_{TV}(\text{Law}(W_{[0,t]}),\text{Law}(\widetilde{W}_{[0,t]}))\\
		&\leq C_\delta(\mathbb{E}(\int_{0}^{t}|\beta_s|^2ds)^\delta)^{1/(1+\delta)}\\
		&\leq  C_\delta(\nu\lambda_N C_0)^{\frac{2\delta}{1+\delta}}(\mathbb{E}(\int_{0}^{t}|\mathbf{v}_s|^2ds)^\delta)^{1/(1+\delta)}.
		\end{aligned}
		\end{equation*}
		
		We need to estimate $\mathbb{E}(\int_{0}^{t}|\mathbf{v}_s|^2ds)^\delta.$ We shall split it into two parts, and estimate each term as a whole, seperately.	For brevity, denote 
		\begin{equation*}
		h_p(t)=(\nu\lambda_N-\frac{p+1}{2}L)t-\frac{C_{D}^2}{\nu}\int_{0}^{t}\|\mathbf{u}_s\|^2ds.
		\end{equation*}\par 
		Fix $\epsilon>0,$ by H\"{o}lder ineqaulity
		\begin{equation}\label{Equation NS14.1}
		\begin{aligned}
		\mathbb{E}(\int_{0}^{t}|\mathbf{v}_s|^2ds)^\delta&=\mathbb{E}(\int_{0}^{t}|\mathbf{v}_s|^2\exp(h_p-\epsilon s)\exp(-h_p+\epsilon s)ds)^{\delta}\\
		&\leq \mathbb{E}[(\int_{0}^{t}|\mathbf{v}_s|^{2p}\exp[p(h_p-\epsilon s)]ds)^{\frac{\delta}{p}} (\int_{0}^{t}\exp[q(-h_p+\epsilon s)]ds)^{\frac{\delta}{q}}]\\
		&\leq (\int_{0}^{t}\mathbb{E}|\mathbf{v}_s|^{2p}\exp[p(h_p-\epsilon s)]ds)^{\frac{\delta}{p}}(\mathbb{E}(\int_{0}^{t}\exp[q(-h_p+\epsilon s)]ds)^{\frac{(p-1)\delta}{p-\delta}})^{\frac{p-\delta}{p}},
		\end{aligned}
		\end{equation}
		where $p>1,1/p+1/q=1.$\par

		The estimation of the first term is similar to (\ref{Equation NS9.1}). For $p>1,$ we use It\^o lemma again to obtain
		\begin{equation*}
		d|\mathbf{v}|^{2p}=p|\mathbf{v}|^{2p-2}d|\mathbf{v}|^2+\frac{p(p-1)}{2}|\langle\sigma(\mathbf{u}^x)-\sigma(\tilde{\mathbf{u}}^y),\mathbf{v}\rangle|^2dt,
		\end{equation*}
		hence 
		\begin{equation*}
		d|\mathbf{v}|^{2p}+p(\nu\lambda_N-\frac{p+1}{2}L)|\mathbf{v}|^{2p}dt\leq\frac{pC_{D}^2}{\nu}|\mathbf{v}|^{2p}\|\mathbf{u}^x\|^2dt+p|\mathbf{v}|^{2p-2}\langle\sigma(\mathbf{u}^x)-\sigma(\tilde{\mathbf{u}}^y),\mathbf{v}\rangle dW_t,
		\end{equation*}
		which implies that
		\begin{equation}\label{Equation NS9.2}
		\mathbb{E}[|\mathbf{v}|^2\exp((\nu\lambda_N-\frac{p+1}{2}L)t-\frac{C_{D}^2}{\nu}\int_{0}^{t}\|\mathbf{u}\|^2ds)]^p\leq |x-y|^{2p}.
		\end{equation}\par

		Multiplying (\ref{Equation NS9.2}) with $ \exp(-p\epsilon t)$, then integrating it form $0$ to $t$, it follows that
		\begin{equation*}
		\int_{0}^{t}\mathbb{E}|\mathbf{v}|^{2p}\exp[p(h_p-\epsilon s)]ds\leq (p\epsilon)^{-1}|x-y|^{2p},\quad\forall t\geq 0.
		\end{equation*}
		To control $\mathbb{E}(\int_{0}^{t}\exp[q(-h_p+\epsilon s)]ds)^{\frac{(p-1)\delta}{p-\delta}}$, we need some estimates of $\int_{0}^{t}\|\mathbf{u}\|^2ds.$
		By (\ref{Equation NS5.3})
		\begin{equation*}
		-h_p(t)\leq \frac{ C_{D}^2}{\nu^2}|x|^2+(-\nu\lambda_N+\frac{p+1}{2}L+\frac{C_{D}^2}{\nu^2}B_0)t+\frac{ C_{D}^2}{\nu^2}\Xi_\kappa.
		\end{equation*}

		Denoted by $K:=\nu\lambda_N-L-\frac{C_D^2}{\nu^2}B_0>0,\;k=\frac{C_D^2}{\nu^2}$, fixing $p=1+\frac{K}{L},\;\epsilon=\frac{K}{4}$, we have
		\begin{equation}\label{Equation NS14.2}
		\begin{aligned}
		\mathbb{E}(\int_{0}^{t}\exp[q(-h_p(s)+\epsilon s)]ds)^{\frac{(p-1)\delta}{p-\delta}}&\leq \exp(\frac{kp\delta}{p-\delta}|x|^2)\int_{0}^{t}e^{-\frac{(Kp\delta }{4(p-\delta)}s}ds	\mathbb{E}\exp(\frac{kp\delta}{p-\delta}\Xi_\kappa).\\
		&\leq \frac{Kp\delta }{4(p-\delta)}\exp(\frac{kp\delta}{p-\delta}|x|^2)	\mathbb{E}\exp(\frac{kp\delta}{p-\delta}\Xi_\kappa).
		\end{aligned}
		\end{equation}\par

		Now we can choose $\delta$ such that $\frac{kp\delta}{p-\delta}=\kappa,$ then by (\ref{Equation NS5.2}) and (\ref{Equation NS14.2}),  we have
		\begin{equation*}
		\mathbb{E}(\int_{0}^{t}\exp[q(-h_p+\epsilon s)]ds)^{\frac{(p-1)\delta}{p-\delta}}\leq \frac{K\nu }{8kB_0}\exp(\frac{\nu}{4B_0}|x|^2).
		\end{equation*} \par 
		Combining these results, we have
		\begin{equation*}
		\begin{aligned}
		d_{TV}(P_t(y,\cdot),\text{Law}(\tilde{\mathbf{u}}^y_t))&\leq  C_\delta(\nu\lambda_N C_0)^{\frac{2\delta}{1+\delta}}(\mathbb{E}(\int_{0}^{t}|\mathbf{v}_s|^2ds)^\delta)^{1/(1+\delta)}\\
		&\leq C(\int_{0}^{t}\mathbb{E}|\mathbf{v}|^{2p}\exp[p(h_p-\epsilon s)]ds)^{\frac{\delta}{p(1+\delta)}}\\
            &\quad \cdot(\mathbb{E}(\int_{0}^{t}\exp[p(-h_p+\epsilon s)]ds)^{\frac{(p-1)\delta}{p-\delta}})^{\frac{p-\delta}{p(1+\delta)}}\\
		&\leq C|x-y|^{\frac{2\delta}{1+\delta}}\exp(\frac{\nu}{4B_0}|x|^2),
		\end{aligned}
		\end{equation*}
		where $C=C(\nu,\lambda_N,B_0,L,C_0,C_D),\;\delta=\frac{p\nu}{\nu+4pkB_0},\;p=1+\frac{K}{L},\;K=\nu\lambda_N-L-\frac{C_D^2}{\nu^2}B_0,\;k=\frac{C_D^2}{\nu^2}.$
	\end{proof}

	\subsection{Proof of Lemma  \ref{Lemma L1}}
	
	\begin{proof}[Proof.] 
	To prove Lemma \ref{Lemma L1}, we use the Fourier coefficients of $\mathbf{u}_t^x$. Recall that
	\begin{equation*}
	\widehat{\mathbf{u}^x_t}(k)=(2\pi)^{-d}\int_{\mathbb{T}^d}\mathbf{u}^x_t(\xi)e^{-i\xi\cdot k}d\xi,\;k\in\mathbb{Z}_*^d,
	\end{equation*}
	and we denote $\widehat{\mathbf{u}^x_t}(k):=a_k-ib_k$ for brievity, where
	\begin{equation*}
	a_k=(2\pi)^{-d}\int_{\mathbb{T}^d}\mathbf{u}^x_t(\xi)\cos (\xi\cdot k)d\xi,,\quad b_k=(2\pi)^{-d}\int_{\mathbb{T}^d}\mathbf{u}^x_t(\xi)\sin (\xi\cdot k)d\xi,\;k\in\mathbb{Z}_*^d.
	\end{equation*}
	Then
	\begin{equation*}
	A\mathbf{u}_t^x(\xi)=\sum_{k\in\mathbb{Z}_*^d}-\gamma(k)(a_k\cos (\xi\cdot k)+b_k\sin (\xi\cdot k)),
	\end{equation*}
	and
	\begin{equation*}
	B(\mathbf{u}_t^x,\mathbf{u}_t^x)(\xi)=\sum_{k\in\mathbb{Z}_*^d}\mathbf{u}_t^x(0)\cdot k(b_k\cos (\xi\cdot k)-a_k\sin (\xi\cdot k)).
	\end{equation*}
	we denote $Q_{k,1}^{1/2}(\mathbf{u}_t^x)dW_t$ and $Q_{k,2}^{1/2}(\mathbf{u}_t^x)dW_t$ as the component of $Q(\mathbf{u}_t^x)^{1/2}dW_t$ on modes $\cos (\xi\cdot x)$ and $\sin (\xi\cdot x)$, respsectively. Hence we obtain (\ref{Equation L0}) in the Fourier coeficients form as 
	\begin{equation*}
	\begin{aligned}
	da_k&=-\gamma(k)a_kdt+\mathbf{u}_t^x(0)\cdot k b_kdt+Q_{k,1}^{1/2}(\mathbf{u}_t^x)dW_t,\\
	db_k&=-\gamma(k)b_kdt-\mathbf{u}_t^x(0)\cdot k a_kdt+Q_{k,2}^{1/2}(\mathbf{u}_t^x)dW_t.
	\end{aligned}
	\end{equation*}
	Note that for $|k|>M,$ $q_k(\cdot)\equiv 1,$ we write $Q_{k,i}^{1/2}=Q_{k,i}^{1/2}(\mathbf{u}_t^x)$ for $i=1,2,\;|k|>M.$\par 
	
	Applying It\^o lemma, we have
	\begin{equation*}
	\begin{aligned}
	d(|a_k|^2+|b_k|^2)&=-2\gamma(k)(|a_k|^2+|b_k|^2)dt+q_k(\mathbf{u}_t^x)\gamma(k)\text{Tr }\mathcal{E}(k)dt+2\langle a_k,Q_{k,1}^{1/2}dW_t\rangle\\
        &\quad+2\langle b_k,Q_{k,2}^{1/2}dW_t\rangle \\
	&\leq -2\gamma(k)(|a_k|^2+|b_k|^2)dt+\beta\gamma(k)\text{Tr }\mathcal{E}(k)dt+2\langle a_k,Q_{k,1}^{1/2}dW_t\rangle\\
        &\quad+2\langle b_k,Q_{k,2}^{1/2}dW_t\rangle
	\end{aligned}
	\end{equation*}
	Hence by Gronwall inequality,
	\begin{equation*}
		\mathbb{E}|k|^{2m}(|a_k|^2+|b_k|^2)\leq e^{-2\gamma(k)t}|k|^{2m}|x_k|^2+\beta|k|^{2m}\text{Tr }\mathcal{E}(k),
	\end{equation*}
	where $ x_k=(2\pi)^{-d}\int_{\mathbb{T}^d}xe^{-i\xi\cdot k}d\xi,\;k\in\mathbb{Z}_*^d$.\par 	
	Considering assumptions $\mathbf{H_1},\mathbf{H_2}$, finally, we obtain
	\begin{equation*}
	\begin{aligned}
		\mathbb{E}|\mathbf{u}_t|^2&=\sum_{k\in\mathbb{Z}_*^d}\mathbb{E}|k|^{2m}(|a_k|^2+|b_k|^2)\leq\sum_{k\in\mathbb{Z}_*^d} e^{-2\gamma(k)t}|k|^{2m}|x_k|^2+\beta\sum_{k\in\mathbb{Z}_*^d}|k|^{2m}\text{Tr }\mathcal{E}(k)\\
		&\leq e^{-2\gamma_*t}|x|^2+\beta\gamma_*^{-\alpha}|||\mathcal{E}|||.
	\end{aligned}
	\end{equation*} 

	\end{proof}
	
  \subsection{Proof of Lemma  \ref{Lemma L2}}
 
	\begin{proof}[Proof.] 
	The proof resembles that of \cite[Lemma 2.4]{SS2002} and \cite[Lemma 3.8]{HM2011}. Fix $x\in B(0,R),$ let $\mathbf{w}$ be the solution of equation
	\begin{equation*}
	\left\{	\begin{aligned}
		&d\mathbf{w}_t=A\mathbf{w}_tdt +B(\mathbf{w}_t,\mathbf{w}_t)dt,\\
		&\mathbf{w}_0=x.
		\end{aligned}\right.
	\end{equation*}
	Differentiating $|\mathbf{w}_t|^2$ over $t$, it is easy to show that 
	\begin{equation*}
		|\mathbf{w}_t|^2\leq e^{-2\gamma_*t}|x|^2 \leq e^{-2\gamma_*t}R^2.
	\end{equation*}
	Hence, we choose $T=T(R)>0$ sufficiently large such that $|\mathbf{w}_t|<\epsilon/2$. Furthermore, same as the proof in \cite[Section 5.1]{KPS2010}, there exists some $K>0$ such that
	\begin{equation*}
		\|\mathbf{w}_t\|^2=\|S(t)x\|^2.
	\end{equation*}
	While $S(t)$ is bounded form $\mathcal{X}$ to $\mathcal{V}$ by \cite[Lemma 6]{KPS2010}, we obtain
	\begin{equation*}
		\sup\limits_{t\geq 0}\|\mathbf{w}_t\|=\sup\limits_{t\geq 0}\|S(t)x\|\leq K<\infty.
	\end{equation*}\par 
	Define 
	\begin{equation*}
		D(t):=|\mathbf{u}_t-\mathbf{w}_t-\frac{\epsilon}{2}e|^2-(\frac{\epsilon}{2})^2,
	\end{equation*}
	where $e(x):=e^{ij\cdot x}$ for some $j\in\mathbb{Z}_*^d$. Then 
	\begin{equation*}
	\begin{aligned}
		dD(t)&=2\langle\mathbf{u}_t-\mathbf{w}_t-\frac{\epsilon}{2}e,A(\mathbf{u}_t-\mathbf{w}_t)+B(\mathbf{u}_t,\mathbf{u}_t)-B(\mathbf{w}_t,\mathbf{w}_t)\rangle dt\\
		&\quad+\sum_{k\in\mathbb{Z}_*^d}q_k(\mathbf{u}_t)\gamma(k)|k|^{2m}\text{Tr }\mathcal{E}(k)dt+2\langle \mathbf{u}_t-\mathbf{w}_t-\frac{\epsilon}{2}e, Q^{1/2}(\mathbf{u}_t)dW_t\rangle ,
	\end{aligned}
	\end{equation*}
	and $D(0)=0$. Define 
	\begin{equation*}
		\tau:=\inf\{t\geq 0:|D(t)|>(\frac{\epsilon}{4})^2\}.
	\end{equation*}
	For $t\in [0,\tau)$,
	\begin{equation*}
		\frac{3\epsilon^2}{16}<|\mathbf{u}_t-\mathbf{w}_t-\frac{\epsilon}{2}e|^2< \frac{5\epsilon^2}{16},\quad|\mathbf{u}_t-\mathbf{w}_t|^2< \frac{9\epsilon^2}{16},\quad |\mathbf{u}_t|^2<|\mathbf{w}_t|^2+ \frac{9\epsilon^2}{16}\leq R^2+ \frac{9\epsilon^2}{16}.
	\end{equation*}
	Then
	\begin{equation*}
		\langle\mathbf{u}_t-\mathbf{w}_t-\frac{\epsilon}{2}e,A(\mathbf{u}_t-\mathbf{w}_t)\rangle\leq \frac{\epsilon}{2}\gamma(j)|\mathbf{u}_t-\mathbf{w}_t|,
	\end{equation*}
	 and that
	\begin{equation*}
	\begin{aligned}
	&\langle\mathbf{u}_t-\mathbf{w}_t-\frac{\epsilon}{2}e,B(\mathbf{u}_t,\mathbf{u}_t)-B(\mathbf{w}_t,\mathbf{w}_t)\rangle\\
    &=\langle\mathbf{u}_t-\mathbf{w}_t-\frac{\epsilon}{2}e,B(\mathbf{u}_t,\mathbf{u}_t-\mathbf{w}_t)\rangle+\langle\mathbf{u}_t-\mathbf{w}_t-\frac{\epsilon}{2}e,B(\mathbf{u}_t-\mathbf{w}_t,\mathbf{w}_t)\rangle\\
	&=-\frac{\epsilon}{2}	\langle e,B(\mathbf{u}_t,\mathbf{u}_t-\mathbf{w}_t)\rangle+\langle\mathbf{u}_t-\mathbf{w}_t-\frac{\epsilon}{2}e,B(\mathbf{u}_t-\mathbf{w}_t,\mathbf{w}_t)\rangle\\
	&\leq C_1|\mathbf{u}_t|\;|\mathbf{u}_t-\mathbf{w}_t|+C_1|\mathbf{u}_t-\mathbf{w}_t-\frac{\epsilon}{2}e|\,|\mathbf{u}_t-\mathbf{w}_t|\,\|\mathbf{w}_t\|,
	\end{aligned}
	\end{equation*} 
	for some constant $C_1>0.$ Furthermore, letting $M_t:=\int_{0}^{t}2\langle \mathbf{u}_s-\mathbf{w}_s-\frac{\epsilon}{2}e, Q^{1/2}(\mathbf{u}_s)dW_s\rangle $  be the local martingale part, we infer that
	\begin{equation*}
		\frac{d}{dt}\langle M\rangle_t=4|Q^{1/2}(\mathbf{u}_t)(\mathbf{u}_t-\mathbf{w}_t-\frac{\epsilon}{2}e)|^2\leq \beta C_2|||\mathcal{E}|||\,|\mathbf{u}_t-\mathbf{w}_t-\frac{\epsilon}{2}e|^2,
	\end{equation*}
	 for some constant $C_2>0$. On ther other hand, let $N\in\mathbb{N}$ be sufficently large such that
	 \begin{equation*}
	 	\sup\limits_{0\leq t\leq \tau\wedge T}|P_N(\mathbf{u}_t-\mathbf{w}_t-\frac{\epsilon}{2}e)|^2>	\frac{\epsilon^2}{16}.
	 \end{equation*}
	 By $\mathbf{H_3}$, the inverse operator $Q^{-1/2}(\mathbf{u}_t)$ is well defined and bounded, thus there exists a constant $C_0>0$ such that  $|Q^{-1/2}(\mathbf{u}_t)(P_Ny)|\leq C_0|P_Ny|$ for $y\in\mathcal{X}$. Consequently,
	\begin{equation*}
	\begin{aligned}
		\frac{d}{dt}\langle M\rangle_t&=4|Q^{1/2}(\mathbf{u}_t)(\mathbf{u}_t-\mathbf{w}_t-\frac{\epsilon}{2}e)|^2\geq 4|Q^{1/2}(\mathbf{u}_t)(P_N(\mathbf{u}_t-\mathbf{w}_t-\frac{\epsilon}{2}e))|^2\\
		&\geq 4 C_0|P_N(\mathbf{u}_t-\mathbf{w}_t-\frac{\epsilon}{2}e)|^2>\frac{C_0\epsilon^2}{4}>0.
	\end{aligned}
	\end{equation*}\par 	
	Let now $\widetilde{W}$ be a Wiener	process that is independent of $W$ and define
	\begin{equation*}
		Y(y):=D(t\wedge \tau)+(\widetilde{W}_t-\widetilde{W}_\tau)\mathbf{1}_{\{t\geq\tau\}}.
	\end{equation*}
	Then $Y(t)$ is a semimartingale with $Y(0)=0$, which fulfills the conditions of \cite[Lemma I.8.3]{B1998} . Therefore, there exists $p>0$ such that for all $x\in B(0,R)$,
	\begin{equation*}
		\mathbb{P}(\sup\limits_{0\leq t\leq T}|Y(t)|\leq (\frac{\epsilon}{4})^2)\geq p.
	\end{equation*}\par 
	Then
	\begin{equation*}
		P_T(x,B(0,\epsilon))=\mathbb{P}(|\mathbf{u}_T|<\epsilon)\geq \mathbb{P}(\tau> T)\geq 	\mathbb{P}(\sup\limits_{0\leq t\leq T}|Y(t)|\leq (\frac{\epsilon}{4})^2)\geq p
	\end{equation*}
	for all $x\in B(0,R),$ completing the proof.
	\end{proof}

     \subsection{Proof of Lemma  \ref{Lemma L3}}
	
		\begin{proof}[Proof.] 
	(1) By (\ref{Equation L8}), again, applying It\^o lemma and $\mathbf{H_2}$, we obtain
	\begin{equation*}
	d|\mathbf{v}_t|^8\leq -4\gamma_*|\mathbf{v}_t|^8dt+(28K^2\|Q\|-\lambda)|P_N\mathbf{v}_t|^2|\mathbf{v}|^6dt+4\tilde{C} h(\|Q_N\mathbf{u}_t^x\|)|\mathbf{v}_t|^8dt+8|\mathbf{v}_t|^6g_t.
	\end{equation*}
	Let $\lambda = 28K^2\|Q\|$ and define stopping times
	\begin{equation*}
		\tau_n:=\inf\{t\geq 0:|\mathbf{v}_t|>n\}.
	\end{equation*}	
	Using Gronwall inequality, we obtain that
	\begin{equation*}
		\mathbb{E}|\mathbf{v}_{t\wedge \tau_n}|^8\exp\{4\gamma_*t\wedge\tau_n-4\tilde{C}\int_{0}^{t\wedge\tau_n}h(\|Q_N\mathbf{u}_s^x\|)ds\}\leq |x-y|^8.
	\end{equation*}
	Denote $\varphi_n(t):=\exp\{2\gamma_*t\wedge\tau_n-2\tilde{C}\int_{0}^{t\wedge\tau_n}h(\|Q_N\mathbf{u}_s^x\|)ds\}$. By Cauchy-Schwarz inequality, we have
	\begin{align*}
			\mathbb{E}|\mathbf{v}_{t\wedge \tau_n}|^4&=\mathbb{E}|\mathbf{v}_{t\wedge \tau_n}|^4\varphi_n(t)\varphi_n^{-1}(t)\\
			&\leq (\mathbb{E}|\mathbf{v}_{t\wedge \tau_n}|^8\varphi_n^2(t))^{1/2}(\mathbb{E}\varphi_n^{-2}(t))^{1/2}\\
			&\leq |x-y|^4(\mathbb{E}\exp\{-4\gamma_*t\wedge\tau_n+4\tilde{C}\int_{0}^{t\wedge\tau_n}h(\|Q_N\mathbf{u}_s^x\|)ds\})^{1/2}\\
			&\leq  |x-y|^4(\mathbb{E}\exp\{-8\gamma_*t\wedge\tau_n\})^{1/4}(\mathbb{E}\exp\{8\tilde{C}\int_{0}^{t\wedge\tau_n}h(\|Q_N\mathbf{u}_s^x\|)ds\})^{1/4}
	\end{align*}
	Letting $n$ increse to infinity, it implies that
	\begin{equation*}
		\mathbb{E}|\mathbf{v}_t|^4\leq |x-y|^4 e^{-2\gamma_*t}(\mathbb{E}\exp\{8\tilde{C}\int_{0}^{t}h(\|Q_N\mathbf{u}_s^x\|)ds\})^{1/4}.
	\end{equation*} 
	The proof of part (1) of the lemma will be completed as soon as we can show that
	there exists an $N_0>M$ such that, for all $N\geq N_0,$
	\begin{equation*}
		\mathbb{E}|\exp\{8\tilde{C}\int_{0}^{t}h(\|Q_N\mathbf{u}_s^x\|)ds\}\leq C_1e^{\kappa t},
	\end{equation*}
	for some $C_1>0$ and $0<\kappa<8\gamma_*.$\par 

	 For $z=(x_k,y_k),\,k\in\mathbb{Z}_*^d,\,|k|\geq N,\,x_k,y_k\in\mathbb{R},$ define
	\begin{equation*}
		H(z):=(1+\sum_{k\in\mathbb{Z}_*^d,|k|\geq N}\gamma(k)^{-1}|k|^{2(m+1)}(x_k^2+y_k^2))^{1/2}.
	\end{equation*}
	Then
	\begin{equation*}
		H'(z)=H^{-1}(z)(\gamma(k)^{-1}|k|^{2(m+1)}x_k,\gamma(k)^{-1}|k|^{2(m+1)}y_k)_{k\in\mathbb{Z}_*^d,\,|k|\geq N}
	\end{equation*} and
	\begin{equation*}
	\begin{aligned}
		H''(z)&=H^{-1}(z)\text{ diag }(\gamma(k)^{-1}|k|^{2(m+1)}x_k,\gamma(k)^{-1}|k|^{2(m+1)}y_k)_{k\in\mathbb{Z}_*^d,\,|k|\geq N} \\
	&\quad-H^{-3}(z)(\gamma(k)^{-1}|k|^{2(m+1)}x_k,\gamma(k)^{-1}|k|^{2(m+1)}y_k)\\
 &\quad\quad\otimes(\gamma(k)^{-1}|k|^{2(m+1)}x_k,\gamma(k)^{-1}|k|^{2(m+1)}y_k)_{k\in\mathbb{Z}_*^d,\,|k|\geq N}.
	\end{aligned}
	\end{equation*}\par 
	Let $u:=(a_k,b_k)_{k\in\mathbb{Z}_*^d,\, |k|\geq N}$. Applying It\^o lemma, we have
	\begin{equation*}
	\begin{aligned}
		0\leq H(u)&=1-\int_{0}^{t}H^{-1}(u)\sum_{k\in\mathbb{Z}_*^d,|k|\geq N}|k|^{2(m+1)}(a_k^2+b_k^2)ds\\
		&\quad +\frac{1}{2}\int_{0}^{t}H^{-1}(u)\sum_{k\in\mathbb{Z}_*^d,|k|\geq N}|k|^{2(m+1)}\text{Tr }\mathcal{E}(k)ds\\
		&\quad-\frac{1}{2}\int_{0}^{t}H^{-3}(u)\sum_{k\in\mathbb{Z}_*^d,|k|\geq N}\gamma(k)^{-1}|k|^{4(m+1)}\text{Tr }\mathcal{E}(k)(a_k^2+b_k^2)ds\\
		&\quad+\int_{0}^{t}H^{-1}(u)\sum_{k\in\mathbb{Z}_*^d,|k|\geq N}\gamma(k)^{-1}|k|^{2(m+1)}(\langle a_k,Q_{k,1}^{1/2}dW_t\rangle+\langle b_k,Q_{k,2}^{1/2}dW_t\rangle).
	\end{aligned}
	\end{equation*}
	By definition of $h(\|Q_N\mathbf{u}_t^x\|)$, we obtain 
	\begin{equation*}
	\begin{aligned}
		8\tilde{C}\int_{0}^{t}h(\|Q_N\mathbf{u}_s^x\|)ds&\leq	 8\tilde{C}\int_{0}^{t}H^{-1}(u)\sum_{k\in\mathbb{Z}_*^d,|k|\geq N}|k|^{2(m+1)}(a_k^2+b_k^2)ds\\
		&\leq 	8\tilde{C}+	4\tilde{C}\int_{0}^{t}H^{-1}(u)\sum_{k\in\mathbb{Z}_*^d,|k|\geq N}|k|^{2(m+1)}\text{Tr }\mathcal{E}(k)ds\\
		&\quad+	8\tilde{C}\int_{0}^{t}H^{-1}(u)\sum_{k\in\mathbb{Z}_*^d,|k|\geq N}\gamma(k)^{-1}|k|^{2(m+1)}(\langle a_k,Q_{k,1}^{1/2}dW_t\rangle\\
        &\quad+\langle b_k,Q_{k,2}^{1/2}dW_t\rangle)\\
		&\leq 8\tilde{C}+\mathcal{M}_t+\mathcal{R}_t,	
	\end{aligned}
	\end{equation*} where
	\begin{equation*}
	\begin{aligned}
		\mathcal{M}_t&:=8\tilde{C}\int_{0}^{t}H^{-1}(u)\sum_{k\in\mathbb{Z}_*^d,|k|\geq N}\gamma(k)^{-1}|k|^{2(m+1)}(\langle a_k,Q_{k,1}^{1/2}dW_t\rangle+\langle b_k,Q_{k,2}^{1/2}dW_t\rangle)\\
		&\quad-128\tilde{C}\int_{0}^{t}H^{-2}(u)\sum_{k\in\mathbb{Z}_*^d,|k|\geq N}\gamma(k)^{-1}|k|^{4(m+1)}\text{Tr }\mathcal{E}(k)ds
	\end{aligned}
	\end{equation*}
	and 
	\begin{equation*}
	\begin{aligned}
		\mathcal{R}_t&:=4\tilde{C}\int_{0}^{t}H^{-1}(u)\sum_{k\in\mathbb{Z}_*^d,|k|\geq N}|k|^{2(m+1)}\text{Tr }\mathcal{E}(k)ds\\
        &\quad+128\tilde{C}\int_{0}^{t}H^{-2}(u)\sum_{k\in\mathbb{Z}_*^d,|k|\geq N}\gamma(k)^{-1}|k|^{4(m+1)}\text{Tr }\mathcal{E}(k)ds\\
		&\leq 128\tilde{C}t\sum_{k\in\mathbb{Z}_*^d,|k|\geq N}(|k|^{2(m+1)}\text{Tr }\mathcal{E}(k)+\gamma(k)^{-1}|k|^{4(m+1)}\text{Tr }\mathcal{E}(k)).
	\end{aligned}
	\end{equation*}
	Now we choose sufficiently large $N_0>M$ such that for $N\geq N_0,$
	\begin{equation*}
		\mathcal{R}_t\leq \gamma_*t.
	\end{equation*}
	Define stopping times
	\begin{equation*}
		\sigma_n:=\inf\{t\geq 0:|\mathbf{u}_t^x|>n\},
	\end{equation*}
	then $\exp\{2M_{t\wedge\sigma_n}\}_{t\geq 0}$ is a martingales for each $n\geq 1$. Consequently, we obtain
	\begin{equation*}
	\begin{aligned}
		\mathbb{E}\exp\{8\tilde{C}\int_{0}^{t\wedge\sigma_n}h(\|Q_N\mathbf{u}_s^x\|)ds\}&\leq e^{8\tilde{C}}\mathbb{E}e^{\mathcal{M}_{t\wedge\sigma_n}+\mathcal{R}_{t\wedge\sigma_n}}\\
		&\leq e^{8\tilde{C}}\mathbb{E}e^{\mathcal{M}_{t\wedge\sigma_n}+\mathcal{R}_{t\wedge\sigma_n}}\\
		&\leq e^{8\tilde{C}}(\mathbb{E}e^{2\mathcal{M}_{t\wedge\sigma_n}})^{1/2}(\mathbb{E}e^{2\mathcal{R}_{t\wedge\sigma_n}})^{1/2}\\
		&\leq e^{\gamma_*t+8\tilde{C}},
	\end{aligned}
	\end{equation*}
	which completes the proof of (1).
	\par 
	(2) We can write (\ref{Equation L7}) in the form (\ref{Equation L0}) with $x$ changed to $y$, and with $W_t$  replaced by
	\begin{equation*}
	\tilde{W}_t:=W_t+\int_{0}^{t}\beta_sds,\quad\beta_t:= Q^{-1/2}(\tilde{\mathbf{u}}^y_t)P_N(\lambda(\mathbf{u}_t^x-\tilde{\mathbf{u}}_t^y)+B(\mathbf{u}_t^x,\mathbf{u}_t^x)-B(\tilde{\mathbf{u}}_t^y,\tilde{\mathbf{u}}_t^y)).
	\end{equation*}
	By $\mathbf{H_3}$, the inverse operator $Q^{-1/2}(\tilde{\mathbf{u}}^y_t)$ is well defined and bounded, thus there exists a constant $C_0>0$ such that for all $t\geq0$
	\begin{equation}\label{Equation L12}
	\begin{aligned}
	|\beta_t|&\leq C_0 (\lambda|P_N\mathbf{v}_t|+|B(\mathbf{u}_t^x,P_N\mathbf{v}_t)|+|B(\mathbf{v}_t,P_N\mathbf{u}_t^x-P_N\mathbf{v}_t)|)\\
	&\leq C_0(\lambda|P_N\mathbf{v}_t|+CN|\mathbf{u}_t^x|\,|P_N\mathbf{v}_t|+CN|P_N\mathbf{u}_t^x|\,|\mathbf{v}_t|+CN|\mathbf{v}_t|^2)\\
	&\leq C_0(\lambda|P_N\mathbf{v}_t|+2CN|\mathbf{u}_t^x|\,|\mathbf{v}_t|+CN|\mathbf{v}_t|^2).
	\end{aligned}
	\end{equation}

	Further, recall that for any $t>0$ the strong solution to equation (\ref{Equation L0}) with the initial value y, $\mathbf{u}^y_t$, is an image of the driving noise under some measurable mapping
	
	\begin{equation*}
	\Phi_t^y:C({0,t};\mathbb{R}^m)\rightarrow H.
	\end{equation*}
	In other words, $\mathbf{u}^y_t= \Phi_t^y (W_{[0,t]}),$ where $W_{[0,t]}:=\{W_s:s\in[0,t]\},\;t \geq 0.$ It follows from the Girsanov theorem that $\text{Law}(\widetilde{W}_{[0,t]})$ is absolutely continuous with respect to $\text{Law}(W_{[0,t]}).$ Therefore, by the uniqueness of the solution, we have $\tilde{\mathbf{u}}^y_t=\Phi_t^y (\widetilde{W}_{[0,t]}).$\par 
	Thus, by \cite[Theorem A.4]{BKS2020}, plugging into  (\ref{Equation L12}), we  derive that there exists $C$ such that for any $t\geq 0$
	\begin{equation*}
	\begin{aligned}
	d_{TV}(P_t(y,\cdot),\text{Law}(\tilde{\mathbf{u}}^y_t))&=d_{TV}(\text{Law}({\mathbf{u}}^y_t),\text{Law}(\tilde{\mathbf{u}}^y_t))\\
	&=d_{TV}(\text{Law}( \Phi_t^y (W_{[0,t]}),\text{Law}(\Phi_t^y (\widetilde{W}_{[0,t]})))\\
	&\leq d_{TV}(\text{Law}(W_{[0,t]}),\text{Law}(\widetilde{W}_{[0,t]}))\\
	&\leq C(\mathbb{E}(\int_{0}^{t}|\beta_s|^2ds)^{1/2}\\
	&\leq C(\sup\limits_{t\geq 0} \mathbb{E}|\mathbf{u}_t^x|^2+1)\int_{0}^{\infty}(\mathbb{E}|\mathbf{v}_t|^4)^{1/2}|dt.
	\end{aligned}
	\end{equation*}
	Applying Lemma \ref{Lemma L1} and (), we obtain that
	\begin{equation*}
		d_{TV}(P_t(y,\cdot),\text{Law}(\tilde{\mathbf{u}}^y_t))\leq C_2|x-y|^2,
	\end{equation*}
	where $C_2$ is a positive constant depending on $Q,\gamma_*,\mathcal{E}$, independent of $t$.
	\end{proof}

     \subsection{Proof of Lemma  \ref{HNS Lemma1}}

	\begin{proof}[Proof.] 
		(1) By \cite{KS2018}, formula (6.18), there exists $C_2>0$ depending only on $\nu$ and $D$ such that
		\begin{equation*}
		\limsup\limits_{t\rightarrow\infty}\frac{1}{t}\int_{0}^{t}(\|u\|^2+\|\partial_{z_2}u\|^2)ds\leq C_2(\|\sigma\|^2+\|\partial_{z_2}\sigma\|^2). 
		\end{equation*}
		Hence by (\ref{Equation HNS3}), $v(t)$ converges to zero almost surely, if $C\geq C_1C_2/\nu+L/\nu$.\par 
		
		(2) By the estimates in \cite{BKS2020}, one has
		\begin{equation*}
		d(|u|^2+|\partial_{z_2}u|^2)+2\nu(\|u\|^2+\|\partial_{z_2}u\|^2)dt=(\|\sigma\|^2+\|\partial_{z_2}\sigma\|^2)dt+M(t),
		\end{equation*}
		where $M$ is a continuous local martingale such that
		\begin{equation*}
		d\langle M\rangle_t\leq C_3(\|u\|^2+\|\partial_{z_2}u\|^2)dt,
		\end{equation*}
		for some $C_3>0$. \par 
		Using the same argument as in Lemma \ref{NS Lemma1}, let $\kappa>0$ and $	\Xi_\kappa:=\sup_{t\geq 0}(M_t-\kappa\langle M\rangle_t)$.	Then it follows 
		
		\begin{equation}\label{Equation HNS1}
		(2\nu-\kappa C_3)\int_{0}^{t}(\|u\|^2+\|\partial_{z_2}u\|^2)ds\leq (|x|^2+|\partial_{z_2}x|^2)+(\|\sigma\|^2+\|\partial_{z_2}\sigma\|^2)t+\Xi_{\kappa},
		\end{equation}
		and we fix $\kappa=\nu/ C_3$ for simplicity.\par

		Noting that the pseudo-inverse operator $\sigma^{-1}:H_N\rightarrow \mathbb{R}^m$ is well-defined and bounded, we can again employ the same proof as in Lemma \ref{NS Lemma1}, and deduce that for $\delta\in (0,1)$, there exists $C_\delta$ such that 
		\begin{equation*}
		d_{TV}(P_t(y,\cdot),\text{Law}(\tilde{u}(t)))\leq  C_\delta(\mathbb{E}(\int_{0}^{t}|v|^2ds)^\delta)^{1/(1+\delta)}.
		\end{equation*}

		It suffices to control $|v|^2$. For brevity, for $p>1$, denote 
		\begin{equation*}
		h_p(t)=(\nu\lambda_N-\frac{p+1}{2}L)t-C_1\int_{0}^{t}(\|u\|^2+\|\partial_{z_2}u\|^2)ds.
		\end{equation*}\par 
		Fix $\epsilon>0,$ by H\"{o}lder ineqaulity
		\begin{equation}\label{Equation HNS2}
		\begin{aligned}
		\mathbb{E}(\int_{0}^{t}|\mathbf{v}|^2ds)^\delta&=\mathbb{E}(\int_{0}^{t}|\mathbf{v}|^2\exp(h_p-\epsilon s)\exp(-h_p+\epsilon s)ds)^{\delta}\\
		&\leq \mathbb{E}[(\int_{0}^{t}|\mathbf{v}|^{2p}\exp[p(h_p-\epsilon s)]ds)^{\delta/p} (\int_{0}^{t}\exp[q(-h_p+\epsilon s)]ds)^{\delta/q}]\\
		&\leq (\int_{0}^{t}\mathbb{E}|\mathbf{v}|^{2p}\exp[p(h_p-\epsilon s)]ds)^{\delta/p}(\mathbb{E}(\int_{0}^{t}\exp[q(-h_p+\epsilon s)]ds)^{\frac{(p-1)\delta}{p-\delta}})^{\frac{p-\delta}{p}},
		\end{aligned}
		\end{equation}
		where $p>1,1/p+1/q=1.$	For $p>1,$ we use It\^o lemma again to obtain
		\begin{equation*}
		d|v|^{2p}=p|v|^{2p-2}d|v|^2+\frac{p(p-1)}{2}|\langle\sigma(u)-\sigma(\tilde{u}),v\rangle|^2dt,
		\end{equation*}
		hence  as in (\ref{Equation NS9.2}), we obtain that
		\begin{equation*}
		\int_{0}^{t}\mathbb{E}|v|^{2p}\exp[p(h_p-\epsilon s)]ds\leq (p\epsilon)^{-1}|x-y|^{2p},\quad\forall t\geq 0.
		\end{equation*}\par 
	
		To control the second term in (\ref{Equation HNS2}), we need some estimates of $\int_{0}^{t}(\|u\|^2+\|\partial_{z_2}u\|^2)ds.$
		By (\ref{Equation HNS1})
		\begin{equation*}
		-h_p(t)\leq \frac{ C_1}{\nu}(|x|^2+|\partial_{z_2}x|^2)+(-\nu\lambda_N+\frac{p+1}{2}L+\frac{ C_1}{\nu}B_0)t+\frac{ C_1}{\nu}\Xi_\kappa.
		\end{equation*}
		Denoted by $K:=\nu\lambda_N-L+\frac{ C_1}{\nu}B_0>0,\;k=\frac{C_1}{\nu}$, fixing $p=1+\frac{K}{L},\;\epsilon=\frac{K}{4}$, we have
		\begin{equation}\label{Equation HNS4}
		\begin{aligned}
		\mathbb{E}(\int_{0}^{t}\exp[q(-h_p(s)+\epsilon s)]ds)^{\frac{(p-1)\delta}{p-\delta}}&\leq \exp(\frac{kp\delta}{p-\delta}(|x|^2+|\partial_{z_2}x|^2))(\int_{0}^{t}e^{-\frac{(Kp\delta }{4(p-\delta)}s}ds)\\
             &\quad\cdot\mathbb{E}\exp(\frac{kp\delta}{p-\delta}\Xi_\kappa)\\
		&\leq \frac{Kp\delta }{4(p-\delta)}\exp(\frac{kp\delta}{p-\delta}(|x|^2+|\partial_{z_2}x|^2))	\mathbb{E}\exp(\frac{kp\delta}{p-\delta}\Xi_\kappa).
		\end{aligned}
		\end{equation}\par 
		Now we can choose $\delta$ such that $\frac{kp\delta}{p-\delta}=\kappa,$ then by (\ref{Equation HNS1}) and (\ref{Equation HNS4}),  we have
		\begin{equation*}
		\mathbb{E}(\int_{0}^{t}\exp[q(-h_p+\epsilon s)]ds)^{\frac{(p-1)\delta}{p-\delta}}\leq \frac{K\nu }{8kB_0}\exp(\frac{\nu}{4B_0}(|x|^2+|\partial_{z_2}x|^2)).
		\end{equation*} \par 
		Combining these results, we have
		\begin{equation*}
		\begin{aligned}
		d_{TV}(P_t(y,\cdot),\text{Law}(\tilde{u}_t))&\leq  C_\delta(\nu\lambda_N C_0)^{\frac{2\delta}{1+\delta}}(\mathbb{E}(\int_{0}^{t}|\mathbf{v}_s|^2ds)^\delta)^{1/(1+\delta)}\\
		&\leq C(\int_{0}^{t}\mathbb{E}|\mathbf{v}|^{2p}\exp[p(h_p-\epsilon s)]ds)^{\frac{\delta}{p(1+\delta)}}\\
            &\quad \cdot (\mathbb{E}(\int_{0}^{t}\exp[p(-h_p+\epsilon s)]ds)^{\frac{(p-1)\delta}{p-\delta}})^{\frac{p-\delta}{p(1+\delta)}}\\
		&\leq C|x-y|^{\frac{2\delta}{1+\delta}}\exp(\frac{\nu}{4B_0}(|x|^2+|\partial_{z_2}x|^2)),
		\end{aligned}
		\end{equation*}
		where $C>0,\;\delta\in(0,1),\;p>1$, are constants independent of $t$.
	\end{proof}

     \subsection{Proof of Proposition  \ref{HNS Prop UI}}

	\begin{proof}[Proof.] 
	We use the same arguments as in Proposition \ref{NS Prop UI}. Fix $x\in B(0,R)$ and $\epsilon\in(0,1)$, let $w$ be the solution of deterministic equation
	\begin{equation*}
	\left\{	\begin{aligned}
		&dw+(w\partial_xw+w\partial_zu+\partial_xp-\nu\Delta u)dt=0,\\
		&w_0=x+\frac{\epsilon}{2}e_1,
		\end{aligned}\right.
	\end{equation*}
	where $e_1$ is the eigenvector corresponding to eigenvalue $\lambda_1$.
	It is easy to show that 
	\begin{equation*}
	\int_0^t\|w\|^2ds\leq |x+\frac{\epsilon}{2}|^2\leq (R+1)^2 
	\end{equation*}
	Hence, we choose $T=T(R)>0$ sufficiently large such that $\|w_t\|<\epsilon/2$. \par 
	Define 
	\begin{equation*}
		D(t):=\|u_t-w_t\|^2-(\frac{\epsilon}{2})^2,
	\end{equation*}
	Then by similar estimasts as in \cite[Section 3.2]{NJG2017}, there exists constant $C>0$ such that
	\begin{equation*}
		dD(t)\leq-\nu\|u-w\|_{H^2}^2+C(\|u-w\|^4+\|u-w\|^3\|u-w\|_{H^2})dt+\|\sigma(u_t)\|^2dt+2\langle u-w, \sigma(u)\rangle_{H^1} dW,
	\end{equation*}
	and  $D(0)=0$. Define 
	\begin{equation*}
		\tau:=\inf\{t\geq 0:|D(t)|>(\frac{\epsilon}{4})^2\}.
	\end{equation*}
	For $t\in [0,\tau)$,
	\begin{equation*}
		\frac{3\epsilon^2}{16}<\|u_t-w_t\|^2< \frac{5\epsilon^2}{16}.
	\end{equation*}
    We can let $\epsilon$ small enough such that $C\|u-w\|^3<\nu$, hence  we obtain that for $t\in[0,\tau)$,
	\begin{equation*}
	    dD(t)\leq C\|u-w\|^4dt+B_0dt+2\langle u-w,\sigma(u)\rangle_{H^1}dW.
	\end{equation*}
	For the stochastic part, letting $M_t:=\int_{0}^{t}2\langle u_s-w_s, \sigma(u_s)\rangle_{H^1} dW_s$  be the local martingale part, we infer that
	\begin{equation*}
		\frac{d}{dt}\langle M\rangle_t=4\|\langle u_t-w_t,\sigma(u_t)\rangle_{H^1}\|^2\leq 4 B_0\|u_t-w_t\|^2,
	\end{equation*}
	 On ther other hand, by $\mathbf{H_2'}$, the inverse operator $\sigma^{-1}(u_t)$ is well defined and bounded, thus there exists a constant $C_0>0$ such that  $\|\sigma^{-1}(u_t)(y)\|\leq C_0\|y\|$ for $y\in V$. Consequently,
	\begin{equation*}
	\begin{aligned}
		\frac{d}{dt}\langle M\rangle_t&=4\|\langle u_t-w_t,\sigma(u_t)\rangle\|^2\geq 4 C_0\|u_t-w_t\|^2>\frac{3}{4}C_0\epsilon^2>0.
	\end{aligned}
	\end{equation*}\par 	
	Let now $\widetilde{W}$ be a Wiener	process that is independent of $W$ and define
	\begin{equation*}
		Y(t):=D(t\wedge \tau)+(\widetilde{W}_t-\widetilde{W}_\tau)\mathbf{1}_{\{t\geq\tau\}}.
	\end{equation*}
	Then $Y(t)$ is a semimartingale with $Y(0)=0$, which fulfills the conditions of \cite[Lemma I.8.3]{B1998} . Therefore, there exists $p>0$ such that for all $x\in B(0,R)$,
	\begin{equation*}
		\mathbb{P}(\sup\limits_{0\leq t\leq T}|Y(t)|\leq (\frac{\epsilon}{4})^2)\geq p.
	\end{equation*}\par 
	Then
	\begin{equation*}
		P_T(x,B(0,\epsilon))=\mathbb{P}(\|u_T\|<\epsilon)\geq \mathbb{P}(\tau> T)\geq 	\mathbb{P}(\sup\limits_{0\leq t\leq T}|Y(t)|\leq (\frac{\epsilon}{4})^2)\geq p
	\end{equation*}
	for all $x\in B(0,R),$ completing the proof.
	\end{proof}

	\subsection{Proof of Lemma  \ref{EV Lemma1}}

	\begin{proof}[Proof.] 
		(1) By \cite{KS2018}, formula (6.25), we obtain
		\begin{equation*}
		\limsup\limits_{t\rightarrow\infty}\frac{\nu}{t}\int_{0}^{t}\|\xi\|_{H^{-\gamma/2}}^2ds\leq B_0,
		\end{equation*}
		which implies (1)  if $C\geq C_1$, by (\ref{Equation HNS3}).
		
		(2) By the estimates in \cite{BKS2020}, one has
		\begin{equation*}
		d\|\xi\|_{H^{-\gamma/2}}^2+2\nu\|\xi\|_{H^{-\gamma/2}}^2dt=\|\rho(\mathbf{u})\|_{H^{-\gamma/2}}^2dt+M(t),
		\end{equation*}
		where $M$ is a continuous local martingale such that
		\begin{equation*}
		d\langle M\rangle_t\leq 4B_0\|\xi\|_{H^{-\gamma/2}}^2dt.
		\end{equation*} \par 
		Using the same argument as in Lemma \ref{NS Lemma1}, let $\kappa>0$ and $	\Xi_\kappa:=\sup_{t\geq 0}(M(t)-\kappa\langle M\rangle_t)$.	Then it follows 
		
		\begin{equation*}
		(2\nu-4\kappa B_0)\int_{0}^{t}\|\xi\|_{H^{-\gamma/2}}^2ds\leq \|\xi_0\|_{H^{-\gamma/2}}^2+\|\rho\|_{H^{-\gamma/2}}^2t+\Xi_{\kappa},
		\end{equation*}
		and we fix $\kappa=\nu/ (4 B_0)$ for simplicity.\par

		 We can write (\ref{Equation EV0}) in the form (\ref{Equation EV}) with $x$ changed to $y$, and with $W_t$  replaced by
		\begin{equation*}
		\widetilde{W}_t:=W_t+\int_{0}^{t}\beta_sds,\quad\text{with}\quad\beta_t:=\frac{1}{2}\nu\lambda_N^{\gamma/2-1/3}\sigma^{-1}P_N(\mathbf{u}_t-\tilde{\mathbf{u}}_t).
		\end{equation*}
		By our assumptions, the pseudo-inverse operator $\sigma^{-1}$ is well defined and bounded; thus there exists a constant $C_0>0$ such that for all $t\geq0$
		\begin{equation*}
		|\beta_t|\leq \nu\lambda_N^{\gamma/2-1/3}C_0 \|P_N\mathbf{v}_t\|_{L^2}.
		\end{equation*}
		By the Poincar\'{e} inequality, for any $t\geq 0$,
		\begin{equation*}
			\|P_N\mathbf{v}_t\|_{L^2}^2\leq \lambda_N^{\gamma/2}\|P_N\mathbf{v}_t\|_{H^{-\gamma/2}}^2\leq \lambda_N^{\gamma/2}\|\mathbf{v}_t\|_{H^{-\gamma/2}}^2.
		\end{equation*}\par 
		
		Hence we can again employ the same proof as in Lemma \ref{NS Lemma1}, and deduce that for $\delta\in (0,1)$, there exists $C_\delta$ such that 
		\begin{equation*}
		d_{TV}(P_t(y,\cdot),\text{Law}(\tilde{\mathbf{u}}(t)))\leq  C_\delta(\mathbb{E}(\int_{0}^{t}\|\mathbf{v}\|_{H^{-\gamma/2}}^2ds)^\delta)^{1/(1+\delta)}.
		\end{equation*}
		It suffices to control $\|\mathbf{v}\|_{H^{-\gamma/2}}^2$. Assume that $c:=C_1/C\in (0,1)$. It follows
		\begin{equation*}
		\begin{aligned}
		&\mathbb{E}(\int_{0}^{t}\|\mathbf{v}\|_{H^{-\gamma/2}}^2ds)^\delta\\
            &\leq \|x-y\|_{H^{-\gamma/2}}^{2\delta} \mathbb{E}(\int_{0}^{t}\exp(-\nu  s+C_1 \nu^{-1}\lambda_N^{-\gamma/2+1/3}\int_{0}^{t}\|\xi\|_{H^{-\gamma/2}}^2ds)^\delta\\
		&\leq \|x-y\|_{H^{-\gamma/2}}^{2\delta}\mathbb{E}(\int_{0}^{t}\exp(-\nu  s+c\nu\|\rho\|_{H^{-\gamma/2}}^{-2}(\|x\|_{H^{1-\gamma/2}}^2+\|\rho\|_{H^{-\gamma/2}}^2s+\Xi_{\kappa}))ds)^\delta\\
		&\leq \|x-y\|_{H^{-\gamma/2}}^{2\delta}\exp(\delta c\nu\|\rho\|_{H^{-\gamma/2}}^{-2}\|x\|_{H^{1-\gamma/2}}^2)(\nu(1-c))^{-\delta} \mathbb{E}\exp(\delta c\nu\|\rho\|_{H^{-\gamma/2}}^{-2}\Xi_\kappa).
		\end{aligned}
		\end{equation*}\par 		
		
		Therefore, taking $\delta\in (0,1)$ satisfying $\delta c\nu\|\rho\|_{H^{-\gamma/2}}^{-2}\leq \kappa$, and combining these results, we derive that
		\begin{equation*}
		\begin{aligned}
		d_{TV}(P_t(y,\cdot),\text{Law}(\tilde{u}(t)))&\leq  C_\delta(E(\int_{0}^{t}|v|^2ds)^\delta)^{1/(1+\delta)}\\
		&\leq \widetilde{C} \|x-y\|_{H^{-\gamma/2}}^{\frac{2\delta}{1+\delta}}\exp(\delta(1+\delta)^{-1} c\nu\|\rho\|_{H^{-\gamma/2}}^{-2}\|x\|_{H^{1-\gamma/2}}^2),
		\end{aligned}
		\end{equation*}
		where $\widetilde{C}>0$ is a constant independent of $t$.
	\end{proof}

	\subsection{Proof of Proposition \ref{EV Prop UI}}

	\begin{proof}[Proof.] 
	Fix $x\in B(0,R)$ and $\epsilon\in(0,1)$, let $\mathbf{w}$ be the solution of deterministic equation
	\begin{equation*}
	\left\{	\begin{aligned}
		&d\mathbf{w}+(\nu\mathbf{w}+\mathbf{w}_\gamma\cdot\nabla\mathbf{w}_\gamma dt+\nabla p )dt=0,\\
		&\mathbf{w}_0=x+\frac{\epsilon}{2}e_1,\quad \nabla\cdot\mathbf{w}=0,\quad\Lambda^\gamma\mathbf{w}_\gamma=\mathbf{w},
		\end{aligned}\right.
	\end{equation*}
	where $e_1$ is the  first eigenvector corresponding to $\Lambda^\gamma$.
	It is easy to show that 
	\begin{equation*}
		\|\mathbf{w}_t\|_{H^{-\gamma/2}}^2\leq e^{-2\nu t}|x+\frac{\epsilon}{2}e|^2 \leq e^{-2\nu t}(R+1)^2.
	\end{equation*}
	Hence, we choose $T=T(R)>0$ sufficiently large such that $\|\mathbf{w}_t\|_{H^{-\gamma/2}}^2<\epsilon/2$. 
	Define 
	\begin{equation*}
		D(t):=\|\mathbf{u}_t-\mathbf{w}_t\|_{H^{-\gamma/2}}^2-(\frac{\epsilon}{2})^2,
	\end{equation*}
	Then 
	\begin{equation*}
		dD(t)=-2\nu\|\mathbf{u}-\mathbf{w}\|_{H^{-\gamma/2}}^2dt+\|\sigma(\mathbf{u})\|_{H^{-\gamma/2}}^2dt+2\langle \mathbf{u}_t-\mathbf{w}_t, \sigma(\mathbf{u}_t)\rangle_{H^{-\gamma/2}}dW_t,
	\end{equation*}
	where $D(0)=0$. Define 
	\begin{equation*}
		\tau:=\inf\{t\geq 0:|D(t)|>(\frac{\epsilon}{4})^2\}.
	\end{equation*}
	For $t\in [0,\tau)$,
	\begin{equation*}
		\frac{3\epsilon^2}{16}<\|\mathbf{u}-\mathbf{w}\|_{H^{-\gamma/2}}^2< \frac{5\epsilon^2}{16}.
	\end{equation*}
	Hence we obtain that for $t\in[0,\tau)$,
	\begin{equation*}
	    -2\nu\|\mathbf{u}-\mathbf{w}\|_{H^{-\gamma/2}}^2dt+\|\sigma(\mathbf{u})\|_{H^{-\gamma/2}}^2dt\leq B_0 dt.
	\end{equation*}
	For the stochastic part, letting $M_t:=\int_{0}^{t}2\langle \mathbf{u}_s-\mathbf{w}_s, \sigma(\mathbf{u}_s)\rangle_{H^{-\gamma/2}}dW_s\rangle $  be the local martingale part, we infer that
	\begin{equation*}
		\frac{d}{dt}\langle M\rangle_t=4\langle \mathbf{u}_s-\mathbf{w}_s, \sigma(\mathbf{u}_s)\rangle_{H^{-\gamma/2}}^2\leq 4 B_0\|\mathbf{u}-\mathbf{w}\|_{H^{-\gamma/2}}^2,
	\end{equation*}
	 On ther other hand, by $\mathbf{EV_2'}$, the inverse operator $\sigma^{-1}(\mathbf{u}_t)$ is well defined and bounded, thus there exists a constant $C_0>0$ such that  $\|\sigma^{-1}(\mathbf{u}_t)(y)\|_{H^{-\gamma/2}}\leq C_0\|y\|_{H^{-\gamma/2}}$ for $y\in H^{1-\gamma/2}$. Consequently,
	\begin{equation*}
	\begin{aligned}
		\frac{d}{dt}\langle M\rangle_t&=4\langle \mathbf{u}_t-\mathbf{w}_t, \sigma(\mathbf{u}_t)\rangle_{H^{-\gamma/2}}^2\geq 4 C_0\|\mathbf{u}_t-\mathbf{w}_t\|_{H^{-\gamma/2}}^2>\frac{3}{4}C_0\epsilon^2>0.
	\end{aligned}
	\end{equation*}\par 	
	Let now $\widetilde{W}$ be a Wiener	process that is independent of $W$ and define
	\begin{equation*}
		Y(t):=D(t\wedge \tau)+(\widetilde{W}_t-\widetilde{W}_\tau)\mathbf{1}_{\{t\geq\tau\}}.
	\end{equation*}
	Then $Y(t)$ is a semimartingale with $Y(0)=0$, which fulfills the conditions of \cite[Lemma I.8.3]{B1998} . Therefore, there exists $p>0$ such that for all $x\in B(0,R)$,
	\begin{equation*}
		\mathbb{P}(\sup\limits_{0\leq t\leq T}|Y(t)|\leq (\frac{\epsilon}{4})^2)\geq p.
	\end{equation*}\par 
	Then
	\begin{equation*}
		P_T(x,B_d(0,\epsilon))=\mathbb{P}(\|\mathbf{u}_T\|_{H^{-\gamma/2}}<\epsilon)\geq \mathbb{P}(\tau> T)\geq 	\mathbb{P}(\sup\limits_{0\leq t\leq T}|Y(t)|\leq (\frac{\epsilon}{4})^2)\geq p
	\end{equation*}
	for all $x\in B(0,R),$ completing the proof.
	\end{proof}

    \vspace{0.6em}

    \noindent {\bf Acknowledgments.}  I am grateful to my supervisor, Professor Yong Liu, for drawing my attention to these problems and for many insightful discussions. I would like to thank Professor Fuzhou Gong and Professor Yuan Liu for their valuable comments and suggestions. I am thankful to the thesis defense committees and anonymous reviewers for their kind encouragement and helpful feedback.

    \bibliographystyle{abbrv}
    \bibliography{reference}

\end{document}